\renewcommand{\geq}{\geqslant}
\renewcommand{\leq}{\leqslant}
\numberwithin{equation}{section}
\numberwithin{equation}{subsection}
\newtheorem{proposition}{Proposition}[subsection]
\newtheorem{corollary}[proposition]{Corollary}
\newtheorem{lemma}[proposition]{Lemma}
\newtheorem*{main-theorem}{Main Theorem}
\newtheorem*{theorem*}{Theorem}
\theoremstyle{definition}
\newtheorem{remark}[proposition]{Remark}
\newtheorem{definition}[proposition]{Definition}
\newtheorem*{remark*}{Remark}
\newcommand{\Cal}{\mathcal}
\def\wX {\widetilde{X}}
\def\wY {\widetilde{Y}}
\def\Span {\operatorname{Span}}
\def\Spec {\operatorname{Spec}}
\def\EsSp {\operatorname{Ess\, Spec}}
\def\frX {\mathfrak{X}}
\def\ubar{\overline{u}}
\def\psibar{\overline{\psi}}
\def\phi{\varphi}
\def\tJ {\widetilde{J}}
\def\pa {\partial}
\def\ep {\epsilon}
\def\ZZ{{\mathbb Z}}
\def\NN{{\mathbb N}}
\def\Rr{{\mathbb R}}
\def\reals{{\mathbb R}}
\def\Ci{{\mathcal C}^\infty}
\def\Re{\,\mathrm{Re}\,}
\def\phi{\varphi}
\def\dist{\operatorname{dist}}
\def\be{\begin{eqnarray*}}
\def\ee{\end{eqnarray*}}
\def\ben{\begin{eqnarray}}
\def\een{\end{eqnarray}}
\def\L2R{L_{\text{Rest}}^2}
\def\11{\mathds{1}}
\def\RR{\mathcal{R}}
\def\L2c{L^2_{\text{comp}}}
\def\tR{\tilde{R}}
\def\11{\mathbb{1}}
\def\tE{\widetilde{E}}
\def\Hloc{H_{\text{loc}}}
\def\Lloc{L_{\text{loc}}}
\def\tOmega{\widetilde{\Omega}}
\begin{document}

\title[Bound States]{Nonlinear bound states on weakly
  homogeneous spaces}

\author[H. Christianson]{Hans Christianson}
\email{hans@math.unc.edu}
\address{Department of Mathematics, UNC-Chapel Hill \\ CB\#3250
  Phillips Hall \\ Chapel Hill, NC 27599}

\author[J. Marzuola]{Jeremy Marzuola}
\email{marzuola@math.unc.edu}
\address{Department of Mathematics, UNC-Chapel Hill \\ CB\#3250
  Phillips Hall \\ Chapel Hill, NC 27599}

\author[J. Metcalfe]{Jason Metcalfe}
\email{metcalfe@email.unc.edu}
\address{Department of Mathematics, UNC-Chapel Hill \\ CB\#3250
  Phillips Hall \\ Chapel Hill, NC 27599}

\author[M. Taylor]{Michael Taylor}
\email{met@math.unc.edu}
\address{Department of Mathematics, UNC-Chapel Hill \\ CB\#3250
  Phillips Hall \\ Chapel Hill, NC 27599}

\subjclass[2000]{}
\keywords{}

\begin{abstract}

We prove the existence of ground state solutions for a class of
nonlinear elliptic equations, arising in the production of
standing wave solutions to an associated family of nonlinear Schr{\"o}dinger
equations.  We examine two constrained minimization problems, which give rise
to such solutions.  One yields what we call $F_\lambda$-minimizers, the other
energy minimizers.  We produce such ground state solutions on a class of
Riemannian manifolds called weakly homogeneous spaces, and establish
smoothness, positivity, and decay properties.  We also identify classes of
Riemannian manifolds with no such minimizers, and classes for which
essential uniqueness of positive solutions to the associated elliptic
PDE fails.

\end{abstract}

\maketitle

\section{Introduction}\label{s1}

Let $M$ be a Riemannian manifold of dimension $n \geq 2$
(possibly with boundary) with $\Ci$
bounded geometry.  Eventually we will impose the additional assumption
of weak homogeneity, but for now we work in the generality of bounded
geometry.  We consider the nonlinear Schr\"odinger equation on $M$, and
in particular, we are
interested in studying the existence of nonlinear bound states
(standing waves).

The nonlinear Schr{\"o}dinger equations we consider are given by
\begin{equation}
\label{E:NLS}
\begin{cases}
iv_t + \Delta v + |v|^{p-1} v = 0, \ x \in M  \\
v(0,x) = v_0 (x),
\end{cases}
\end{equation}
where $\Delta$ is the Laplace-Beltrami operator on $M$.
If $\pa M\neq\emptyset$, we might impose the Dirichlet or Neumann
boundary condition at $\pa M$.
A nonlinear bound state is a choice of initial condition $u_\lambda(x)$
such that
\ben
v(t,x) = e^{i \lambda t} u_\lambda(x)
\een
satisfies \eqref{E:NLS} with initial data $v(0,x) = u_\lambda(x)$.
Such a solution is also called a ground state, a standing wave, or a solitary
wave, or, sometimes, a soliton.
Plugging in this ansatz yields the following elliptic equation for
$u_\lambda$:
\begin{equation}
\label{E:SNLS}
-\Delta u_\lambda + \lambda u_\lambda - | u_\lambda|^{p-1} u_\lambda = 0.
\end{equation}
Similarly, seeking a standing wave solution to a nonlinear Klein-Gordon
equation,
\ben
v_{tt}-\Delta v+\sigma^2v-|v|^{p-1}v=0,\quad v(t,x)=e^{i\mu t}u(x),
\een
leads to (\ref{E:SNLS}), with $\lambda=\sigma^2-\mu^2$.

In studying (\ref{E:SNLS}), we will assume
\ben
\lambda  > - \delta_0,
\label{E:lam-del}
\een
given that the spectrum of $-\Delta$ is
contained in a semi-infinite interval
\begin{equation}
\label{E:delta-spec}
\Spec (-\Delta ) \subset [\delta_0, \infty ),
\end{equation}
for some $\delta_0 \geq 0$.

We will analyze two methods of establishing the existence of a solution to
(\ref{E:SNLS}).  One is to mimimize the functional
\begin{equation}
\label{E:F-lambda}
F_\lambda (u) = \| \nabla u \|^2_{L^2} + \lambda \| u \|^2_{L^2}
\end{equation}
subject to the constraint
\begin{equation}
\label{E:J-p}
J_p (u) = \int_M | u |^{p+1} dV = \beta,
\end{equation}
with $\beta\in (0,\infty)$ fixed.
For this, we will require
\begin{equation}
\label{E:p-range}
p\in \Bigl(1,1+\frac{4}{n-2}\Bigr),\quad \text{i.e., }\
p+1 \in \left( 2, \frac{2n}{n-2} \right).
\end{equation}

The other is to minimize the energy
\ben
E(u)=\frac{1}{2}\|\nabla u\|^2_{L^2}-\frac{1}{p+1}\int_M |u|^{p+1}\, dV,
\label{1.0.10}
\een
subject to the constraint on the ``mass''
\ben
Q(u)=\|u\|_{L^2}^2=\beta,
\label{E:Q-beta}
\een
with $\beta\in (0,\infty)$ fixed.  For this, we require the more stringent
condition
\ben
p\in \Bigl(1,1+\frac{4}{n}\Bigr).
\label{E:pp}
\een
The energy functional (\ref{1.0.10}) is conserved for sufficiently
regular solutions to the nonlinear Schr{\"o}dinger equation (\ref{E:NLS}),
which imparts special importance to energy minimizers.

We preview these approaches in more detail.

\subsection{$F_\lambda$ minimizers}

We make the hypotheses (\ref{E:lam-del}) and (\ref{E:p-range}),
and desire to minimize $F_\lambda(u)$ over $u\in H^1(M)$,
subject to the constraint (\ref{E:J-p}).
We take $H^1(M)$ to be the $L^2$-Sobolev space of complex valued functions
on $M$, however, with the structure of a vector space over $\Rr$.
Here and below, if $\pa M\neq\emptyset$ and we impose the Dirichlet
condition, we take $H^1(M)$ to mean $H^1_0(M)$.  In certain cases, such as
\S{\ref{s2.a}}, where the Dirichlet boundary condition is central, we
use the notation $H^1_0(M)$, for emphasis.

If $u, v \in H^1(M)$, we have
\ben
\aligned
\frac{d}{d\tau} F_\lambda ( u + \tau v)\Bigr|_{\tau=0}
&= 2 \Re ( -\Delta u + \lambda u , v ), \\
\frac{d}{d\tau} J_p(u+ \tau v) \Bigr|_{\tau=0}
&=(p+1)\Re \int | u |^{p-1} u \overline{v}\, dV.
\endaligned
\label{1.1.1}
\een
If $u \in H^1(M)$ is a constrained minimizer, then
\ben
\aligned
&v \in H^1(M) \text{ and } \Re \int_M | u|^{p-1} u \overline{v}\, dV = 0 \\
&\implies \Re (-\Delta u + \lambda u, v ) = 0,
\endaligned
\een
so the two elements of $H^{-1}(M)$, $-\Delta u + \lambda u$ and
$| u |^{p-1} u$, are linearly dependent over $\Rr$.
Hence there exists a real constant $K_0$ such that
\begin{equation}
\label{E:stat-K0}
-\Delta u + \lambda u = K_0 | u |^{p-1} u,
\end{equation}
with equality holding in $H^{-1}(M)$.  To determine $K_0$,
we pair each side of this equation with $u$ and use $H^1 - H^{-1}$
duality:
\ben
\| \nabla u \|^2_{L^2} + \lambda \| u \|_{L^2}^2 = K_0 \int_M | u
|^{p+1}\, dV = \beta K_0.
\label{1.1.4}
\een
Hence
\begin{equation}
\label{E:K0}   
K_0 = \frac{1}{\beta} \inf\, \{ F_\lambda (u) : u \in H^1(M), J_p(u) =
\beta \}.
\end{equation}
Given the existence of such an infimizer, it follows that
\ben
K_0>0.
\een
Now if $u$ solves \eqref{E:stat-K0}, then $u_a(x) = a u(x)$ solves
\ben
-\Delta u_a + \lambda u_a = | a |^{-(p-1)} K_0 | u_a |^{p-1} u_a,
\een
so that we can solve
\ben
-\Delta u + \lambda u = K | u |^{p-1} u
\een
for any $K>0$.

\subsection{Energy minimizers}

We make the hypothesis (\ref{E:pp}) on $p$ and desire to minimize
$E(u)$, subject to the constraint (\ref{E:Q-beta}).  If $u,v\in H^1(M)$,
we have
\ben
\aligned
\frac{d}{d\tau}E(u+\tau v)\Bigr|_{\tau=0}
&=\Re (-\Delta u-|u|^{p-1}u,v), \\
\frac{d}{d\tau}Q(u+\tau v)\Bigr|_{\tau=0}
&=2\, \Re (u,v).
\endaligned
\label{1.2.1}
\een
If $u\in H^1(M)$ is a constrained minimizer, then
\ben
\aligned
v\in\ &H^1(M),\quad \Re (u,v)=0 \\
&\Longrightarrow \Re (\Delta u+|u|^{p-1}u,v)=0,
\endaligned
\label{1.2.2}
\een
and it follows that there exists $\lambda\in\Rr$ such that $\Delta u+
|u|^{p-1}u=\lambda u$, or equivalently,
\ben
-\Delta u+\lambda u=|u|^{p-1}u.
\label{1.2.3}
\een

\subsection{Background}

Before describing the structure of the main body of this paper, we recall
some previous work on ground state solutions to (\ref{E:SNLS}), and describe
how we plan to extend the scope.  There is a large literature on such problems,
when $M$ is Euclidean space $\Rr^n$ or a bounded domain in $\Rr^n$, addressing
questions of existence, uniqueness, and stability.  We mention a few of these,
referring to them for further references.

Pohozaev \cite{Poh} studied the
case where $M$ is a bounded domain in $\Rr^n$, for $p$ as in (\ref{E:p-range})
(also allowing for more general nonlinearities), producing
$F_\lambda$-minimizers.  These results extend readily to general compact
Riemannian manifolds with boundary.  Strauss \cite{Str} obtained solutions
on $\Rr^n$, essentially via $F_\lambda$-minimizers.
This work was followed by \cite{BPL}.  This paper
constructs a minimizer of
$\|\nabla u\|^2_{L^2}$, subject to the constraint
\ben
\int_{\Rr^n} G(u(x))\, dx=1,
\label{1.0.12}
\een
where
\ben
G(u)=\int_0^u g(t)\, dt,\quad g(u)=|u|^{p-1}u-\lambda u.
\een
Both \cite{Str} and \cite{BPL} considered more general nonlinearities.
A key device in both papers was a symmetrization technique:
if $u\in H^1(\Rr^n)$,
then replacing $u$ by its decreasing radial rearrangement $u^*$ leaves
$\|u\|_{L^2}$ and $\|u\|_{L^{p+1}}$ unchanged and does not increase
$\|\nabla u\|_{L^2}$.
In \cite{Str} this was demonstrated directly.  As noted in \cite{BPL},
it also follows from \cite{BLL}, via an argument using heat kernel 
monotonicity and a rearrangement inequality.
From this result, it suffices to seek a minimizer within the class
of {\it radial} functions in $H^1(\Rr^n)$, where estimates implying compactness
are available.
The papers \cite{L1}--\cite{L2} introduced the concentration-compactness
method and applied it to a number of problems, including a construction
of $F_\lambda$-minimizers, and also a discussion of energy minimizers.

Another approach was taken in \cite{Wei83}.  There a solution to
(\ref{E:SNLS}) was constructed to maximize the Weinstein functional
\ben
W(u)=\frac{\|u\|^{p+1}_{L^{p+1}}}
{\|u\|_{L^2}^\alpha \|\nabla u\|_{L^2}^\beta},
\label{1.3A.3}
\een
over $u\in H^1(\Rr^n)$, under the hypothesis (\ref{E:p-range}),
where
\ben
\alpha=2-\frac{(n-2)(p-1)}{2},\ \
\beta=\frac{n(p-1)}{2},
\label{1.3A.4}
\een
(Note that $\alpha,\beta>0$ and $\alpha+\beta=p+1$.)
The supremum $W_{\max}$ is the best constant in the Gagliardo-Nirenberg
inequality
\ben
\|u\|_{L^{p+1}}^{p+1}\le W_{\max} \|u\|^\alpha_{L^2}
\|\nabla u\|_{L^2}^\beta,\quad u\in H^1(\Rr^n).
\label{1.3A.5}
\een
See also Appendix B of \cite{Tao-book} for a presentation of this work,
and \S{\ref{sa4x}} of this paper for another demonstration of
the existence of a maximizer.

There are infinitely many solutions to (\ref{E:SNLS})
on $\Rr^n$, but the ground state is (up to a constant factor) positive
everywhere, and there are results on uniqueness (up to isometries) of positive
solutions, culminating in \cite{Kwo}, \cite{Mc}, and \cite{CLJ}.  Work on
orbital stability of solutions to the nonlinear Schr{\"o}dinger equation
(\ref{E:NLS}) with initial data given by such ground states includes \cite{CL},
\cite{Wei85}, \cite{Wei86}, and \cite{GrShSt}.  The hypothesis that the
ground state be energy minimizing plays a major role in these results, and
this motivates our interest in establishing existence of energy minimizers
as well as $F_\lambda$-minimizers.  (Stability issues for $F_\lambda$-minimizers
that are not energy minimizers are different; cf.~\cite{S1}.)

Moving beyond the cases of bounded domains and Euclidean space, the case where
$M$ is hyperbolic space $\Cal{H}^n$, a symmetric space of constant negative
sectional curvature, was treated independently in \cite{ManSan} and by two of
us, in \cite{ChMa-hyp}.  The techniques in these two papers are rather
different.  Following \cite{BPL}, the paper \cite{ChMa-hyp} finds a minimizer
of $\|\nabla u\|^2_{L^2}$, subject to the constraint (\ref{1.0.12}).  It
uses a mixture of a symmetrization technique and a concentration-compactness
argument.  In this case, the fact that $u\mapsto u^*$ does not increase
$\|\nabla u\|_{L^2}$ was established in \cite{ChMa-hyp}, via
heat kernel monotonicity in the setting of
$\Cal{H}^n$ plus the extension of the rearrangement inequality of
\cite{BLL}, to the setting of hyperbolic space, given in \cite{Bek}
(following the extension to the sphere given in \cite{BeTa}).
The paper \cite{ManSan} tackles existence via a minimization process
essentially equivalent to finding an $F_\lambda$-minimizer, and makes use of
the Ekeland principle.  Their paper also has existence results for the case
when $-\lambda$ is at the bottom of the spectrum of $-\Delta$, and also
results on both existence and non-existence for critical $p=(n+2)/(n-2)$.
Furthermore, \cite{ManSan} establishes uniqueness of positive solutions (up to
isometries) in this setting.

In this paper, we work on the following class
of complete Riemannian manifolds $M$, possibly with boundary.
We assume there is a group $G$ of isometries
of $M$ and a number $D>0$ such that for every $x,y\in M$, there exists
$g\in G$ such that $\text{dist}(x,g(y))\le D$.  We call such a manifold $M$
a {\it weakly homogeneous space.}
We give some examples of such spaces.

First, if $M$ is compact (perhaps with boundary), such a condition holds,
even if only the identity map on $M$ is an isometry.  Next, if $M$
is a homogeneous space, that is, $M$ has a transitive group of isometries
(such as $\Rr^n$ or $\Cal{H}^n$, among many other richly studied examples),
such a condition holds.  We mention some examples that are neither
compact nor homogeneous.

$\text{}$ \newline
{\sc Example 1.} Pick $R\in (0,1/2)$ and take
$$
M=\Rr^n\setminus\bigcup\limits_{k\in\ZZ^n} B_R(k).
$$

\noindent
{\sc Example 2.} Let $M^b$ be a compact Riemannian manifold (perhaps with
boundary), and let $M$ be the universal covering space of $M^b$, with the
induced metric tensor.  More generally, $M$ can be any covering space of
$M^b$.  This class of examples includes Example 1 as a special case.
\newline $\text{}$

\subsection{Plan of the paper}

In \S{\ref{s2}}, we prove existence of a minimizer $u\in H^1(M)$ to
$F_\lambda(u)$, subject to the constraint (\ref{E:J-p}), given $p$ as in
(\ref{E:p-range}), and establish some useful properties of such solutions,
when $M$ is a weakly homogeneous space.  The first use of the constraint
on $p$ is to get
\ben
I_\beta=\inf \{F_\lambda(u):u\in H^1(M),\,J_p(u)=\beta\}>0.
\label{1.3.1}
\een
We then apply the concentration-compactness method of P.-L.~Lions,
suitably extended to the manifold setting.  Concentration is established
in \S{\ref{s2.1}}, and from there compactness and existence of
$F_\lambda$-minimizers is proven in \S{\ref{s2.2}}.
The concentration argument works whenever $M$ has bounded geometry.
It is in passing to the compactness argument that we use the weak
homogeneity.  Regarding the necessity of some geometrical constraint,
we give examples in \S{\ref{s2.a}}
of Riemannian manifolds (with boundary) with bounded
geometry, for which there is not such an $F_\lambda$-minimizer.

Section \ref{s2.3} establishes smoothness of minimizers, and \S{\ref{s2.4}}
is devoted to showing that every real valued minimizer $u$ is either $>0$
on $M$ or $<0$ on $M$, and obtaining some Harnack-type estimates.
Further global bounds on positive minimizers are obtained in
\S{\ref{s2.5}}.  In \S\S{\ref{s2.3}--\ref{s2.5}} we assume for simplicity
that $\pa M=\emptyset$.  Some local regularity estimates for nonempty boundary
are given in \cite{Tay11}, in the setting of Lipschitz domains.

In \S{\ref{s3}} we prove existence of a minimizer $u\in H^1(M)$ to the energy
$E(u)$, subject to the constraint (\ref{E:Q-beta}), given $p$ as in
(\ref{E:pp}), when $M$ is a weakly homogeneous space, under one further
condition.  Namely, we require
\ben
\Cal{I}_\beta=\inf \{E(u):u\in H^1(M), Q(u)=\beta\}
\label{1.3.2}
\een
to satisfy
\ben
\Cal{I}_\beta<0.
\label{1.3.3}
\een
(Replace $H^1(M)$ by $H^1_0(M)$ when using the Dirichlet boundary condition.)
Again we use a concentration-compactness argument.  In \S{\ref{s3.1}}
we show that (\ref{1.3.3}) forces concentration, and we discuss existence
of energy minimizers for this in \S{\ref{s3.2}}.  We mention that energy
minimizers are (constant multiples of) positive functions on $M$, and
many of the estimates of \S\S{\ref{s2.3}--\ref{s2.5}} apply here,
as seen in \S{\ref{s3.4}}, following a discussion of manifolds with no
energy minimizers in \S{\ref{s3.3}}.  We give a formula for the second
variation of energy, for energy minimizers, in \S{\ref{s3.5}}
and for $F_\lambda$-minimizers in \S{\ref{s3.a}}
and apply these formulas in \S{\ref{s3.6}} to results on operators
$L_{\pm}$.  These results in turn are applied in \S{\ref{s3.8}} to
some results on orbital stability.

In \S{\ref{s4}} we take another look at the symmetrization method,
mentioned above in the context of $\Rr^n$.
After a discussion of the rearrangement lemma in \S{\ref{s4.1}},
in \S{\ref{sa4}} we pursue a uniform approach to a proof of existence of
$F_\lambda$-minimizers, valid for $M=\Cal{H}^n$ and for $\Rr^n$.  This proof
is adapted from \cite{ChMa-hyp}, but it
incorporates simplifications that allow us to avoid completely the
concentration-compactness argument in this context.
In \S{\ref{sa4x}} we apply the symmetrization approach to maximizing $W(u)$
in (\ref{1.3A.3}).
In \S{\ref{sa5}} we discuss such a symmetrization approach to the existence
of energy minimizers, when $M=\Cal{H}^n$ or $\Rr^n$ ($n\ge 2$).
We see that this approach succeeds up to a point, but there appears a gap,
and it remains to be seen whether the concentration-compactness argument
can be avoided in this setting.

We have four appendices, containing supporting material related to the
results of \S\S{\ref{s2}--\ref{s4}}.  As advertised above, Appendix
\ref{sa1} establishes a version of the Lions concentration-compactness
argument.  We work in a general class of measured metric spaces, which in
particular includes the Riemannian manifolds with bounded geometry arising
in \S\S{\ref{s2}--\ref{s3}}.

In \S{\ref{sa3}} we look at the behavior of the energy of positive solutions
to (\ref{E:SNLS}) on Euclidean space $\Rr^n$.
We show that if $1<p<1+4/n$ then all such solutions
(with $\lambda>0$) have negative energy and
are energy minimizing within their mass class,
while if $1+4/n<p<(n+2)/(n-2)$, there can be no energy minimizer within
its mass class, at any positive mass.
In \S{\ref{sa2}}, we discuss conditions
under which $F_\lambda$-minimizers can be shown to have positive energy
($E(u)>0$), even when $1<p<1+4/n$.
In particular, we show that all such minimizers on hyperbolic
space $\Cal{H}^n$ associated to $\lambda\le 0$ have positive energy.

In \S{\ref{sa6}} we examine some positive solutions to (\ref{E:SNLS})
that are not $F_\lambda$-minimizers, and exhibit some cases of essential
non-uniqueness of such positive solutions.



\section{$F_\lambda$ minimizers}\label{s2}

We take up the task of minimizing $F_\lambda(u)$, subject to
the constraint $J_p(u)=\beta$.
Observe that \eqref{E:lam-del}--\eqref{E:delta-spec} imply
\ben
F_\lambda (u) \simeq \| u \|^2_{H^1(M)}.
\een
The hypothesis (\ref{E:p-range}) on the range of $p$ implies
\ben
H^1 (M) \subset L^{p+1}(M).
\een
Hence there exists a constant $C>0$ such that
\ben
\| u \|_{L^{p+1}}^2 \leq C F_\lambda(u),
\een
so that
\begin{equation}
\label{E:I-beta}
I_\beta =  \inf \{ F_\lambda (u) :u\in H^1(M),\, J_p(u) = \beta \} >0.
\end{equation}
(Recall from \S{\ref{s1}} that $H^1(M)$ stands for $H^1_0(M)$
if $\pa M\neq\emptyset$ and we use the
Dirichlet boundary condition.)

Suppose $\{ u_\nu \} \subset H^1(M)$ is a sequence satisfying
\ben
J_p(u_\nu) = \beta,\quad F_\lambda (u_\nu ) \leq I_\beta + \frac{1}{\nu}.
\label{2.0.5}
\een
Passing to a subsequence if necessary, we have
\ben
u_\nu \to u \in H^1(M)
\een
converging in the weak topology.
Rellich's theorem implies
\begin{equation}
\label{E:Rellich}
H^1 (M) \hookrightarrow L^{p+1}(\Omega)
\end{equation}
is compact provided $\Omega$ is relatively compact.
This implies that, for such $\Omega$,
\ben
u_\nu \to u \text{ in the } L^{p+1}(\Omega) \text{ norm.}
\een
If our manifold $M$ is compact, we can take $M = \Omega$ and the
minimization problem is simple.  We are interested in the
non-compact setting, for which further argument is necessary.

To carry this out, we use the
concentration-compactness method of Lions \cite{L1,L2}.
We set up the concentration-compactness argument in \S{\ref{s2.1}} and
establish concentration.  In \S{\ref{s2.2}} we establish compactness
and prove existence of $F_\lambda$-minimizers,
when $M$ is a weakly homogeneous space.
In \S{\ref{s2.a}} we exhibit some manifolds with bounded geometry on which
there are no $F_\lambda$-minimizers.
In \S\S{\ref{s2.3}--\ref{s2.5}} we study smoothness, positivity, and
decay estimates on the solutions; in these last three sections we assume
for simplicity that $\pa M=\emptyset$.

\subsection{Concentration}\label{s2.1}
\label{S:conc}

In this section we again assume only that $M$ is a smooth manifold with
$\Cal{C}^\infty$ bounded geometry.  The enemy of finding a minimizer is the
possibility of minimizing sequences escaping to spatial infinity.  We
will show that some minimizing subsequence concentrates, which we will
define shortly.  

Let us first record a basic Lemma of concentration compactness.
This is given in  Lions \cite{L1}, pp.~115--117,
in the context of Euclidean space.
Here we state the result in the context of a Riemannian manifold.
In Appendix \ref{sa1}, we establish the result in an even more general
setting.

\begin{lemma}
Let $M$ be a Riemannian manifold.  Fix $\beta\in (0,\infty)$.
Let $\{u_\nu \} \in L^{p+1}(M)$ be a sequence satisfying $\int | u_\nu
|^{p+1} dV = \beta$.  Then, after extracting a subsequence, one of the
following three cases holds:

(i) {\it Vanishing.}  If $B_R(y) = \{ x  \in M : \dist (x,y) \leq R
\}$ is the closed ball, then for all $0 < R < \infty$, 
\begin{equation}
\label{E:vanishing}
\lim_{\nu \to \infty} \sup_{y \in M} \int_{B_R(y)} | u_\nu|^{p+1}\, dV =
0.
\end{equation}

(ii) {\it Concentration.}  There exists a sequence of points $\{y_\nu
\} \subset M$ with the property that for each $\epsilon>0$, there
exists $R(\epsilon) < \infty$ such that
\ben
\int_{B_{R(\epsilon)}(y_\nu) } | u_\nu |^{p+1}\, dV > \beta - \epsilon.
\een

(iii) {\it Splitting.}  There exists $\alpha \in (0, \beta)$ with the
following properties:  For each $\epsilon>0$, there exists $\nu_0 \geq
1$ and sets $E_\nu^\sharp, E_\nu^b \subset M$ such that
\begin{equation}
\label{E:splitting-1}
\dist ( E_\nu^\sharp, E_\nu^b ) \to \infty \text{ as } \nu \to \infty,
\end{equation}
and
\begin{equation}
\label{E:splitting-2}
\left| \int_{E_\nu^\sharp} | u_\nu |^{p+1}\, dV - \alpha \right| <
\epsilon, \,\,\, \left| \int_{E_\nu^b} | u_\nu |^{p+1}\, dV - (\beta-\alpha)
\right| <
\epsilon.
\end{equation}

\end{lemma}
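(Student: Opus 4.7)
The plan is to adapt Lions's original argument via L\'evy concentration functions to the Riemannian setting. For each $\nu$, define the non-decreasing function
\[
Q_\nu(R) = \sup_{y \in M} \int_{B_R(y)} |u_\nu|^{p+1}\, dV,
\]
which takes values in $[0,\beta]$ and tends to $\beta$ as $R \to \infty$ by monotone convergence (using that $M$ is $\sigma$-compact). By Helly's selection principle applied to this uniformly bounded family of monotone functions, one may pass to a subsequence along which $Q_\nu(R) \to Q(R)$ at every continuity point of the non-decreasing limit $Q\colon [0,\infty) \to [0,\beta]$. Set $\alpha = \lim_{R \to \infty} Q(R) \in [0,\beta]$; the three dichotomy cases correspond respectively to $\alpha = 0$, $\alpha = \beta$, and $0 < \alpha < \beta$.

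The vanishing case $\alpha = 0$ is immediate: $Q_\nu(R) \to 0$ for every $R$, which is exactly (\ref{E:vanishing}). For the concentration case $\alpha = \beta$, I fix $\epsilon_k \downarrow 0$ and radii $R_k \uparrow \infty$ with $Q(R_k) > \beta - \epsilon_k/2$, and for each $\nu$ pick $y_\nu^{(k)}$ with $\int_{B_{R_k}(y_\nu^{(k)})} |u_\nu|^{p+1}\, dV > \beta - \epsilon_k$. Since each $u_\nu$ has total $L^{p+1}$-mass exactly $\beta$, any two such concentration balls must intersect once $k$ is large, forcing the $y_\nu^{(k)}$ to lie within a bounded neighborhood of $y_\nu := y_\nu^{(1)}$. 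Enlarging the radii accordingly produces a single sequence $\{y_\nu\}$ and a function $R(\epsilon)$ satisfying (ii).

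The splitting case $0 < \alpha < \beta$ is where the main work lies. Fix $\epsilon > 0$, pick $R_0 = R_0(\epsilon)$ with $Q(R_0) > \alpha - \epsilon/2$, and choose $y_\nu$ realizing $\int_{B_{R_0}(y_\nu)} |u_\nu|^{p+1}\, dV > \alpha - \epsilon$ for $\nu$ large. Since $Q(R) \le \alpha$ for every continuity point, one has $\int_{B_S(y_\nu)} |u_\nu|^{p+1}\, dV \le \alpha + \epsilon$ once $\nu$ is large relative to a fixed $S$. Letting $S = S_\nu \to \infty$ slowly enough, the complement $E_\nu^b = M \setminus B_{S_\nu}(y_\nu)$ satisfies $\int_{E_\nu^b} |u_\nu|^{p+1}\, dV \ge \beta - \alpha - \epsilon$, while $E_\nu^\sharp = B_{R_0}(y_\nu)$ captures the inner piece; since $S_\nu - R_0 \to \infty$, both (\ref{E:splitting-1}) and (\ref{E:splitting-2}) hold, and a final diagonal extraction over $\epsilon_k \downarrow 0$ yields (iii) in its stated form.

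The principal technical obstacle is the coupling of the extractions in the splitting case: selecting radii $S_\nu$ and centers $y_\nu$ so that the inner bound (mass exceeding $\alpha - \epsilon$ in $B_{R_0}(y_\nu)$) and the outer bound (mass less than $\alpha + \epsilon$ in $B_{S_\nu}(y_\nu)$) hold simultaneously along a single subsequence, while $S_\nu \to \infty$. No group action or curvature hypothesis on $M$ enters the argument; only measurability of balls and $\sigma$-compactness of $M$ are used, which is consistent with the author's comment that the concentration step works on any Riemannian manifold, with weak homogeneity reserved for the ensuing compactness step in \S\ref{s2.2}.
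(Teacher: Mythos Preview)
Your proposal is correct and follows essentially the same route as the paper's proof (given in Appendix~\ref{sa1}): define the L\'evy concentration functions $Q_\nu(R)$, extract a limit $Q$ by a monotone-selection/diagonal argument, set $\alpha=\lim_{R\to\infty}Q(R)$, and split into the three cases $\alpha=0$, $\alpha=\beta$, $0<\alpha<\beta$. Your concentration argument (intersecting balls of mass exceeding $\beta/2$ force the centers $y_\nu^{(k)}$ to stay a bounded distance from a fixed $y_\nu$) and your splitting construction ($E_\nu^\sharp=B_{R_0}(y_\nu)$, $E_\nu^b=M\setminus B_{S_\nu}(y_\nu)$ with $S_\nu\to\infty$ slowly) match the paper's choices almost exactly; the paper simply anchors at $y_\nu=y_\nu(1/2)$ and phrases the extraction as diagonalization over $\mathbb{Q}^+$ rather than Helly, which are equivalent devices.
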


We now show that for a minimizing sequence in our problem, the
vanishing phenomenon cannot occur.  The proof uses the following lemma,
essentially given in \cite{L2}.

\begin{lemma}
\label{L:vanishing}
Assume $\{ u_\nu \}$ is bounded in $H^1(M)$, and 
\begin{equation}
\label{E:vanishing-2}
\lim_{\nu \to \infty} \sup_{y \in M} \int_{B_R(y)} | u_\nu|^{2}\, dV =
0, \text{ for some } R>0.
\end{equation}
Then 
\begin{equation}
\label{E:vanishing-3}
2 <  r < \frac{2n}{n-2} \implies \| u_\nu \|_{L^r(M)} \to 0.
\end{equation}
\end{lemma}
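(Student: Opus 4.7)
The plan is to combine a bounded-overlap covering of $M$ by small balls with a local Gagliardo--Nirenberg type interpolation, so that the local $L^2$-smallness from (\ref{E:vanishing-2}) supplies a vanishing factor while the uniform $H^1$-bound controls the rest.

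First, using the bounded geometry of $M$, I would fix a covering $\{B_R(y_j)\}_{j\in\mathcal{J}}$ of $M$ such that each point of $M$ lies in at most $N$ of the balls, with $N$ independent of the cover; a uniform positive lower bound on the injectivity radius makes this standard. On any such ball, Hölder interpolation between $L^2$ and $L^{r_0}$ combined with the (uniform) Sobolev embedding $H^1(B_R(y))\hookrightarrow L^{r_0}(B_R(y))$ gives, for any $r_0\in(r,2n/(n-2)]$,
\begin{equation*}
\|u\|_{L^r(B_R(y_j))}^r \leq C \|u\|_{L^2(B_R(y_j))}^{r(1-\alpha)} \|u\|_{H^1(B_R(y_j))}^{r\alpha},\qquad \frac{1}{r}=\frac{1-\alpha}{2}+\frac{\alpha}{r_0}.
\end{equation*}

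Summing over $j$ is then the crux. The choice of $r_0$ (equivalently of $\alpha$) must be tuned so that, after summing, one factor becomes a positive power of $\sup_y\|u\|_{L^2(B_R(y))}$ while the other is controlled by the global $H^1$-norm. When $r\leq 2+4/n$, pick $r_0\in(r,2n/(n-2)]$ so that $r\alpha=2$; then $r(1-\alpha)=r-2>0$, and bounded overlap yields
\begin{equation*}
\|u\|_{L^r(M)}^r \leq CN \Bigl(\sup_{y\in M}\|u\|_{L^2(B_R(y))}\Bigr)^{r-2}\|u\|_{H^1(M)}^2.
\end{equation*}
When $r>2+4/n$, no such $r_0\leq 2n/(n-2)$ exists; instead take $r_0=2n/(n-2)$, so that $r\alpha=n(r-2)/2>2$ and $r(1-\alpha)>0$. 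Writing $\|u\|_{H^1(B_R(y_j))}^{r\alpha}=\|u\|_{H^1(B_R(y_j))}^{r\alpha-2}\cdot\|u\|_{H^1(B_R(y_j))}^2$ and bounding the first factor by $\|u\|_{H^1(M)}^{r\alpha-2}$, then pulling the $L^2$ factor out as a supremum, one obtains
\begin{equation*}
\|u\|_{L^r(M)}^r \leq CN \|u\|_{H^1(M)}^{r\alpha}\Bigl(\sup_{y\in M}\|u\|_{L^2(B_R(y))}\Bigr)^{r(1-\alpha)}.
\end{equation*}
In either case, applying the estimate to $u=u_\nu$, and using that $\{u_\nu\}$ is bounded in $H^1(M)$ while $\sup_y\|u_\nu\|_{L^2(B_R(y))}\to 0$ by (\ref{E:vanishing-2}), yields $\|u_\nu\|_{L^r(M)}\to 0$.

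The main obstacle is the exponent bookkeeping in the summation step: because $\alpha$ varies with $r$ and $r_0$, no single choice of $r_0$ works uniformly over the whole subcritical range, and this is what forces the argument to split at the mass-critical exponent $r=2+4/n$.
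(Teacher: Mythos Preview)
Your argument is correct and is precisely the proof that the paper has in mind: the paper does not spell out the details but simply cites Lions' Lemma~I.1 and observes that the only two ingredients needed are uniform Sobolev embeddings on balls and a bounded-overlap covering, both of which hold under bounded geometry. You have supplied exactly those details, and your exponent bookkeeping (including the split at $r=2+4/n$) is accurate.
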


\begin{proof}
This is a special case of Lemma I.1 on p. 231 of \cite{L2}.
Actually, the lemma there is established for $M=\Rr^n$.
However, the only two geometric properties
used in the proof in \cite{L2} are the existence of Sobolev embeddings
on balls of radius $R>0$, and the fact that there exists $m < \infty$
such that $\reals^n$ has a covering by balls of radius $R$ in such a
way that each point is contained in at most $m$ balls.  These two
facts hold on every smooth Riemannian manifold with $\Cal{C}^\infty$
bounded geometry.

\end{proof}

\begin{proposition}
\label{P:no-vanishing}
Suppose $\{ u_\nu \}$ is a minimizing sequence.  Then no subsequence
can satisfy \eqref{E:vanishing}.

\end{proposition}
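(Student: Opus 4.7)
The plan is a contradiction argument: assume a subsequence $\{u_\nu\}$ (still called $\{u_\nu\}$) of a minimizing sequence satisfies the vanishing alternative \eqref{E:vanishing}, and derive that $J_p(u_\nu)\to 0$, contradicting $J_p(u_\nu)=\beta>0$. The bridge between these two facts is Lemma~\ref{L:vanishing}, which upgrades $L^2$-smallness on balls to $L^r$-smallness on all of $M$ for $r\in(2,2n/(n-2))$, in particular for $r=p+1$, an interval that is non-empty precisely because of \eqref{E:p-range}.

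To set this up, I would first note that $\{u_\nu\}$ is bounded in $H^1(M)$: since $\lambda>-\delta_0$ and $\Spec(-\Delta)\subset[\delta_0,\infty)$, one has $F_\lambda(u)\simeq\|u\|_{H^1}^2$, so the bound $F_\lambda(u_\nu)\le I_\beta+1/\nu$ supplies the needed $H^1$ bound automatically. The only task left is to verify the hypothesis \eqref{E:vanishing-2} of Lemma~\ref{L:vanishing}, that is, to pass from $L^{p+1}$-vanishing on balls (which is what Lemma 1(i) gives) to $L^2$-vanishing on balls.

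The key step here is a simple Hölder interpolation using bounded geometry. Because $M$ has $\Ci$ bounded geometry, there is a uniform upper bound $V_R:=\sup_{y\in M}\Vol(B_R(y))<\infty$. Since $p+1>2$, Hölder's inequality yields, uniformly in $y\in M$,
\[
\int_{B_R(y)}|u_\nu|^2\,dV\le V_R^{\,1-\frac{2}{p+1}}\Bigl(\int_{B_R(y)}|u_\nu|^{p+1}\,dV\Bigr)^{\!\frac{2}{p+1}}.
\]
Taking $\sup_y$ and letting $\nu\to\infty$, the vanishing hypothesis \eqref{E:vanishing} forces the left-hand side to tend to zero, establishing \eqref{E:vanishing-2}.

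Now Lemma~\ref{L:vanishing} applies with $r=p+1\in(2,2n/(n-2))$, giving $\|u_\nu\|_{L^{p+1}(M)}\to 0$, i.e.\ $J_p(u_\nu)\to 0$. This contradicts the normalization $J_p(u_\nu)=\beta>0$ and completes the proof. The only mildly delicate point is the Hölder bridge together with the uniformity afforded by bounded geometry; everything else is either the definition of the minimizing sequence or a direct invocation of the previous two lemmas.
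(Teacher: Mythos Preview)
Your argument is correct and follows essentially the same approach as the paper's proof: use H\"older's inequality on balls (which have uniformly bounded volume by bounded geometry) to pass from \eqref{E:vanishing} to \eqref{E:vanishing-2}, then invoke Lemma~\ref{L:vanishing} with $r=p+1$ to contradict $J_p(u_\nu)=\beta>0$. Your write-up is simply more explicit about the $H^1$ bound and the uniform volume constant.
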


\begin{proof}

If \eqref{E:vanishing} holds, then so does \eqref{E:vanishing-2}, by
H\"older's inequality on finite measure balls.  Then
\eqref{E:vanishing-3} holds with $r = p+1$ (recall (\ref{E:p-range})).
This contradicts the assumption that $J_p(u) = \beta >0$.
\end{proof}

To show that splitting is impossible, we start by showing that $I_\beta$,
given by (\ref{1.3.1}), has the property that, for all $\beta>0$,
\ben
I_\beta < I_\eta + I_{\beta - \eta}, \text{ for any } \eta \in (0,
\beta).
\label{2.1.7}
\een
Lions gives (\ref{2.1.7}) a key role in results of \cite{L1}--\cite{L2},
showing that, in various situations, splitting cannot occur.  In fact, in this
case we have much more structure.

\begin{proposition}
\label{P:subadd}
For all $\beta>0$, we have
\ben
I_\beta = I_1 \beta^{2/(p+1)}.
\een
\end{proposition}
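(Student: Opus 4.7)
The plan is to exploit the fact that the functionals $F_\lambda$ and $J_p$ are homogeneous of different degrees (degrees $2$ and $p+1$ respectively), so that the constraint sets $\{J_p = \beta\}$ for different values of $\beta$ differ only by a scalar dilation in $H^1(M)$.

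Concretely, I would first observe that for any $u \in H^1(M)$ and any real scalar $c$, one has $F_\lambda(cu) = c^2 F_\lambda(u)$ and $J_p(cu) = |c|^{p+1} J_p(u)$. Given $\beta > 0$ and any $u$ with $J_p(u) = 1$, set $c = \beta^{1/(p+1)}$ and $v = cu$; then $J_p(v) = \beta$ and $F_\lambda(v) = \beta^{2/(p+1)} F_\lambda(u)$. Since $u \mapsto \beta^{1/(p+1)} u$ is a bijection from $\{u \in H^1(M): J_p(u) = 1\}$ onto $\{v \in H^1(M): J_p(v) = \beta\}$, taking the infimum of $F_\lambda$ over both sides gives
\[
I_\beta = \beta^{2/(p+1)} I_1.
\]

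There is essentially no obstacle: the only thing to verify is that dilation by a positive constant preserves membership in $H^1(M)$ (obvious) and is compatible with the Dirichlet boundary condition in the case $\pa M \neq \emptyset$, so that the infimum is taken over the same class of functions before and after scaling. The strict subadditivity (\ref{2.1.7}) then follows as a corollary, since the function $\beta \mapsto \beta^{2/(p+1)}$ is strictly concave on $(0,\infty)$ (because $2/(p+1) < 1$, as $p > 1$) and vanishes at $0$, hence strictly subadditive, yielding $I_\beta < I_\eta + I_{\beta - \eta}$ for $\eta \in (0,\beta)$.
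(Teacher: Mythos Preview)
Your proof is correct and follows essentially the same approach as the paper: both exploit the homogeneity $F_\lambda(cu)=c^2F_\lambda(u)$ and $J_p(cu)=|c|^{p+1}J_p(u)$ to relate $I_\beta$ and $I_1$. Your bijection argument is a slightly cleaner packaging of the paper's two-inequality argument via minimizing sequences, but the content is the same.
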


\begin{proof}
Suppose $u_\nu$ satisfies (\ref{2.0.5}), so
\ben
J_p(u_\nu) = \beta,\quad F_\lambda(u_\nu)\rightarrow I_\beta.
\een
Then for $a>0$, 
\ben
J_p(a u_\nu) = a^{p+1} \beta,\quad F_\lambda(au_\nu)\rightarrow
a^2 I_\beta.
\een
Hence
\ben
\gamma=a^{p+1}\beta\Longrightarrow
I_\gamma \leq a^2 I_\beta.
\label{2.1.11}
\een
Now if we replace $\beta$ by $\gamma$ and $a$ by $a^{-1}$, we get
$I_\beta\leq a^{-2}I_\gamma$, i.e., $I_\gamma\geq a^2 I_\beta$,
which together with (\ref{2.1.11}) implies
\ben
\gamma = a^{p+1} \beta \implies I_\gamma = a^2 I_\beta,
\een
and proves the proposition.
\end{proof}

Note that since $C F_\lambda (u) \geq \int | u |^{p+1} d V$, we have
$I_1 >0$, so $I_\beta >0$ for every $\beta>0$, and (\ref{2.1.7}) follows.

$\text{}$ \newline
{\bf Remark.} The proof of Proposition \ref{P:subadd} implies that
$F_\lambda$-minimizers scale to other $F_\lambda$-minimizers.  We state this
formally.

\begin{corollary}\label{c2.1.R}
If $\beta>0$ and $u$ minimizes $F_\lambda$, subject to the constraint
$J_p(u)=\beta$, and if $\kappa>0$, then $u_\kappa=\kappa u$ minimizes
$F_\lambda$, subject to the constraint $J_p(u_\kappa)=\kappa^{p+1}\beta$.
\end{corollary}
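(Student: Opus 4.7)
The plan is to read off the corollary directly from the computations already carried out in the proof of Proposition \ref{P:subadd}. The key observation is that the scaling $u \mapsto \kappa u$ transforms both the constraint functional and the minimized functional in a controlled homogeneous way, and Proposition \ref{P:subadd} tells us that the infimum $I_\beta$ scales in exactly the matching way, so a minimizer is sent to a minimizer.

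Concretely, I would proceed in three short steps. First, note that $J_p(\kappa u) = \kappa^{p+1} J_p(u) = \kappa^{p+1}\beta$, so $u_\kappa = \kappa u$ lies in the admissible set for the constraint $J_p(u_\kappa) = \kappa^{p+1}\beta$. Second, since both $\|\nabla u\|_{L^2}^2$ and $\|u\|_{L^2}^2$ are homogeneous of degree $2$, we have
\[
F_\lambda(u_\kappa) = \kappa^2 F_\lambda(u) = \kappa^2 I_\beta,
\]
using that $u$ is a minimizer. Third, by Proposition \ref{P:subadd},
\[
I_{\kappa^{p+1}\beta} = I_1 (\kappa^{p+1}\beta)^{2/(p+1)} = \kappa^2 \, I_1\beta^{2/(p+1)} = \kappa^2 I_\beta.
\]
Combining the last two displays gives $F_\lambda(u_\kappa) = I_{\kappa^{p+1}\beta}$, which is exactly the statement that $u_\kappa$ attains the constrained infimum.

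There is no real obstacle here: the content of the corollary is contained in the two-sided inequality established inside the proof of Proposition \ref{P:subadd}, where the substitution $a \leftrightarrow a^{-1}$ showed that the inequality $I_\gamma \le a^2 I_\beta$ is in fact an equality. The corollary is simply the statement that equality in the scaling of infima forces a minimizer on one side to map to a minimizer on the other. The only thing to be careful about is that no nontriviality issue arises — and this is guaranteed because $\kappa>0$ and $\beta>0$, so the image constraint value $\kappa^{p+1}\beta$ is strictly positive and the admissible set is nonempty.
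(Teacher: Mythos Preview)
Your proposal is correct and matches the paper's approach: the paper does not give a separate proof, but simply remarks that the proof of Proposition \ref{P:subadd} implies $F_\lambda$-minimizers scale to other $F_\lambda$-minimizers, and your three steps make this explicit in exactly the intended way.
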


Let us now show that splitting cannot occur for a minimizing
subsequence.  Suppose on the contrary that there exists $\alpha \in
(0, \beta)$ and for each $\epsilon>0$, sets $E_\nu^\sharp, E_\nu^b
\subset M$ such that \eqref{E:splitting-1}-\eqref{E:splitting-2}
occur.  Choose $\epsilon>0$ sufficiently small that 
\begin{equation}
\label{E:C-1-eps}
I_\beta < I_\alpha + I_{\beta - \alpha} - C_1 \epsilon,
\end{equation}
where $C_1 >0$ is a sufficiently large constant to be fixed later.
Since $\| u_\nu \|_{H^1(M)}$ is uniformly bounded, if follows from
\eqref{E:splitting-1} that there exists $\nu_1$ such that $\nu \geq
\nu_1$ implies
\ben
\int_{S_\nu} | u_\nu |^2\, dV < \epsilon,
\een
where $S_\nu$ is a set of the form
\ben
S_\nu = \{ x \in M : d_\nu < \dist(x, E_\nu^\sharp) \leq d_\nu + 2 \}
\subset M \setminus (E_\nu^\sharp \cup E_\nu^b),
\een
for some $d_\nu >0$.  For $r>0$ and $\nu \geq \nu_1$, set 
\ben
\tE_\nu(r) = \{ x \in M : \dist(x, E_\nu^\sharp ) \leq r \},
\een
so that $S_\nu=\tE_\nu(d_\nu +2) \setminus \tE (d_\nu).$  Define functions
$\chi_\nu^\sharp$ and $\chi_\nu^b$ by 
\ben
\chi_\nu^\sharp(x) = \begin{cases}
1,\quad \text{ if } x \in \tE_\nu (d_\nu), \\
1 - \dist(x, \tE_\nu ( d_\nu ) ),\quad \text{ if } x \in \tE_\nu ( d_\nu +
1 ), \\
0,\quad \text{ if } x \notin \tE_\nu ( d_\nu + 1 ),
\end{cases}
\een
and
\ben
\chi_\nu^b(x) = \begin{cases}
0,\quad \text{ if } x \in \tE_\nu (d_\nu + 1), \\
\dist(x, \tE_\nu ( d_\nu ) ),\quad \text{ if } x \in \tE_\nu ( d_\nu +
2 ), \\
1,\quad \text{ if } x \notin \tE_\nu ( d_\nu + 2 ),
\end{cases}
\een
These functions are both Lipschitz with Lipschitz constant $1$ and
almost disjoint supports.  Set $u_\nu^\sharp = \chi_\nu^\sharp u_\nu$ and
$u_\nu^b = \chi_\nu^b u_\nu$.  Since $0 \leq \chi_\nu^\sharp +
\chi_\nu^b \leq 1$, we have
\ben
J_p(u_\nu^\sharp) + J_p (u_\nu^b) \leq \int(\chi_\nu^\sharp +
\chi_\nu^b) | u_\nu|^{p+1}\, d V \leq J_p(u_\nu) = \beta.
\een
Also, of course if $\lambda\geq 0$,
\ben
\lambda \| u_\nu^\sharp \|_{L^2}^2 + \lambda \| u_\nu^b
\|_{L^2}^2  \leq \lambda \| u_\nu \|_{L^2}^2 ,
\een
while if $\lambda<0$, we have
\ben
| \lambda | \left( \| u_\nu \|_{L^2}^2 - \| u_\nu^\sharp \|_{L^2}^2 -
  \| u_\nu^b \|_{L^2}^2 \right) < |\lambda | \epsilon.
\een
We have $\nabla u_\nu^\sharp = \chi_\nu^\sharp \nabla u_\nu + (\nabla
\chi_\nu^\sharp) u_\nu$ and similarly for $u_\nu^b$, and $| \nabla
\chi_\nu^\sharp | \leq 1$ except for a set of measure zero, so 
\ben
\| \nabla u_\nu^\sharp \|_{L^2}^2 + \| \nabla u_\nu^b \|_{L^2}^2 \leq
\| \nabla u_\nu \|_{L^2}^2 +  \int_{S_\nu} | u_\nu |^2\, dV.
\een
As a consequence, we have
\begin{equation}
\label{E:F-lambda-sharp}
F_\lambda ( u_\nu^\sharp ) + F_\lambda ( u_\nu^b)  \leq F_\lambda
(u_\nu) +  \epsilon + \sigma(\lambda) \epsilon,
\end{equation}
where 
\ben
\sigma(\lambda) =
\begin{cases} | \lambda |,\quad -\delta_0 < \lambda < 0,  \\
0,\quad \lambda \geq 0.
\end{cases}
\een
Using the support properties of $u_\nu^\sharp$, $u_\nu^b$ together
with \eqref{E:splitting-1}-\eqref{E:splitting-2} yields
\begin{equation}
\label{E:J-p-sharp}
| J_p (u_\nu^\sharp) - \alpha |,\quad | J_p(u_\nu^b) - (\beta -
\alpha) | \leq 3 \epsilon.
\end{equation}
Combining \eqref{E:F-lambda-sharp}-\eqref{E:J-p-sharp}, and letting
$\nu \to \infty$, we get
\ben
I_\alpha + I_{\beta - \alpha} \leq I_\beta + C_2 \epsilon,
\een
where $C_2$ depends only on $\delta_0>0$, the bottom of the spectrum
of $-\Delta$.  Hence if $C_1>C_2$ is chosen sufficiently large in
\eqref{E:C-1-eps} (which simply amounts to producing $\ep$ sufficiently
small), we contradict \eqref{E:C-1-eps}.  This, together
with Proposition \ref{P:no-vanishing} yields the following
proposition, which states that for a minimizing sequence $u_\nu$, only
the concentration phenomenon can occur.

\begin{proposition}
\label{P:only-concentration}
Let $\{ u_\nu \}$ be a minimizing sequence of $I_\lambda$.  Then every
subsequence of the $\{u_\nu \}$ has a further subsequence (which we
will continue to denote by $\{ u_\nu \}$) with the following property.
There exists a sequence $\{ y_\nu \} \subset M$ and a function
$\tR(\epsilon )$ such that for all $\nu$,
\begin{equation}
\label{E:only-concentration}
\int_{B_{\tR(\epsilon)}(y_\nu)} | u_\nu|^{p+1}\, dV > \beta - \epsilon,
\quad \forall\, \epsilon>0,
\end{equation}
\end{proposition}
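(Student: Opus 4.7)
The plan is to combine the trichotomy Lemma with the exclusion of alternatives (i) and (iii) that the preceding discussion has already prepared. Since $F_\lambda(u_\nu) \to I_\beta$ and $F_\lambda \simeq \|\cdot\|_{H^1}^2$, the minimizing sequence $\{u_\nu\}$ is automatically bounded in $H^1(M)$, so all three alternatives of the trichotomy apply and I may extract a subsequence (still called $\{u_\nu\}$) satisfying exactly one of them.

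The vanishing alternative is ruled out directly by Proposition \ref{P:no-vanishing}: the hypothesis $p+1 \in (2, 2n/(n-2))$ places the exponent in the range covered by Lemma \ref{L:vanishing}, and (i) would force $\|u_\nu\|_{L^{p+1}} \to 0$, contradicting $J_p(u_\nu) = \beta > 0$.

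To rule out splitting I would argue by contradiction, using as the essential input the strict subadditivity
\[
I_\beta < I_\alpha + I_{\beta-\alpha}, \qquad \alpha \in (0,\beta),
\]
which follows from Proposition \ref{P:subadd}, because $I_\beta = I_1 \beta^{2/(p+1)}$ with exponent $2/(p+1)<1$, and $t \mapsto t^{2/(p+1)}$ is strictly subadditive on $(0,\infty)$. Assuming (iii) with splitting parameter $\alpha$, fix $\epsilon>0$ so small that $I_\beta + C_1 \epsilon < I_\alpha + I_{\beta-\alpha}$ for a constant $C_1$ to be chosen. Using the uniform $H^1$-bound together with $\dist(E_\nu^\sharp, E_\nu^b) \to \infty$, a pigeonhole argument extracts a shell $S_\nu$ of thickness $2$ separating $E_\nu^\sharp$ and $E_\nu^b$ on which $\|u_\nu\|_{L^2(S_\nu)}^2 < \epsilon$. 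Lipschitz cutoffs $\chi_\nu^\sharp, \chi_\nu^b$ with $|\nabla\chi|\le 1$ and almost-disjoint supports then produce truncations $u_\nu^\sharp, u_\nu^b$ satisfying $F_\lambda(u_\nu^\sharp) + F_\lambda(u_\nu^b) \le F_\lambda(u_\nu) + (1+\sigma(\lambda))\epsilon$ and $J_p(u_\nu^\sharp), J_p(u_\nu^b)$ within $3\epsilon$ of $\alpha$ and $\beta-\alpha$. Rescaling via Corollary \ref{c2.1.R} to land exactly on the constraint surfaces $J_p = \alpha$ and $J_p = \beta-\alpha$ and passing to the limit $\nu \to \infty$ yields $I_\alpha + I_{\beta-\alpha} \le I_\beta + C_2 \epsilon$, which, for $C_1 > C_2$, contradicts the choice of $\epsilon$.

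With (i) and (iii) excluded, the trichotomy forces (ii) along the chosen subsequence, which is exactly \eqref{E:only-concentration}. The delicate step is the splitting argument: one must carefully track the truncation error in the gradient term (handled by the thin $L^2$ shell), absorb the negative contribution of the missing $L^2$ mass when $\lambda < 0$ (captured by $\sigma(\lambda)$), and verify that the rescaling constants needed to land on the constraint surfaces stay bounded uniformly in $\nu$, so that the cumulative error remains $O(\epsilon)$. The strict subadditivity furnished by Proposition \ref{P:subadd} is what turns these soft estimates into a quantitative contradiction.
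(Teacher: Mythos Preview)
Your proposal is correct and follows essentially the same route as the paper: apply the trichotomy, exclude vanishing via Proposition \ref{P:no-vanishing}, and exclude splitting by the cutoff-and-shell argument driven by the strict subadditivity from Proposition \ref{P:subadd}. The only cosmetic difference is that you rescale the truncations to hit $J_p=\alpha$ and $J_p=\beta-\alpha$ exactly, whereas the paper leaves $J_p(u_\nu^\sharp),J_p(u_\nu^b)$ within $3\epsilon$ of the targets and absorbs the discrepancy into $C_2\epsilon$ using the explicit formula $I_\beta=I_1\beta^{2/(p+1)}$; both bookkeeping choices are equivalent.
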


\begin{remark}

It is very important to observe the following facts in Proposition
\ref{P:only-concentration}.  The sequence of points $\{ y_\nu \}$ is
independent of $\epsilon>0$, and the function $\tR(\epsilon)$ is
independent of the index $\nu$.

Proposition \ref{P:only-concentration} is about {\it concentration}
along subsequences of a minimizing sequence.  It holds on any $\Ci$
manifold $M$ with bounded geometry.  In order to show that a minimizer
actually exists, we need {\it compactness}, which will follow once we
assume the additional structure of {\it weak homogeneity} on $M$.
This will allow us to map the sequence $\{y_\nu \}$ into a compact
region so that any subsequence which concentrates as in Proposition
\ref{P:only-concentration} will enjoy {\it compact} Sobolev embeddings
by Rellich's theorem.
\end{remark}

\subsection{Compactness and existence of minimizers}\label{s2.2}

For this section, let $M$ be a smooth Riemannian manifold of dimension
$n \geq 2$, satisfying
the following weak homogeneity condition:

\begin{definition}
Let $M$ be a smooth, complete
Riemannian manifold, possibly with boundary.  We say
$M$ is {\it weakly homogeneous} if there exists a group $G$ of
isometries of $M$ and a number $D>0$ such that for every $x, y \in M$,
there exists an element $g \in G$ such that $\dist(x, g(y) ) \leq D$.
\end{definition}

\noindent
{\bf Remark.} Such Riemannian manifolds necessarily have $\Cal{C}^\infty$
bounded geometry.
\newline $\text{}$

We retain the hypotheses (\ref{E:delta-spec})--(\ref{E:p-range}).
Let $\{u_\nu \} \subset H^1(M)$ be a minimizing sequence for \eqref{E:I-beta},
that is,
\ben
J_p( u_\nu) = \beta, \,\,\, F_\lambda(u_\nu ) \leq I_\beta +
\frac{1}{\nu},
\een
where $F_\lambda$ is given by \eqref{E:F-lambda} and $J_p$ is given by
\eqref{E:J-p}
as usual.  After passing to a subsequence if necessary, Proposition
\ref{P:only-concentration} shows we have points $\{ y_\nu \} \subset
M$ and a function $\tR(\epsilon)$ such that
\eqref{E:only-concentration} holds.  

We now fix a base point, or ``origin'' $o \in M$ and apply the weak
homogeneity hypothesis: for each $\nu$, there exists $g_\nu \in G$ and
$x_\nu \in B_D(o)$ such that $x_\nu = g_\nu ( y_\nu)$.  Set $v_\nu(x)
= u_\nu(g_\nu^{-1} (x))$, so that $v_\nu$ is now concentrated near $o$
instead of near $y_\nu$.  The sequence $\{ v_\nu \}$ satisfies
\ben
J_p(v_\nu) = J_p(u_\nu) = \beta, \,\,\, F_\lambda(v_\nu ) = F_\lambda
(u_\nu ) \leq I_\beta + \frac{1}{\nu},
\een
and
\begin{equation}
\label{E:conc-int}
\int_{B_{\tR(\epsilon) + D }(o) } |v_\nu|^{p+1}\, dV > \beta - \epsilon,
\,\,\, \forall\, \epsilon>0.
\end{equation}
Passing to a further subsequence, which we continue to denote by $\{
v_\nu \}$, we have
\ben
v_\nu \to v\ \text{ weakly in }\ H^1(M).
\label{2.2.3A}
\een
Since
$F_\lambda$ is comparable to the $H^1(M)$ norm squared, we have 
\ben
F_\lambda(v) \leq \liminf_{\nu \to \infty} F(v_\nu ) = I_\beta.  
\een
Similarly
\ben
J_p(v) \leq \liminf_{\nu \to \infty} J_p(v_\nu) = \beta.
\een
On the other hand, by \eqref{E:Rellich}, we have for each $\epsilon>0$, 
\ben
v_\nu \to v, \text{ in the } L^{p+1}(B_{\tR(\epsilon) + D}(o)) \text{
  norm,}
\een
so that \eqref{E:conc-int} implies $J_p(v) \geq \beta$.  Hence $J_p(v)
= \beta$, which in turn implies
\ben
F_\lambda (v) = I_\beta.
\label{2.2.7}
\een
Let us
summarize this argument in the following Proposition.

\begin{proposition}
If $M$ is a weakly homogeneous Riemannian manifold,
and if \eqref{E:delta-spec}--\eqref{E:p-range} hold,
then there exists a
minimizer $v \in H^1(M)$ of $F_\lambda(v)$, subject to the constraint
$J_p(v) = \beta$.

\end{proposition}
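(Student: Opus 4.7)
The plan is to run the concentration-compactness scheme whose ingredients are already in place. I would begin with a minimizing sequence $\{u_\nu\}\subset H^1(M)$, $J_p(u_\nu)=\beta$, $F_\lambda(u_\nu)\to I_\beta$. Since $\lambda>-\delta_0$ and $\Spec(-\Delta)\subset[\delta_0,\infty)$, the functional $F_\lambda$ is comparable to $\|\cdot\|_{H^1(M)}^2$, so $\{u_\nu\}$ is bounded in $H^1(M)$. Proposition \ref{P:only-concentration} then furnishes, after passing to a subsequence, points $y_\nu\in M$ and a function $\tR(\epsilon)$ such that \eqref{E:only-concentration} holds, ruling out vanishing and splitting.

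Next I would invoke weak homogeneity to move the concentration into a fixed compact region. Fix an origin $o\in M$; for each $\nu$ pick $g_\nu\in G$ with $\dist(g_\nu(y_\nu),o)\leq D$, and set $v_\nu=u_\nu\circ g_\nu^{-1}$. Since $g_\nu$ is an isometry, it preserves both $\nabla$ and $dV$, so $F_\lambda(v_\nu)=F_\lambda(u_\nu)$, $J_p(v_\nu)=J_p(u_\nu)=\beta$, and concentration near $y_\nu$ becomes concentration near $o$: for every $\epsilon>0$,
\[
\int_{B_{\tR(\epsilon)+D}(o)}|v_\nu|^{p+1}\,dV > \beta-\epsilon.
\]
Extracting a further subsequence yields $v_\nu\rightharpoonup v$ weakly in $H^1(M)$. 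Weak lower semicontinuity of the $H^1$ norm (equivalently of $F_\lambda$) gives $F_\lambda(v)\leq I_\beta$, and weak lower semicontinuity of the $L^{p+1}$ norm gives $J_p(v)\leq\beta$.

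To close the argument I would upgrade weak to strong convergence on the concentration ball. By the Rellich embedding \eqref{E:Rellich} applied to $\Omega=B_{\tR(\epsilon)+D}(o)$, $v_\nu\to v$ strongly in $L^{p+1}(\Omega)$, hence the displayed concentration inequality passes to the limit and yields $J_p(v)\geq\beta-\epsilon$ for every $\epsilon>0$, so $J_p(v)=\beta$. The definition of $I_\beta$ then forces $F_\lambda(v)=I_\beta$, proving $v$ is a minimizer.

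The substantive obstacle, already dealt with by the preceding propositions, is that a priori the $L^{p+1}$ mass of a minimizing sequence can leak to spatial infinity, become diffuse, or fission into two distant pieces. The earlier subsections ruled out vanishing (via Lemma \ref{L:vanishing} and the fact that $\int|u_\nu|^{p+1}\,dV=\beta>0$) and splitting (via the strict scaling identity $I_\gamma=(\gamma/\beta)^{2/(p+1)}I_\beta$ in Proposition \ref{P:subadd}, which gives the strict subadditivity \eqref{2.1.7}). The remaining difficulty in the present step is purely geometric: weak limits in $H^1(M)$ on a noncompact manifold do not in general retain their $L^{p+1}$ mass, and without a mechanism to relocate the concentration into a bounded set Rellich is unavailable. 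Weak homogeneity is precisely that mechanism, and the examples of \S\ref{s2.a} show that some such structural hypothesis is indispensable.
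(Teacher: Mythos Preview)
Your proof is correct and follows essentially the same approach as the paper: pass to a concentrating subsequence via Proposition \ref{P:only-concentration}, use weak homogeneity to translate the concentration centers $y_\nu$ into $B_D(o)$, extract a weak $H^1$ limit, and then use Rellich on $B_{\tR(\epsilon)+D}(o)$ to recover $J_p(v)=\beta$, forcing $F_\lambda(v)=I_\beta$. The paper's argument and yours are the same in structure and detail.
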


$\text{}$ \newline
{\bf Remark.} It follows from (\ref{2.2.7}) that convergence $v_\nu\rightarrow
v$ in (\ref{2.2.3A}) holds in norm in $H^1(M)$, hence in norm in $L^{p+1}(M)$.

\subsection{Manifolds with no $F_\lambda$ minimizers}\label{s2.a}

We take
\ben
\lambda>0,\quad \beta>0,
\quad p\in\Bigl(1,1+\frac{4}{n-2}\Bigr),
\label{2.A.1}
\een
and
\ben
M=\Rr^n\setminus K,
\label{2.A.2}
\een
where $K\subset\Rr^n$ is a smoothly bounded, compact set.  We impose the
Dirichlet boundary condition on $\pa M$, pick $\beta>0$, and seek minimizers
of $F_\lambda(u)$, given
\ben
u\in H^1_0(M),\quad J_p(u)=\beta.
\een
We will show that no such minimizer exists.  To see this, first compare
\ben
I_\beta(M)=\inf\, \{F_\lambda(u):u\in H^1_0(M),\, J_p(u)=\beta\}
\label{2.A.4}
\een
with
\ben
I_\beta(\Rr^n)=\inf\, \{F_\lambda(u):u\in H^1(\Rr^n),\, J_p(u)=\beta\}.
\een
It is clear that $I_\beta(M)\ge I_\beta(\Rr^n)$, since (\ref{2.A.4}) is an
inf over a smaller set of functions.

On the other hand, one can take an
$F_\lambda$-minimizer for $\Rr^n$, whose existence is classical (and follows
as a special case from \S{\ref{s2.2}}), or, without appealing to this
existence result, simply take an element $\tilde{u}_\ep\in H^1(\Rr^n)$
such that $J_p(\tilde{u}_\ep)=\beta$ and $F_\lambda(\tilde{u}_\ep)\geq
I_\beta(\Rr^n)-\ep/2$.
Translate this function to be concentrated
far away from $K$, and apply a cutoff to get an element $u_\ep\in H^1_0(M)$
such that $J_p(u_\ep)=\beta$ and $F_\lambda(u_\ep)\geq I_\beta(\Rr^n)-\ep$.
Thus
\ben
I_\beta(M)=I_\beta(\Rr^n).
\een
We now prove the following.

\begin{proposition} \label{p2a.1}
There does not exist $u\in H^1_0(M)$ such that $J_p(u)=\beta$ and
$F_\lambda(u)=I_\beta(M)$.
\end{proposition}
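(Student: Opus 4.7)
The plan is to argue by contradiction, leveraging the equality $I_\beta(M) = I_\beta(\Rr^n)$ that was just established. Suppose $u \in H^1_0(M)$ were an $F_\lambda$-minimizer with $J_p(u) = \beta$. Because $u$ has zero Dirichlet data, extension by zero across $\pa M$ produces $\tilde{u} \in H^1(\Rr^n)$ with $\|\nabla \tilde u\|_{L^2}^2 = \|\nabla u\|_{L^2}^2$ and $\|\tilde u\|_{L^2}=\|u\|_{L^2}$, hence $J_p(\tilde u) = \beta$ and $F_\lambda(\tilde u) = F_\lambda(u) = I_\beta(M) = I_\beta(\Rr^n)$. Therefore $\tilde u$ is itself an $F_\lambda$-minimizer on all of $\Rr^n$.

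Next I would invoke the Euler--Lagrange computation from \S{\ref{s1}}: the minimizer $\tilde u$ satisfies
\ben
-\Delta \tilde u + \lambda \tilde u = K_0 |\tilde u|^{p-1}\tilde u \quad \text{in }\mathcal{D}'(\Rr^n),
\een
for some $K_0 > 0$. Applying the smoothness result (from \S{\ref{s2.3}}) and the strict one-sign result (from \S{\ref{s2.4}}), both of which apply to minimizers on $\Rr^n$ since $\pa \Rr^n=\emptyset$, one concludes that (after reducing to the real-valued case by replacing $\tilde u$ by $|\tilde u|$, which is also a minimizer) $\tilde u$ is smooth and strictly of one sign on $\Rr^n$. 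But $K$ is smoothly bounded and compact, so it has nonempty interior, on which $\tilde u\equiv 0$. This is the desired contradiction.

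If one prefers a self-contained finish that avoids forward references to \S\S{\ref{s2.3}}--{\ref{s2.4}}, the same contradiction can be reached by elliptic bootstrapping and unique continuation: standard elliptic regularity applied to the Euler--Lagrange equation yields $\tilde u \in L^\infty_{\mathrm{loc}}(\Rr^n)$, so $\tilde u$ solves a linear equation $-\Delta \tilde u + V \tilde u = 0$ with $V = \lambda - K_0 |\tilde u|^{p-1} \in L^\infty_{\mathrm{loc}}(\Rr^n)$. Aronszajn's strong unique continuation theorem then forces $\tilde u \equiv 0$ on $\Rr^n$ (since $\tilde u$ vanishes on an open set), contradicting $J_p(\tilde u) = \beta > 0$.

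The main obstacle is really just the bookkeeping around the zero-extension step, ensuring that the Dirichlet condition is what allows $\tilde u \in H^1(\Rr^n)$ with the same $F_\lambda$ and $J_p$ values; once this is in hand, the contradiction with either strict positivity or unique continuation is immediate. No further use of the geometric structure of $M$ is needed.
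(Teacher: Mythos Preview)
Your argument is correct and follows essentially the same route as the paper: extend by zero to obtain an $F_\lambda$-minimizer on $\Rr^n$, then contradict the strict positivity of such minimizers against the vanishing on $K$. Your alternative finish via unique continuation is a nice self-contained variant the paper does not mention, but the core strategy is identical.
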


\begin{proof}  Suppose such $u\in H^1_0(M)$ does exist.  We can arrange that
$u\ge 0$ on $M$.  Set $v=u$ on $M$, $v=0$ on $K$.  Then
\ben
v\in H^1(\Rr^n),\quad J_p(v)=\beta,\quad F_\lambda(v)=I_\beta(M)=
I_\beta(\Rr^n),
\een
so $v$ must be an $F_\lambda$-minimizer on $\Rr^n$.
It is well known in the case of
$\Rr^n$ (cf.~\S{\ref{s2.3}} for more general results) that such $v$ must
be $>0$ everywhere on $\Rr^n$, which presents a contradiction.
\end{proof}

$\text{}$\newline
{\bf Remark.} The $F_\lambda$-minimizing sequences described above exhibit
concentration, consistent with results of \S{\ref{s2.1}}.  The lack of an
adequate family of isometries of $M$ in this setting prevents this from
yielding a compactness result, and hence an $F_\lambda$-minimizer.

$\text{}$ \newline
{\bf Remark.} One readily obtains similar non-existence results for the
complement of a compact set in a general noncompact, connected, weakly
homogeneous space.

$\text{}$ \newline
{\bf Remark.} In Appendix \ref{sa6}, we will build on the examples here
to give examples of positive solutions to (\ref{E:SNLS}) that are not
$F_\lambda$-minimizers, and examples of compact manifolds (with boundary)
for which (\ref{E:SNLS}) has two geometrically inequivalent positive
solutions.

\subsection{Smoothness of minimizers}\label{s2.3}

As stated above, in \S\S{\ref{s2.3}--\ref{s2.5}} we assume for simplicity
that $\pa M=\emptyset$.
We begin with a local regularity result.  Let $\Omega \subset M$ be an
open set, and assume $u \in \Hloc^1( \Omega )$ solves
\begin{equation}
\label{E:local-ell}
-\Delta u +\lambda u = f(u), \quad f(u) = K | u |^{p-1} u,
\end{equation}
with $p$ as in \eqref{E:p-range}.

\begin{proposition}
\label{P:local-reg-prop}
Every solution $u \in \Hloc^1( \Omega)$ to \eqref{E:local-ell}
satisfies
\ben
u \in C^{p+2}(\Omega)
\een
if $p \notin \mathbb{N}$.  If $p$ is an odd integer, then $u \in
C^\infty(\Omega)$.  If $p$ is an even integer, $u \in C^s(\Omega)$ for
all $s < p+2$, and if $p$ is an even integer and $u \geq 0$ on
$\Omega$ then $u \in C^\infty(\Omega)$.  Finally, for any $p$
satisfying \eqref{E:p-range}, if $u$ is nowhere vanishing on $\Omega$,
then $u \in C^\infty(\Omega)$.

\end{proposition}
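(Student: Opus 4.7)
The plan is a standard two-phase elliptic bootstrap for the semilinear equation $-\Delta u+\lambda u=K|u|^{p-1}u$. Phase~1 raises $u$ from $\Hloc^1$ to $C^{1,\alpha}_{\mathrm{loc}}$ using $L^q$ estimates, and Phase~2 iterates interior Schauder estimates, with the ceiling of the final regularity dictated by the scalar regularity of the nonlinearity $f(s)=K|s|^{p-1}s$ at $s=0$.

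For the integrability bootstrap, Sobolev embedding on small balls gives $u\in L^{2n/(n-2)}_{\mathrm{loc}}$ (or $L^r$ for all $r<\infty$ if $n=2$). Since $p+1<2n/(n-2)$, one has $f(u)\in L^{q_0}_{\mathrm{loc}}$ for some $q_0>1$, and interior $L^q$-regularity for $-\Delta+\lambda$ gives $u\in W^{2,q_0}_{\mathrm{loc}}$. Iterating with $\bigl\||u|^{p-1}u\bigr\|_{L^{q/p}}\lesssim\|u\|^p_{L^q}$ and Sobolev embedding pushes the exponent to infinity in finitely many steps, so $u\in W^{2,q}_{\mathrm{loc}}$ for every $q<\infty$ and hence $u\in C^{1,\alpha}_{\mathrm{loc}}$ for every $\alpha\in(0,1)$.

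For the Schauder bootstrap, a direct calculation shows that $f:\Rr\to\Rr$ lies in $C^p(\Rr)$ in the H\"older sense: $f\in C^{\lfloor p\rfloor,\,p-\lfloor p\rfloor}$ for non-integer $p$; $f(s)=K s^p$, a polynomial, for odd integer $p$; and $f\in C^{p-1,1}$ (but not $C^p$) for even integer $p$. Combining interior Schauder estimates with the composition rule $g\in C^{k,\gamma},\,v\in C^{k,\gamma}\Rightarrow g(v)\in C^{k,\gamma}$ gives the inductive step $u\in C^{k,\gamma}_{\mathrm{loc}}\Rightarrow f(u)\in C^{\min(k,\lfloor p\rfloor),\gamma}_{\mathrm{loc}}\Rightarrow u\in C^{\min(k,\lfloor p\rfloor)+2,\gamma}_{\mathrm{loc}}$. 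Starting from the $C^{1,\alpha}$ output of Phase~1, $u$ climbs to $C^{p+2}_{\mathrm{loc}}$ for non-integer $p$ and to $C^{s}_{\mathrm{loc}}$ for every $s<p+2$ for even integer $p$, the missing endpoint being the signature of the jump of $f^{(p)}$ at $s=0$. The regularity cap disappears whenever $f$ may be realised locally as a smooth function of $u$: for odd integer $p$ since $f$ is a polynomial; for even integer $p$ with $u\ge 0$, since then $|u|^{p-1}u=u^p$ on $\Omega$; and for nowhere-vanishing $u$, since each component of $\Omega$ sits in $\{u>0\}$ or $\{u<0\}$, where $f(u)=\pm K|u|^p$ is real analytic in $u$. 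In all three subcases the iteration never stalls and yields $u\in C^\infty$.

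The main obstacle is Phase~2 at the zero set $\{u=0\}$: one has to verify carefully the claimed H\"older regularity of $f$ at $s=0$, where $f^{(k)}(s)\sim|s|^{p-k}$ extends continuously up to order $\lfloor p\rfloor$ with H\"older exponent $p-\lfloor p\rfloor$, and the composition estimate $\|f(u)\|_{C^{k,\gamma}}\le C\bigl(\|u\|_{C^{k,\gamma}}\bigr)$ on domains where $u$ may touch zero. Once these scalar facts are in hand, the argument reduces to a routine iteration of standard elliptic regularity estimates capped by $\lfloor p\rfloor$.
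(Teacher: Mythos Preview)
Your proposal is correct and follows essentially the same two-phase bootstrap as the paper: an $L^q$ iteration from $\Hloc^1$ to some H\"older class, followed by a Schauder iteration whose ceiling is set by the scalar regularity of $f(s)=K|s|^{p-1}s$ at $s=0$. The only cosmetic difference is that the paper stops Phase~1 at $C^s$ for some $s\in(0,1)$ while you push to $C^{1,\alpha}$ for all $\alpha<1$, and the paper writes out the $L^q$ iteration explicitly via the parameter $\gamma$ with $p=\gamma^{-1}(n+2)/(n-2)$; neither affects the argument.
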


\begin{proof}
This is a standard result, but we sketch the proof here in preparation
for the global results in the sequel.  Recall that for {\it linear}
equations, we have elliptic regularity: for $1<q<\infty$ and $s \geq
0$, 
\ben
\aligned
-\Delta u + \lambda u = f \in \Hloc^{s,q}(\Omega) & \implies u \in
\Hloc^{s+2,q}(\Omega),  \\
-\Delta u + \lambda u = f \in C^{s}(\Omega) & \implies u \in
C^{s+2}(\Omega) \text{ (if } s \notin \ZZ\text{)}. 
\endaligned
\label{E:local-ell-reg}
\een
We also have Sobolev embeddings, such as
\ben
\aligned
& \Hloc^{s,q}(\Omega) \subset \Lloc^{nq/(n-sq)}(\Omega), \quad 0 < s <
\frac{n}{q},  \\
& \Hloc^{s + n/q,q}(\Omega) \subset C^s(\Omega), \quad 0 < s < 1.
\endaligned
\label{E:loc-Sobolev}
\een

In order to get started, write
\ben
p = \frac{1}{\gamma} \frac{n+2}{n-2}
\een
for some $\gamma>1.$  Then for $f(u)$ as in \eqref{E:local-ell},
\ben
\aligned
u \in \Hloc^1(\Omega) & \implies u \in \Lloc^{2n/(n-2)} ( \Omega)
\quad (\text{if } n \geq 3) \\
& \implies f(u) \in \Lloc^{2n\gamma/(n+2)} ( \Omega),
\endaligned
\een
so that \eqref{E:local-ell-reg} yields
\ben
u \in \Hloc^{2, 2n\gamma/(n+2)} (\Omega)
\een
if $n \geq 3$.  
If $n = 2$, then $u \in \Hloc^{2,q}(\Omega)$ for all $q<\infty$.
Observe 
\begin{equation}
\label{E:gamma-n}
\frac{2n \gamma}{n + 2} > \frac{n}{2} \Leftrightarrow \gamma > \frac{n+2}{4}.
\end{equation}
If \eqref{E:gamma-n} holds, we have
\begin{equation}
\label{E:u-in-Cs}
u \in C^s(\Omega)
\end{equation}
for some $s \in (0,1)$.  In the endpoint case $\gamma = (n+2)/4$, we
have
\ben
u \in \Lloc^q ( \Omega), \quad \forall q < \infty,
\een
and hence $f(u) \in \Lloc^q(\Omega)$ for all $q < \infty$ as well.
Then by \eqref{E:local-ell-reg}, 
\ben
u \in \Hloc^{2,q}(\Omega), \quad \forall q < \infty,
\een
and \eqref{E:u-in-Cs} holds in this case as well.  If $\gamma <
(n+2)/4$, we use the Sobolev embeddings \eqref{E:loc-Sobolev} to get
\ben
u \in \Lloc^{2n\gamma/(n+2-4\gamma)} (\Omega),
\een
and hence
\ben
f(u) \in \Lloc^{2n \gamma_2/(n+2)} (\Omega),
\een
where
\ben
\gamma_2 = \gamma^2 \frac{n-2}{n+2-4\gamma} > \gamma^2.
\een
Inserting this improved regularity for $f(u)$ into the elliptic
regularity estimates \eqref{E:local-ell-reg} yields now
\ben
u \in \Hloc^{2, 2n \gamma_2/(n+2)} (\Omega).
\een
A finite number of iterations of this procedure yields the property
\eqref{E:u-in-Cs}.  This in turn implies $f(u) \in C^s(\Omega)$, hence
$u \in C^{s+2} (\Omega)$ for some $s \in (0,1)$.  

From this, the conclusions of Proposition \ref{P:local-reg-prop}
follow, once we observe that if $p>1$ is not an odd integer, then one
cannot improve the implication 
\ben
u \in C^s(\Omega), \,\,s \geq p,
\implies f(u) \in C^p (\Omega),
\een
except when $p$ is an even integer and $u$ does not change sign, while
if $p$ is an odd integer, we get $f(u) \in C^s(\Omega)$.

\end{proof}

For the rest of this section, we assume $M$ satisfies the weak
homogeneity hypothesis.  We want global estimates for functions $u \in
H^1(M)$ satisfying \eqref{E:local-ell} on all of $M$.
We always have
\ben
(\lambda - \Delta )^{-1} : H^{s,q}(M) \to H^{s+2,q}(M)
\een
whenever $\lambda > -\delta_0$ and $q = 2$, however when $q \neq 2$
one often needs a stronger bound on $\lambda$.  Hence we will take a
different approach, which will also yield some decay estimates on
solutions.  

Let $\Omega \subset M$ be a bounded open set, $\tOmega \Subset
\Omega$.  Assume $\tOmega$ contains a ball of radius $D+1$.  Then we
can choose isometries $g_j \in G$ such that, if we set $\tOmega_j =
g_j(\tOmega)$, then the countable collection $\{ \tOmega_j \}$ covers
$M$, and we can assume (since $M$ has bounded geometry) that there
exists $m < \infty$ such that each point $x \in M$ is in at most $m$
of the $\tOmega_j$.  

Now depending on $p$ satisfying \eqref{E:p-range}, let $L$ be the
number of iterations required in the proof of Proposition
\ref{P:local-reg-prop}.  Choose
intermediate nested open sets:
\ben
\tOmega \Subset \Omega^{(L)} \Subset \cdots \Subset \Omega^{(1)}
\Subset \Omega,
\label{2.3.18}
\een
along with the associated translates 
\ben
\Omega^{(\ell)}_j = g_j ( \Omega^{(\ell)} ).
\label{2.3.19}
\een
We set
\begin{equation}
\label{E:Aj}
A_j = \| u \|_{H^1(\Omega_j)},
\end{equation}
so that
\begin{equation}
\label{E:H1-norm-equiv}
\| u \|_{H^1(M) }^2 \simeq \sum_j A_j^2.
\end{equation}
Applying the proof of Proposition \ref{P:local-reg-prop} on $\Omega$
and its translates by isometries yields a similar statement on each
$\Omega_j$ with constants independent of $j$.  From \eqref{E:Aj}, we
have
\ben
\| u \|_{L^{2n/(n-2)}(\Omega_j )} \leq C_1 A_j
\een
with $C_1$ independent of $j$.  As usual, if $n = 2$, this holds for
$L^q(\Omega_j)$, $q < \infty$.  Hence
\ben
\aligned
\| f(u) \|_{L^{2n\gamma / (n+2)}(\Omega_j ) } & \leq C_2 \| u^p
\|_{L^{2n/(n-2)p} (\Omega_j ) } \\
& \leq C_2 C_1^p A_j^p.
\endaligned
\een
The local elliptic regularity then gives
\begin{equation}
\label{E:C4}
\| u \|_{H^{2, 2n\gamma/(n+2)} (\Omega_j^{(1)} )} \leq C_3 (C_2 C_1^p
  A_j^p + C_1 A_j ) \leq C_4 A_j.
\end{equation}
Iterating this argument $L$ times, we obtain
\begin{equation}
\label{E:local-reg-j}
\| u \|_{C^s(\tOmega_j )} \leq C_\star A_j,
\end{equation}
for a constant $C_\star$ independent of $j$, and where $s$ satisfies
similar properties to that in Proposition \ref{P:local-reg-prop}.  We
record the result in the following Proposition.

\begin{proposition}

\label{P:local-reg-j}
If $u \in H^1(M)$ is a solution to \eqref{E:local-ell}, then
\eqref{E:local-reg-j} holds with $A_j$ given by \eqref{E:Aj},
$C_\star$ independent of $j$, and $s = 2+p$ if $p \notin 2
\mathbb{N}$, and for every $s < 2 + p$ if $p \in 2 \mathbb{N}$.
\end{proposition}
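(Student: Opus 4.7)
My plan is to carry out the iterative bootstrap of Proposition \ref{P:local-reg-prop} on each set $\Omega_j$ separately, while tracking constants carefully to confirm uniformity in $j$. The decisive ingredient is the weak homogeneity of $M$: the isometries $g_j \in G$ send $\Omega$ onto $\Omega_j$, and since both the Laplace-Beltrami operator and the nonlinearity $f(u) = K|u|^{p-1}u$ are invariant under isometries, the pulled-back function $u \circ g_j^{-1}$ satisfies \eqref{E:local-ell} on $\Omega$. Consequently, every Sobolev embedding and local elliptic regularity constant proven once on $\Omega$ (and its nested subdomains $\Omega^{(\ell)}$) transfers verbatim to each $\Omega_j$ (and its corresponding $\Omega^{(\ell)}_j$).

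To start the bootstrap, I would use $\|u\|_{H^1(\Omega_j)} = A_j$ together with Sobolev embedding to obtain $\|u\|_{L^{2n/(n-2)}(\Omega_j)} \le C_1 A_j$ (or $L^q$ for any $q<\infty$ when $n=2$), with $C_1$ independent of $j$. Then $\|f(u)\|_{L^{2n\gamma/(n+2)}(\Omega_j)} \le C_2 C_1^p A_j^p$, where $\gamma$ is defined by $p = \frac{1}{\gamma}\frac{n+2}{n-2}$, and local elliptic regularity passes this to the $H^{2, 2n\gamma/(n+2)}$ bound \eqref{E:C4} on $\Omega^{(1)}_j$. I would then iterate exactly as in the proof of Proposition \ref{P:local-reg-prop}, cycling through the nested chain \eqref{2.3.18}--\eqref{2.3.19}: at each stage Sobolev embedding upgrades the integrability, the nonlinearity is re-estimated in the improved $L^q$, and elliptic regularity shifts the bound onto the next inner set. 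After $L$ iterations (a number depending only on $p$ and $n$), we land in $C^s(\tOmega_j)$ with the stated $s$, and the cumulative constant is the product of finitely many per-stage constants, hence a single $C_\star$ independent of $j$.

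The dichotomy in $s$ is inherited directly from Proposition \ref{P:local-reg-prop}: once $u \in C^s$ for some $s>0$ on the interior set, the map $t \mapsto |t|^{p-1}t$ preserves $C^{p+2}$ regularity when $p \notin \NN$, yields $C^\infty$ when $p$ is an odd integer, but is only $C^s$ for every $s<p+2$ when $p$ is an even integer, the obstruction being the non-smoothness of $|t|$ at the origin. The only real obstacle worth watching is the uniformity of $C_\star$ in $j$, but this is automatic from the isometric structure: the entire bootstrap on $\Omega_j$ is a literal isometric copy of the bootstrap on $\Omega$, so the per-stage constants coincide across all $j$.
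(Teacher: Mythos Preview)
Your approach is essentially the paper's: run the bootstrap of Proposition \ref{P:local-reg-prop} on each $\Omega_j$, with the Sobolev and elliptic regularity constants uniform in $j$ because the $\Omega_j$ are isometric copies of $\Omega$. One point deserves sharpening: you attribute the uniformity of $C_\star$ entirely to the isometric structure, but the bootstrap produces nonlinear bounds like $C A_j^p$ (cf.~\eqref{E:C4}), and collapsing $A_j^p$ back to a constant times $A_j$ uses the global fact $A_j \le \|u\|_{H^1(M)}$ to absorb $A_j^{p-1}$ into the constant---this is precisely why $C_\star$ depends nonlinearly on $\|u\|_{H^1(M)}$, as the paper records in the Remark following the proposition.
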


\begin{remark}
Observe that the second inequality in \eqref{E:C4} uses that $A_j \leq
\| u \|_{H^1(M)}$ for each $j$.  Hence $C_\star$ is independent of
$j$, but depends on $\| u \|_{H^1(M)}$ in a nonlinear fashion.  In
light of \eqref{E:H1-norm-equiv}, we conclude
\begin{equation}
\label{E:local-reg-sum}
\sum_j \| u \|_{C^s(\tOmega_j ) }^2 \leq C \left( \| u \|_{H^1(M)}
\right) \| u \|_{H^1(M)}^2.
\end{equation}
\end{remark}

\subsection{Positivity of Minimizers}\label{s2.4}

In this subsection we examine the question of positivity of
minimizers.
If $u \in H^1(M)$ is a minimizer of
$F_\lambda$, subject to the constraint $J_p(u) = \beta$, set $v(x) = |
u(x) |$.  Of course we have
\ben
\| v \|_{L^2} = \| u \|_{L^2}, \quad \| \nabla v \|_{L^2} = \| \nabla
u \|_{L^2}, \quad \| v \|_{L^{p+1}} = \| u \|_{L^{p+1}},
\een
so $v$ is a solution to the same constrained minimization problem.
But then
\ben
v \geq 0, v \in H^1(M), \quad -\Delta v + \lambda v = K_0 | v |^{p-1}
v,
\label{2.4.2}
\een
with $K_0 = I_\beta / \beta$ as in \eqref{E:K0}.   Then Proposition
\ref{P:local-reg-prop} implies $v \in C^{2 + p }(M)$, and $v \in
C^\infty(M)$ if $p$ is an integer.  We improve this in the next
Proposition.

\begin{proposition}\label{p2.4.1}
\label{P:positivity}
The function $v = | u(x) |$ satisfies
\ben
v(x) >0
\een
for all $x \in M$, and hence $v \in C^\infty(M)$.
\end{proposition}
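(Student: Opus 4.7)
The plan is to view $v = |u|$ as a nonnegative solution of a \emph{linear} elliptic equation and apply the strong maximum principle. Using \eqref{2.4.2}, I would rewrite the equation (valid since $v \geq 0$) as
\[
-\Delta v + W(x)\, v = 0, \qquad W(x) = \lambda - K_0\, v(x)^{p-1},
\]
so that once $W \in L^\infty(M)$, standard linear elliptic tools apply.

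First I would verify that $v \in L^\infty(M)$, which immediately gives $W \in L^\infty(M)$. This is essentially already done in \S{\ref{s2.3}}: the estimate \eqref{E:local-reg-sum} shows that the norms $\|v\|_{C^s(\tOmega_j)}$ are uniformly bounded in $j$ (in fact square-summable), and since the translates $\{\tOmega_j\}$ cover $M$ this yields a uniform pointwise bound on $v$. Combined with the regularity $v \in C^{2+p}(M)$ from Proposition \ref{P:local-reg-prop}, we conclude that $v$ is a classical nonnegative solution of a linear elliptic equation whose zero-order coefficient is bounded.

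Second, I would invoke the Hopf strong maximum principle. In local coordinates the equation is uniformly elliptic with bounded coefficients, and the charts provided by the bounded geometry of $M$ have uniform control, so the classical Euclidean statement transfers verbatim: if a nonnegative $C^2$ solution of $-\Delta v + Wv = 0$ vanishes at some interior point $x_0 \in M$, then $v$ vanishes identically on the connected component of $M$ containing $x_0$. Assuming $M$ is connected (otherwise the argument applies componentwise and at least one component must carry the $L^{p+1}$ mass of $v$), the existence of a zero would force $v \equiv 0$, contradicting $J_p(v) = \beta > 0$. Hence $v(x) > 0$ for every $x \in M$, and the last clause of Proposition \ref{P:local-reg-prop} (nowhere vanishing implies $C^\infty$) upgrades $v$ to $v \in C^\infty(M)$.

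The main obstacle is really a bookkeeping one: one must know \emph{before} applying the maximum principle that the linearized coefficient $W$ is globally bounded, which in turn needs the global $L^\infty$ bound on $v$. Once that is extracted from \S{\ref{s2.3}}, the rest is a direct invocation of the Hopf principle in the bounded-geometry setting.
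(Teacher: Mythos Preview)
Your argument is correct and runs parallel to the paper's. Both routes first extract from \S{\ref{s2.3}} a global $L^\infty$ bound on $v$, hence boundedness of the linearized zero-order coefficient, and then invoke a standard elliptic tool to rule out interior zeros of a nonnegative solution. The only real difference is the choice of tool: you appeal to the Hopf strong maximum principle, whereas the paper cites the Harnack inequality (Gilbarg--Trudinger, Theorem~8.20 and Corollary~8.21), recording the quantitative estimate
\[
\sup_{\Omega_j^{(1)}} v \le C_0 \inf_{\Omega_j^{(1)}} v,
\]
with $C_0$ independent of $j$. For the bare positivity statement the two are interchangeable; the advantage of the paper's formulation is that this uniform Harnack bound is reused immediately afterward (Proposition~\ref{p2.4.4}) to push the global $C^s$ estimates past $s=p+2$, where one needs two-sided control $C_1 A_j \le u \le C_2 A_j$ on each $\Omega_j^{(1)}$. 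Your maximum-principle argument gives $v>0$ but not this quantitative lower bound, so if you were writing the whole section you would still need to invoke Harnack at that later stage.
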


This result is a consequence of the following Harnack inequality, which
follows from Theorem 8.20 -- Corollary 8.21 of \cite{GiTr-book}.
If $v\ge 0$ solves (\ref{2.4.2}) on the weakly homogeneous space $M$, and
if $\Omega^{(\ell)}_j$ are as in (\ref{2.3.18})--(\ref{2.3.19}), then
(in light of the bounds on $|v|^{p-1}$ established in \S{\ref{s2.3}})
there exists a constant $C_0$, independent of $j$, such that
\ben
\sup_{ x \in \Omega_j^{(1)}} u(x) \leq C_0 \inf_{ x \in
  \Omega_j^{(1)}} u(x),
\label{2.4.4}
\een

Given (\ref{2.4.4}), if $v\ge 0$ solves (\ref{2.4.2}) and is not
$\equiv 0$, strict positivity is immediate, and smoothness follows from
Proposition \ref{P:local-reg-prop}, so Proposition \ref{p2.4.1} is proven.
This in turn immediately gives the following.

\begin{corollary}
Every real-valued $F_\lambda$-minimizer $u$ satisfies either $u >0$ on $M$
or $u <0$ on $M$.
\end{corollary}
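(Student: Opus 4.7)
The plan is to deduce the corollary directly from Proposition \ref{p2.4.1}, with continuity and connectedness of $M$ supplying the last step. First I would let $u \in H^1(M)$ be a real-valued $F_\lambda$-minimizer subject to $J_p(u) = \beta$. Setting $v(x) = |u(x)|$, the equalities $\|v\|_{L^2} = \|u\|_{L^2}$, $\|\nabla v\|_{L^2} = \|\nabla u\|_{L^2}$, $\|v\|_{L^{p+1}} = \|u\|_{L^{p+1}}$ (already recorded at the start of \S\ref{s2.4}) show that $v$ is itself an $F_\lambda$-minimizer, so Proposition \ref{p2.4.1} applies to $v$ and yields $v(x) > 0$ for every $x \in M$.

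Next I would observe that $u$ is continuous on $M$: since $u$ satisfies \eqref{E:local-ell} with $K = K_0$ and $u \in H^1(M)$, Proposition \ref{P:local-reg-prop} gives $u \in C^{p+2}(M)$ (or better, depending on the parity of $p$). Because $v = |u| > 0$ everywhere, the continuous function $u$ cannot vanish anywhere on $M$.

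Finally, I would use connectedness of $M$, which is forced by the weak homogeneity hypothesis: if $x, y$ lie in distinct connected components then $\dist(x, g(y)) = +\infty$ for every isometry $g$, contradicting the existence of $g \in G$ with $\dist(x, g(y)) \leq D$. Hence $M$ is connected, and a continuous, nowhere-vanishing real-valued function on a connected space has constant sign. This gives either $u > 0$ on $M$ or $u < 0$ on $M$, completing the proof.

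There is essentially no obstacle here, since Proposition \ref{p2.4.1} has done all of the substantive work via the Harnack inequality \eqref{2.4.4}; the corollary is a one-line consequence once one notes that $|u|$ is also a minimizer and that the underlying manifold is connected.
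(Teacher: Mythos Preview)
Your approach is correct and matches the paper's, which simply records the corollary as ``immediate'' from Proposition \ref{p2.4.1}; you have spelled out exactly the intended argument (pass to $v=|u|$, apply the proposition, use continuity of $u$ and connectedness of $M$).

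One small correction: your justification of connectedness from weak homogeneity is not quite right. You claim that if $x,y$ lie in distinct components then $\dist(x,g(y))=+\infty$ for \emph{every} isometry $g$, but an isometry may well carry the component of $y$ onto the component of $x$ (e.g.\ $M=\Rr^n\sqcup\Rr^n$ with $G$ generated by translations and the swap is weakly homogeneous yet disconnected). The paper simply takes $M$ to be connected throughout---this is implicit already in the use of the Harnack inequality \eqref{2.4.4} to deduce strict positivity in Proposition \ref{p2.4.1}, and is explicit in remarks elsewhere (cf.\ the remark following Proposition \ref{p2a.1}). So you should just invoke connectedness as a standing hypothesis rather than try to derive it.
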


We now wish to extend the global regularity estimates on $u$ beyond
$s = 2 + p$, when $u>0$ on $M$.
The issue is that although $u>0$ on $M$, $u$ must decay at infinity, and
since $f(u)$ is singular at $u = 0$, there is some work to be done.
Again the Harnack inequality (\ref{2.4.4}) (with $u$ in place of $v$)
provides the key to success.  With this, we can establish the following
improvement of Proposition \ref{P:local-reg-j}.

\begin{proposition}\label{p2.4.4}
If $u \in H^1(M)$ solves \eqref{E:local-ell} on $M$ and $u>0$, then  
\eqref{E:local-reg-j} and \eqref{E:local-reg-sum} hold for every $s <
\infty$ with constants depending on $s$ but not on $j$.

\end{proposition}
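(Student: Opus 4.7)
The plan is to leverage the Harnack inequality \eqref{2.4.4} to overcome the obstruction that stalled the bootstrap in Proposition \ref{P:local-reg-j} at $s=p+2$, namely the lack of smoothness of $y\mapsto y^p$ at $y=0$ when $p$ is not an odd integer. Since $u>0$ on $M$, Harnack will give a uniform local lower bound for $u$, and a rescaling that normalizes this lower bound will turn the problematic nonlinearity into a genuinely smooth one, letting the iteration continue indefinitely.

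First I would fix $j$ and set $m_j=\inf_{\Omega_j^{(1)}}u$, $M_j=\sup_{\Omega_j^{(1)}}u$. By \eqref{2.4.4} one has $M_j\le C_0 m_j$, while Proposition \ref{P:local-reg-j} (together with Sobolev embedding on bounded-geometry pieces and the bounded-overlap covering) gives $M_j$ bounded uniformly in $j$. Then I would introduce the rescaled function $w_j(x)=u(x)/m_j$ on $\Omega_j^{(1)}$, so that $1\le w_j\le C_0$ pointwise and $w_j$ satisfies
\begin{equation*}
-\Delta w_j+\lambda w_j=\mu_j\,w_j^p,\qquad \mu_j:=K\,m_j^{p-1},
\end{equation*}
with $\mu_j$ uniformly bounded in $j$ (since $m_j\le M_j$ is uniformly bounded).

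The crucial observation is that $y\mapsto y^p$ is $C^\infty$ on the compact interval $[1,C_0]$, with all derivatives bounded by constants depending only on $p$ and $C_0$. Consequently, whenever $w_j\in C^{k,\alpha}$ on a subdomain, so is $w_j^p$, with norm controlled by $\|w_j\|_{C^{k,\alpha}}$ times universal constants (from Fa\`a di Bruno, say). Starting from $\|w_j\|_{L^\infty(\Omega_j^{(1)})}\le C_0$, I would apply interior linear elliptic regularity on a chain of nested subdomains shrinking to $\tOmega_j$, iterated as many times as the target regularity requires: first $w_j\in W^{2,q}$ for every $q<\infty$, hence $w_j\in C^{1,\alpha}$; then $\mu_j w_j^p\in C^{1,\alpha}$, so $w_j\in C^{3,\alpha}$; and so on, without termination. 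This yields $\|w_j\|_{C^s(\tOmega_j)}\le C_s$ for every $s<\infty$, with $C_s$ independent of $j$.

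Undoing the rescaling gives $\|u\|_{C^s(\tOmega_j)}=m_j\|w_j\|_{C^s(\tOmega_j)}\le C_s\,m_j$, and a routine comparison of $m_j$ with $A_j=\|u\|_{H^1(\Omega_j)}$ (via, say, the mean-value estimate $m_j^2\le C\|u\|_{L^2(\Omega_j^{(1)})}^2$) upgrades this to the form \eqref{E:local-reg-j} for arbitrary $s$. The summed bound \eqref{E:local-reg-sum} then follows immediately from \eqref{E:H1-norm-equiv} and bounded overlap. The main obstacle is ensuring that the nonlinear bootstrap for the $w_j$ produces constants independent of $j$, and this is exactly what Harnack buys: by fixing the oscillation ratio $M_j/m_j\le C_0$ uniformly, the rescaled problems all live in a single compact family, so one set of elliptic constants handles every location simultaneously.
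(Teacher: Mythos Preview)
Your argument is correct and rests on the same key input as the paper's proof: the Harnack inequality \eqref{2.4.4} pins down $u\sim m_j\sim M_j$ uniformly in $j$, which removes the singularity of $y\mapsto y^p$ at $y=0$ and lets the elliptic bootstrap run to arbitrary order with $j$-independent constants.

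The execution differs slightly. The paper works directly with $u$: by induction on $k$, it assumes $\|u\|_{C^k(\Omega_j)}\le C_kA_j$, uses Harnack to write $C_1A_j\le u\le C_2A_j$ on $\Omega_j^{(1)}$, and then estimates $D^\alpha(u^p)$ via Fa\`a di Bruno, bounding each $|f^{(\nu)}(u)|\le CA_j^{p-\nu}$ so that the whole expression is $\le CA_j^p\le C''A_j$ (absorbing the extra powers into the global bound $A_j\le\|u\|_{H^1(M)}$). Your rescaling $w_j=u/m_j$ is a cleaner repackaging of the same computation: it converts the family of problems into a single compact family with $w_j\in[1,C_0]$ and uniformly bounded coefficient $\mu_j$, so the nonlinearity is manifestly smooth and one set of Schauder constants does the job. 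The paper's version keeps the $A_j$-scaling visible throughout, which makes the final bound $\|u\|_{C^s(\tOmega_j)}\le C_sA_j$ immediate without the separate step of comparing $m_j$ to $A_j$; your version isolates the geometric content (Harnack fixes the oscillation ratio) more transparently. Both are equally short.
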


\begin{proof}

It suffices to prove the statment for $s \in \mathbb{N}$.  Proposition
\ref{P:local-reg-j} implies that the assertions are true when $s = 3$,
and we proceed by induction.  Let $k \in \NN$ and suppose
\eqref{E:local-reg-j} holds for $s = k$.  Then the covering property
of the $\{ \tOmega_j \}$ implies
\ben
\| u \|_{C^k(\Omega_j)} \leq C_k A_j
\een
for some $C_k$ independent of $j$.  We want to show 
\begin{equation}
\label{E:u-in-Ck1}
\| u \|_{C^{k+1}(\tOmega_j)} \leq C_{k+1} A_j
\end{equation}
for some $C_{k+1}$ independent of $j$.

We need to estimate the $C^k$ norm of $f(u)$.  The chain rule applied
to $f(u)$ gives
\ben
D^\alpha f(u) = \sum_{\gamma_1 + \gamma_2 \cdots + \gamma_\nu =
  \alpha,  \nu , | \gamma_\mu| \geq 1 } C_\gamma u^{(\gamma_1)} \cdots
u^{(\gamma_\nu)} f^{(\nu)} (u).
\een
Now
\ben
| f^{(\nu)}(u) | \leq C | u |^{p-\nu},
\een
and from the Harnack inequality and our induction hypothesis, there
exists $C_1, C_2$ such that 
\ben
C_1 A_j \leq u \leq C_2 A_j \text{ on } \Omega_j^{(1)},
\een
so that
\ben
| f^{(\nu)}| \leq C A_j^{p - \nu}.
\een
Hence for $| \alpha | \leq k$ (so that, in particular, $ \nu ,
| \gamma_\mu| \leq k$), we have
\ben
\aligned
| D^\alpha f(u) | & \leq C \sum_\gamma A_j^\nu A_j^{p-\nu} \\
& \leq C' A_j^p \\
& \leq C'' A_j,
\endaligned
\label{2.4.13}
\een
where again $C''$ is allowed to depend nonlinearly on the quantity $\|
u \|_{H^1(M)}$.  The last inequality in (\ref{2.4.13}) uses the global
bound $A_j\le \|u\|_{H^1(M)}$.
Hence
\ben
\| f(u) \|_{C^k(\Omega_j^{(1)})} \leq C A_j.
\een
Then the local elliptic regularity applied to \eqref{E:local-ell}
yields \eqref{E:u-in-Ck1}, completing the proof.

\end{proof}

\subsection{Further decay estimates}\label{s2.5}

In this section we continue to study properties of a positive solution
$u\in H^1(M)$ to the elliptic equation
\ben
-\Delta u + \lambda u = f(u),
\een
where
\ben
f(u) = K | u |^{p-1} u.
\een
We also continue to assume this equation holds on a manifold $M$
that is weakly homogeneous in the sense described in the previous
sections.  So far we have shown that $u \in L^q(M)$ for every $q \in
[2,\infty]$, and for each $s < \infty$,
\ben
\|u\|_{C^s(\tOmega_j)}\le C_s A_j,\quad \forall\, j,
\label{2.5.2A}
\een
where $\{ \tOmega_j \}$ is an open
cover of $M$ by sets which are images under isometries of a fixed set
$\tOmega$ and $A_j = \| u \|_{H^1(\Omega_j)}$, so
\ben
\sum\limits_j A^2_j\approx \|u\|^2_{H^1(M)}.
\label{2.5.2B}
\een
These are varieties of decay results.  In this section we seek stronger
decay results.  Here, we replace the hypothesis $\lambda>-\delta_0$ by
\ben
\lambda>0,
\een
which for $\delta_0=0$ involves no change.

Since $\{e^{t\Delta}: t \geq 0 \}$ is a
contraction semigroup on $L^q(M)$ for each $q \geq 1$, we have
\ben
(-\Delta + \lambda)^{-1} = \int_0^\infty e^{-\lambda t} e^{t \Delta}
dt,
\een
which implies
\ben
(-\Delta + \lambda)^{-1} : L^q(M) \to L^q(M)
\een
for every $q \in [1,\infty]$, with operator norm bounded by
$\lambda^{-1}$.  Our previously estabished $L^q$ estimates on $u$ imply
\ben
f(u) \in L^q(M),\quad \forall\, q \in [1, \infty].
\een
Since $u = (-\Delta + \lambda)^{-1} f(u)$, we hence have $u \in
L^q(M)$ for every $q \in [1,\infty]$.  

Now set 
\begin{equation}
\label{E:Bj-def}
B_j = \| u \|_{L^1(\Omega_j ) }
\end{equation}
so that
\ben
\sum_j B_j \simeq \|  u \|_{L^1(M)}.
\een
Comparing to (\ref{2.5.2B}), we see the collection $\{ B_j
\}$ satisfy ``stricter bounds'' than $\{A_j\}$.
In this sense, the following result improves (\ref{2.5.2A}).

\begin{proposition}\label{p2.5.1}
For each $\epsilon \in (0,1)$, and each $s<\infty$, there exists
$C_{\epsilon, s}< \infty$ such that
\ben
\| u \|_{C^s(\tOmega_j)} \leq C_{\epsilon,s} A_j^\epsilon
B_j^{1-\epsilon},\quad \forall\, j.
\een

\end{proposition}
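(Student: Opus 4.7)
The plan is to combine Proposition~\ref{p2.4.4} with a new $L^\infty$-bound driven by $B_j$, coming from the Harnack inequality, and then to interpolate between the two. The key new ingredient will be the estimate
\[
\|u\|_{L^\infty(\tOmega_j)} \le C\, B_j, \qquad C \text{ independent of } j,
\]
which is precisely what trades the $H^1$-scale bound $A_j$ for the $L^1$-scale bound $B_j$.

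For this Harnack-type estimate I would argue essentially as in the proof of Proposition~\ref{p2.4.1}: the positive function $u$ satisfies a linear elliptic equation with coefficient $K|u|^{p-1}$ that is uniformly bounded on $M$ (since $u \in L^\infty(M)$, established just above the proposition), so the Gilbarg--Trudinger Harnack inequality applies with a constant depending only on $\|u\|_{L^\infty(M)}$ and the local geometry. Chaining finitely many Harnack balls across a fixed cover of $\tOmega$ inside $\Omega$ yields $\sup_{\tOmega_j} u \le C_0 \inf_{\tOmega'_j} u$ for some intermediate set $\tOmega \Subset \tOmega' \Subset \Omega$, and weak homogeneity makes $C_0$ independent of $j$ because the configuration at site $j$ is the isometric image under $g_j$ of the fixed one at the origin. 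Combined with the elementary bound $\inf_{\tOmega'_j} u \le |\tOmega'|^{-1}\int_{\tOmega'_j} u \le |\tOmega'|^{-1} B_j$, this delivers the displayed inequality.

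With this in hand I would invoke the standard intermediate-derivative (interpolation) inequality on the fixed domain $\tOmega$,
\[
\|v\|_{C^s(\tOmega)} \le C_{k,s}\Bigl(\|v\|_{C^k(\tOmega)}^{s/k}\, \|v\|_{L^\infty(\tOmega)}^{1-s/k} + \|v\|_{L^\infty(\tOmega)}\Bigr),
\]
valid for any integer $k > s$, and pull it back to each $\tOmega_j$ via $g_j$ with the same constant. Picking $k$ so large that $s/k \le \epsilon$ and then applying Proposition~\ref{p2.4.4} at order $k$ together with the Harnack bound gives
\[
\|u\|_{C^s(\tOmega_j)} \le C\bigl(A_j^{s/k} B_j^{1-s/k} + B_j\bigr).
\]
To convert the exponent $s/k$ into the desired $\epsilon$, the elementary Cauchy--Schwarz inequality $B_j \le |\Omega|^{1/2}\|u\|_{L^2(\Omega_j)} \le |\Omega|^{1/2} A_j$ yields $A_j^{s/k} B_j^{1-s/k} = A_j^\epsilon B_j^{1-\epsilon}(B_j/A_j)^{\epsilon - s/k} \le C\, A_j^\epsilon B_j^{1-\epsilon}$, and similarly $B_j = B_j^\epsilon\, B_j^{1-\epsilon} \le C A_j^\epsilon B_j^{1-\epsilon}$, completing the argument.

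The only genuine obstacle is the uniformity of all constants in $j$, but this is handled throughout by the weak-homogeneity observation that every $\tOmega_j$ is an isometric copy of the fixed $\tOmega$, so any purely local inequality (Harnack, intermediate-derivative, Cauchy--Schwarz) carries over with the same constant at every site, while the global $L^\infty(M)$ bound on $u$ keeps the coefficient $K|u|^{p-1}$ in the Harnack input uniformly controlled.
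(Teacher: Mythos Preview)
Your argument is correct but proceeds along a genuinely different route from the paper's. The paper never re-invokes Harnack at this stage; instead it starts from the bound $\|u\|_{L^\infty(\tOmega_j)}\le C A_j$ (already contained in (\ref{2.5.2A})), interpolates this against the $L^1$ bound $B_j$ to obtain $\|u\|_{L^q(\tOmega_j)}\le C A_j^{\epsilon/2}B_j^{1-\epsilon/2}$ for a suitable $q>1$, then interpolates this $L^q$ bound against the $H^{k,q}$ bound $\|u\|_{H^{k,q}(\tOmega_j)}\le C_k A_j$ (again from (\ref{2.5.2A})) to reach $\|u\|_{H^{\sigma,q}(\tOmega_j)}\le C_{\epsilon,\sigma}A_j^\epsilon B_j^{1-\epsilon}$, and finishes by Sobolev embedding. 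Your approach instead extracts the sharper pointwise bound $\|u\|_{L^\infty(\tOmega_j)}\le C B_j$ directly from Harnack and then runs a H\"older-space (intermediate-derivative) interpolation against the $C^k$ bound from Proposition~\ref{p2.4.4}. This is more elementary---no Sobolev-scale interpolation is needed---and the Harnack step yields a stand-alone estimate $\sup_{\tOmega_j}u\le C B_j$ that is of independent interest; the paper's argument, on the other hand, stays entirely within the interpolation framework already set up in \S\ref{s2.3}--\S\ref{s2.4} and does not rely on positivity or Harnack beyond what was used for Proposition~\ref{p2.4.4}.
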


\begin{proof}
To start, (\ref{2.5.2A}) implies
\ben
\| u \|_{L^\infty(\tOmega_j)} \leq C A_j
\een
for every $j$, with constants independent of $j$.  Interpolating with
\eqref{E:Bj-def}, for each $\epsilon>0$ we can produce $q>1$ such that
\begin{equation}
\label{E:u-first-int}
\| u \|_{L^q(\tOmega_j)} \leq C A_j^{\epsilon/2} B_j^{1 - \epsilon/2},
\end{equation}
for every $j$, with constants indepenent of $j$.  Next,
(\ref{2.5.2A}) implies that for each $k<\infty$ there exists a
constant $C_k<\infty$ such that
\ben
\|  u \|_{H^{k,q}(\tOmega_j)} \leq C_k A_j,
\een
for every $j$, where again $C_k$ is independent of $j$.  Then
interpolation with \eqref{E:u-first-int} implies that, for each
$\epsilon \in (0,1)$, $\sigma < \infty,$ there exists $C_{\epsilon,
  \sigma}$ such that
\ben
\| u \|_{H^{\sigma, q}(\tOmega_j)} \leq C_{\epsilon, \sigma}
A_j^\epsilon B_j^{1-\epsilon},
\een
where again the constants are independent of $j$.  Taking $\sigma>0$
sufficiently large proves the Proposition.

\end{proof}

\section{Energy Minimizers}\label{s3}

In this section we tackle a different constrained
minimization scheme, albeit for a slightly smaller range of powers $p$
in the nonlinear term of \eqref{E:NLS}.  The constrained minimization
procedure is one that minimizes energy with respect to fixed mass.
As stated in \S{\ref{s1}}, we now require
\begin{eqnarray}
\label{E:subcrit}
p \in \Bigl(1, 1 + \frac{4}{n}\Bigr),
\end{eqnarray}
which is the range of $L^2$ subcritical powers in the standard Euclidean case
example.  Such a case was also handled using concentration compactness on
$\Rr^n$ in \cite{L1}, which we here generalize to the setting of weakly
homogeneous spaces, $M$ as defined in \S{\ref{s2.2}}.  We desire to minimize
the functional
\begin{eqnarray}
E(u) = \frac12 \| \nabla u \|_{L^2}^2 - \frac{1}{p+1} \int_M |u|^{p+1}\, dV
\label{3.0.2}
\end{eqnarray}
over $H^1(M)$, subject to the constraint
\begin{eqnarray}
Q(u) = \| u \|_{L^2}^2 = \beta.
\label{3.0.3}
\end{eqnarray}
As in \S{\ref{s2}}, $H^1(M)$ will stand for $H^1_0(M)$ if $\pa M\neq\emptyset$
and we use the Dirichlet boundary condition.
As seen in (\ref{1.2.1})--(\ref{1.2.3}), given $u \in H^1 (M)$ a
solution to the constrained minimization problem, we must have,
for some $\lambda\in\Rr$,
\begin{eqnarray}
-\Delta u + \lambda u - |u|^{p-1} u = 0.
\end{eqnarray}

The range of powers in (\ref{E:subcrit})
plays an important role in the Gagliardo-Nirenberg
inequality
\begin{eqnarray}
\| u \|_{L^{p+1} (\RR^n)} \leq C \| u \|_{L^2}^{1-\gamma}
\| u \|_{H^1}^{\gamma},
\label{3.0.5}
\end{eqnarray}
where
\begin{eqnarray}
\gamma = \frac{n}{2} - \frac{n}{p-1}, \quad \text{hence }\
\gamma(p+1)<2.
\end{eqnarray}
As a result, we have
\ben
\aligned
\| u \|_{H^1}^2 &= E(u) + \frac{1}{p+1} \| u \|_{L^{p+1}}^{p+1} + Q(u) \\
& \leq E(u) + \tilde{C} Q(u)^{{(p+1)(1-\gamma)}/{2}}
\| u \|_{H^1}^{\gamma (p+1)} + Q(u), 
\endaligned
\label{E:EQbds}
\een
which gives a priori bounds at $\| u \|_{H^1}$ in terms of bounds on $E(u)$
and $Q(u)$.  Then, as in (\ref{1.3.2}), we take 
\begin{equation}
\label{E:I-beta-Emin}
\Cal{I}_\beta =  \inf \{ E (u)  :  u \in H^1(M), \, Q (u) = \beta \},
\end{equation}
for $\beta > 0$.
The a priori bounds in \eqref{E:EQbds} show that for a
particular $\beta$, we have $\Cal{I}_\beta > -\infty$ since $\gamma(p+1) < 2$.
Taking at this point the sequence $u_\nu \in H^1(M)$ such that
\begin{eqnarray}
Q(u_\nu) = \beta, \ E(u_\nu) \leq \Cal{I}_\beta + \frac1{\nu}.
\label{3.0.9}
\end{eqnarray}
Note, from \eqref{E:EQbds}, we have that $\| u_\nu \|_{H^1}$ bounded.
Then, as in \S{\ref{s2}}, we apply the concentration-compactness techniques 
to the $L^1$ sequence given by $\{ | u_\nu |^2 \}$.

It turns out that we need to assume
\ben
\Cal{I}_\beta < 0,
\label{3.0.9A}
\een
to show in the concentration-compactness argument
that splitting and vanishing cannot occur.  In connection with this,
note that replacing $u$ by $au$ in (\ref{3.0.2}) and letting $a\nearrow
+\infty$ shows that
\ben
\Cal{I}_\beta \to -\infty \ \text{ as }\ \beta \to +\infty.
\label{E:i-beta}
\een
In particular, $\Cal{I}_\beta<0$ for all sufficiently large $\beta$.
However, it is not guaranteed that $\Cal{I}_\beta<0$ for all $\beta$.
See Appendix \ref{sa2} for more on this.
Exploration of when such a negative energy
condition is satisfied for a weakly homogeneous space is an interesting area
for future research.

In \S{\ref{s3.1}} we demonstrate concentration for a subsequence of a
minimizing sequence (\ref{3.0.9}), when (\ref{3.0.9A}) holds.  In
\S{\ref{s3.2}} we establish compactness and prove existence of energy
minimizers when $M$ is a weakly homogeneous space, again under the
hypothesis (\ref{3.0.9A}).  (If $M$ is {\it compact}, (\ref{3.0.9A}) is not
needed.)  In \S{\ref{s3.3}} we show that the examples of manifolds with
no $F_\lambda$-minimizers given in \S{\ref{s2.a}} also have no energy
minimizers.  In \S{\ref{s3.4}} we note how results on smoothness,
positivity, and decay established for $F_\lambda$-minimizers in
\S\S{\ref{s2.3}--\ref{s2.5}} also hold for energy minimizers.
In \S{\ref{s3.5}} we compute the second variation of energy for an
energy minimizer, expressed in terms of operators $L_{\pm}$, defined in
(\ref{3.5.44}).  In \S{\ref{s3.a}} we give similar formulas for the
second variation of $F_\lambda$, for $F_\lambda$-minimizers.
In \S{\ref{s3.6}} we examine some spectral properties of
$L_{\pm}$, and draw a number of conclusions.  In particular, we
deduce from the fact that $L_-\geq 0$ that whenever an
energy minimizer $u\in H^1(M)$ satisfies (3.0.4), $\Spec (-\Delta+\lambda)
\subset [0,\infty)$.  Results of \S{\ref{s3.6}} are applied in
\S{\ref{s3.8}} to results concerning orbital stability.

\subsection{Concentration}\label{s3.1}

We need to show that there is no vanishing and no splitting.  We first
establish that there is no vanishing when $\Cal{I}_\beta<0$.  In fact,
The vanishing condition for our sequence implies that given $B_R (y) =
\{ x \in M : d_M (x,y) \leq R \}$, we have
\begin{eqnarray}
\label{E:van}
\lim_{\nu \to \infty} \sup_{y \in M} \int_{B_R (y) } | u_\nu |^2\, dV = 0,
\quad \forall\, R < \infty.
\end{eqnarray}
Using Lemma \ref{L:vanishing}, we have the following.

\begin{proposition}
\label{P:van}
Assume $\Cal{I}_\beta<0$.
For $u_\nu$ a sequence minimizing the energy with fixed mass, \eqref{E:van}
cannot occur.
\end{proposition}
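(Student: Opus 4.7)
The plan is to derive a contradiction from the vanishing hypothesis \eqref{E:van} together with the standing assumption $\mathcal{I}_\beta<0$, by showing that vanishing forces the nonlinear term in $E(u_\nu)$ to die, so that the energies become asymptotically nonnegative.

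First I would observe that a minimizing sequence $\{u_\nu\}$ satisfying \eqref{3.0.9} is bounded in $H^1(M)$: this is exactly the content of the a priori bound \eqref{E:EQbds}, since both $Q(u_\nu)=\beta$ and $E(u_\nu)\le \mathcal{I}_\beta+1/\nu$ are controlled and the Gagliardo-Nirenberg exponent satisfies $\gamma(p+1)<2$. With $\|u_\nu\|_{H^1}$ uniformly bounded, the hypothesis \eqref{E:van} is exactly the $L^2$-vanishing hypothesis \eqref{E:vanishing-2} of Lemma \ref{L:vanishing} (for the specific $R$ in \eqref{E:van} applied successively; any single $R>0$ suffices).

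Next I would invoke Lemma \ref{L:vanishing} to conclude $\|u_\nu\|_{L^r(M)}\to 0$ for every $r\in(2,2n/(n-2))$. The key arithmetic check is that our exponent $p+1$ lies in this range: by (\ref{E:subcrit}), $p+1<2+4/n$, and a direct computation
\[
\frac{2n}{n-2}-\Bigl(2+\frac{4}{n}\Bigr)=\frac{8}{n(n-2)}>0
\]
gives $p+1\in(2,2n/(n-2))$ (the case $n=2$ being trivial since then $2n/(n-2)=\infty$). Therefore $\|u_\nu\|_{L^{p+1}(M)}\to 0$.

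Finally I would plug this into the energy functional: since
\[
E(u_\nu)=\tfrac{1}{2}\|\nabla u_\nu\|_{L^2}^2-\tfrac{1}{p+1}\|u_\nu\|_{L^{p+1}}^{p+1},
\]
the second term tends to zero while the first is nonnegative, so $\liminf_{\nu\to\infty}E(u_\nu)\ge 0$. On the other hand, $E(u_\nu)\to \mathcal{I}_\beta<0$ by the minimizing property, which is the desired contradiction. No step is really an obstacle here; the only point to be careful about is that Lemma \ref{L:vanishing}'s conclusion covers $r=p+1$, which is precisely where the subcritical hypothesis (\ref{E:subcrit}) enters, and that the sign hypothesis $\mathcal{I}_\beta<0$ is used nontrivially (nothing in the argument would preclude vanishing if $\mathcal{I}_\beta=0$).
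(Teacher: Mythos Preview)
Your proof is correct and follows essentially the same approach as the paper: invoke the $H^1$-bound on the minimizing sequence, apply Lemma~\ref{L:vanishing} with $r=p+1$ (using \eqref{E:subcrit} to ensure $p+1<2n/(n-2)$), and conclude that the nonlinear term vanishes, forcing the limiting energy to be nonnegative, contradicting $\mathcal{I}_\beta<0$. Your write-up is in fact more explicit than the paper's, which compresses the argument to three lines.
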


\begin{proof}
For $p$ as in \eqref{E:subcrit}, let us assume that \eqref{E:van} occurs for
$\{ u_\nu \}$.  Then, Lemma \ref{L:vanishing} shows
\begin{eqnarray}
\| u_\nu \|_{L^{p+1}} \to 0,
\end{eqnarray}
implying
\begin{eqnarray}
\frac12 \| \nabla u_\nu \|_{L^2}^2 \to \Cal{I}_\beta < 0,
\end{eqnarray}
a contradiction.
\end{proof}

Our next task is to establish that there is no splitting.
If there were splitting, we see that for any $\alpha \in (0,\beta)$, for each
$\epsilon > 0$ there exists $\nu_0 \geq 1$ and sets $E_\nu^\#, \ E_\nu^b
\subset M$ such that
\begin{eqnarray}
d(E_\nu^\#,E_\nu^b) \to \infty \ \text{as} \ \nu \to \infty
\end{eqnarray}
and
\begin{eqnarray}
\left| \int_{E^\#_\nu} |u_\nu|^2\, dV - \alpha \right| < \epsilon,
\ \ \left| \int_{E^b_\nu} |u_\nu|^2\, dV - (\beta - \alpha) \right| < \epsilon .
\end{eqnarray}
We record here some subadditivity properties of $\Cal{I}_\beta$ in order to
argue similarly to the splitting argument in Section \ref{S:conc}.

\begin{proposition}
\label{P:subadd0}
If $\beta >0$, $\Cal{I}_\beta < 0$, $\sigma > 1$, then
\begin{eqnarray}
\Cal{I}_{\sigma \beta} < \sigma \Cal{I}_\beta.
\end{eqnarray}
\end{proposition}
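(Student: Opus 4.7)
The plan is to exploit the scaling $u \mapsto \sqrt{\sigma}\, u$, which multiplies $Q$ by $\sigma$ and is therefore the natural map from the $\beta$-mass class to the $\sigma\beta$-mass class. Direct computation gives
\[
E(\sqrt{\sigma}\,u) \;=\; \frac{\sigma}{2}\|\nabla u\|_{L^2}^2 \;-\; \frac{\sigma^{(p+1)/2}}{p+1}\int_M |u|^{p+1}\,dV
\;=\; \sigma E(u) \;-\; \frac{\sigma^{(p+1)/2}-\sigma}{p+1}\int_M |u|^{p+1}\,dV.
\]
Since $p>1$ and $\sigma>1$, the coefficient $\sigma^{(p+1)/2}-\sigma$ is strictly positive, so the scaled function pays a strictly negative correction relative to $\sigma E(u)$, provided $u$ has nonvanishing $L^{p+1}$ mass.

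Next I would feed a minimizing sequence into this identity. Take $\{u_\nu\}\subset H^1(M)$ with $Q(u_\nu)=\beta$ and $E(u_\nu)\to\Cal{I}_\beta<0$. From the definition of $E$,
\[
\frac{1}{p+1}\int_M |u_\nu|^{p+1}\,dV \;=\; \frac{1}{2}\|\nabla u_\nu\|_{L^2}^2 \;-\; E(u_\nu) \;\geq\; -E(u_\nu),
\]
so for all sufficiently large $\nu$ one has $\tfrac{1}{p+1}\int|u_\nu|^{p+1}\,dV \geq |\Cal{I}_\beta|/2 > 0$. This is the crucial input: the hypothesis $\Cal{I}_\beta<0$ forces a uniform lower bound on the $L^{p+1}$ norm of any good minimizing sequence, which is exactly what is needed to turn the nonnegative correction term into a quantitatively strict gain.

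Setting $v_\nu=\sqrt{\sigma}\,u_\nu$, we have $Q(v_\nu)=\sigma\beta$ and, combining the two displays above,
\[
E(v_\nu) \;\leq\; \sigma E(u_\nu) \;-\; \bigl(\sigma^{(p+1)/2}-\sigma\bigr)\frac{|\Cal{I}_\beta|}{2}
\]
for all $\nu$ large. Taking $\nu\to\infty$ and then passing to the infimum on the left yields
\[
\Cal{I}_{\sigma\beta} \;\leq\; \sigma\Cal{I}_\beta \;-\; \bigl(\sigma^{(p+1)/2}-\sigma\bigr)\frac{|\Cal{I}_\beta|}{2} \;<\; \sigma\Cal{I}_\beta,
\]
which is the claimed strict inequality.

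There is no serious obstacle here; the proof is essentially a one-line scaling argument plus the observation that negative energy forces the potential term to dominate. The only thing that needs watching is that the strict inequality really does come from the superlinearity $(p+1)/2>1$, which is why the $L^{p+1}$ term scales faster than the quadratic terms in $E$, together with the strict positivity of $\int|u_\nu|^{p+1}$ along the sequence—both automatic from the standing hypotheses $p>1$ and $\Cal{I}_\beta<0$.
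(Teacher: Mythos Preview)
Your proof is correct and follows essentially the same scaling argument as the paper: rescale a minimizing sequence by $\sqrt{\sigma}$, compute the resulting energy, and use $\Cal{I}_\beta<0$ to ensure the $L^{p+1}$ term stays bounded away from zero. Your version is slightly more explicit, giving the quantitative lower bound $\tfrac{1}{p+1}\int|u_\nu|^{p+1}\geq |\Cal{I}_\beta|/2$ directly from the definition of $E$, whereas the paper simply refers back to the vanishing argument; also note your exponent $\sigma^{(p+1)/2}$ is the correct one (the paper's displayed $\sigma^{p+1}$ is a typo, harmless for the conclusion).
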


\begin{proof}
Let $u_\nu$ be a minimizing sequence as in (\ref{3.0.9}).  Define
\begin{eqnarray}
w_\nu = \sigma^{1/2} u_\nu,\quad \text{so }\
\| w_\nu \|_{L^2}^2 = \sigma \beta.
\end{eqnarray}
Hence,
\ben
\aligned
E(w_\nu)
&=\frac{\sigma}{2}\|\nabla u_\nu\|^2_{L^2}
-\frac{\sigma^{p+1}}{p+1} \int_M |u_\nu|^{p+1}\, dV \\
&= \sigma E( u_\nu) - \frac{\sigma^{p+1} - \sigma}{p+1}
\| u_\nu \|_{L^{p+1}}^{p+1}.
\endaligned
\een
Passing to the limit gives
\begin{eqnarray}
\Cal{I}_{\sigma \beta} \leq \sigma \Cal{I}_{\beta}.
\end{eqnarray}
However, given $\Cal{I}_\beta<0$, 
as in the proof of Proposition \ref{P:van}, $\| u_\nu \|_{L^{p+1}} $
does not approach $0$ and the result follows.
\end{proof}

Then, we have the following result similar to Proposition \ref{P:subadd}. 

\begin{proposition}
\label{P:subadd1}
Given $0 < \eta < \beta$ and $\Cal{I}_\beta < 0$, we have
\begin{eqnarray}
\Cal{I}_\beta < \Cal{I}_{\beta - \eta} + \Cal{I}_\eta.
\end{eqnarray}
\end{proposition}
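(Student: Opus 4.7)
The plan is to bootstrap directly from the strict scaling inequality of Proposition \ref{P:subadd0}. Rewriting that proposition, whenever $\Cal{I}_t < 0$ and $s > t$ one has
\[
\Cal{I}_s < \tfrac{s}{t}\,\Cal{I}_t, \qquad \text{equivalently} \qquad \tfrac{t}{s}\,\Cal{I}_s < \Cal{I}_t.
\]
I would then prove the claimed strict subadditivity by a short case split on the signs of $\Cal{I}_\eta$ and $\Cal{I}_{\beta-\eta}$, which is the only delicate point since the hypothesis $\Cal{I}_\beta<0$ does not a priori propagate negativity to the two pieces.

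In the first case, if both $\Cal{I}_\eta\ge 0$ and $\Cal{I}_{\beta-\eta}\ge 0$, then $\Cal{I}_\eta+\Cal{I}_{\beta-\eta}\ge 0 > \Cal{I}_\beta$ is immediate. In the second case, if $\Cal{I}_\eta<0$ and $\Cal{I}_{\beta-\eta}<0$, I would apply Proposition \ref{P:subadd0} twice, with $(t,s)=(\eta,\beta)$ and with $(t,s)=(\beta-\eta,\beta)$, to obtain
\[
\tfrac{\eta}{\beta}\,\Cal{I}_\beta < \Cal{I}_\eta, \qquad \tfrac{\beta-\eta}{\beta}\,\Cal{I}_\beta < \Cal{I}_{\beta-\eta},
\]
and add these to conclude $\Cal{I}_\beta < \Cal{I}_\eta+\Cal{I}_{\beta-\eta}$.

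The remaining mixed case, say $\Cal{I}_\eta\ge 0$ and $\Cal{I}_{\beta-\eta}<0$ (the symmetric subcase being identical), is handled by invoking Proposition \ref{P:subadd0} only at the negative parameter $\beta-\eta$: it yields $\Cal{I}_\beta < \tfrac{\beta}{\beta-\eta}\Cal{I}_{\beta-\eta}$, and since $\beta/(\beta-\eta)>1$ while $\Cal{I}_{\beta-\eta}<0$, the right-hand side is strictly less than $\Cal{I}_{\beta-\eta}$, hence less than $\Cal{I}_\eta+\Cal{I}_{\beta-\eta}$. The main (mild) obstacle is precisely recognizing that Proposition \ref{P:subadd0} must be applied only at parameters where its negativity hypothesis holds; the case decomposition above is engineered so that this is automatic, and no separate argument for monotonicity of $\beta\mapsto \Cal{I}_\beta$ is required.
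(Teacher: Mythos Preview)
Your proof is correct. Both you and the paper reduce everything to Proposition~\ref{P:subadd0}, but the organization differs. The paper assumes without loss of generality that $\eta\le\beta-\eta$, writes $\beta-\eta=\sigma\eta$ with $\sigma\ge 1$, and chains
\[
\Cal{I}_\beta=\Cal{I}_{(\sigma+1)\eta}<\tfrac{\sigma+1}{\sigma}\,\Cal{I}_{\sigma\eta}
=\Cal{I}_{\beta-\eta}+\tfrac{1}{\sigma}\Cal{I}_{\sigma\eta}
\le \Cal{I}_{\beta-\eta}+\Cal{I}_\eta,
\]
using the non-strict scaling bound $\Cal{I}_{\sigma\eta}\le\sigma\Cal{I}_\eta$ in the last step. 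Note that the strict inequality here invokes Proposition~\ref{P:subadd0} at the base point $\sigma\eta=\beta-\eta$, which tacitly requires $\Cal{I}_{\beta-\eta}<0$; the paper does not isolate this. Your case split on the signs of $\Cal{I}_\eta$ and $\Cal{I}_{\beta-\eta}$ handles precisely this point: you only apply the strict scaling inequality at parameters where the negativity hypothesis is known, and dispose of the remaining cases trivially. So your argument is slightly longer but more explicitly self-contained; the paper's is more streamlined but leans on an unstated (though easily supplied) observation.
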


\begin{proof}
Let us assume without loss of generality that $\eta \leq \beta - \eta$ and
take
\begin{eqnarray}
\beta - \eta = \sigma \eta
\end{eqnarray}
with $\sigma \geq 1$.
Hence, using Proposition \ref{P:subadd0} we have
\begin{eqnarray}
\Cal{I}_{\sigma \eta} \leq \sigma \Cal{I}_\eta, \
\Cal{I}_\beta = \Cal{I}_{(\sigma + 1) \eta} <
\frac{ \sigma +1 }{\sigma} \Cal{I}_{\sigma \eta} = \frac{ \sigma +1 }{\sigma}
\Cal{I}_{\beta-\eta}.
\end{eqnarray}
As a result,
\ben
\aligned
\Cal{I}_\beta = \Cal{I}_{ (\sigma + 1) \eta} & < \frac{ \sigma +1 }{\sigma}
\Cal{I}_{\sigma \eta} \\
& = \Cal{I}_{\beta - \eta} + \frac{1}{\sigma} \Cal{I}_{\sigma \eta} \\
& \leq \Cal{I}_{\beta - \eta} + \Cal{I}_\eta.
\endaligned
\een
\end{proof}
Applying this proposition in the same way as in Section \ref{S:conc},
we have the result that no splitting can occur for the sequence
$\{ u_\nu \}$.

Therefore, upon passing to a subsequence,
we have concentration.  There exist $y_\nu\in M$ (independent
of $\epsilon$) with the following property.  For each $\epsilon>0$, there
exists $\widetilde{R}(\epsilon)<\infty$ such that
\ben
\int_{B_{\widetilde{R}(\epsilon)}(y_\nu)} |u_\nu|^2\, dV>\beta-\epsilon.
\label{3.1.16}
\een

\subsection{Existence of energy minimizers}\label{s3.2}

As long as
\ben
\Cal{I}_{\beta}<0,
\label{4.0}
\een
we are left with the situation where $u_\nu\in H^1(M)$ 
satisfies (\ref{3.0.9})
and the concentration phenomenon (\ref{3.1.16}).
If $M$ is weakly homogeneous, we can translate the points $y_\nu$ to a subset
of some compact $K\subset M$.  We relabel the associated translates of
$u_\nu$ as $u_\nu$.  Passing to a subsequence, we have
\ben
u_\nu\longrightarrow u,\quad \text{weak}^*\ \text{in}\ H^1(M).
\label{4.1}
\een
By Rellich compactness, $u_\nu\rightarrow u$ in $L^2(B)$, in norm, for each
bounded $B\subset M$.  Hence, by (\ref{3.1.16})
\ben
\|u\|^2_{L^2}=\beta.
\label{4.2}
\een
Hence,
\ben
u_\nu\longrightarrow u\quad \text{in }\ L^2(M)\ \text{norm}.
\label{4.3}
\een
Now, as in (\ref{3.0.5}), we have
\ben
\|u-u_\nu\|_{L^{p+1}}\le C\|u-u_\nu\|_{L^2}^{1-\gamma}
\|u-u_\nu\|_{H^1}^\gamma,
\label{4.4}
\een
so
\ben
u_\nu\longrightarrow u\ \text{ in }\ L^{p+1}(M)\ \text{norm}.
\label{4.5}
\een
Now
\ben
\frac{1}{2}\|\nabla u_\nu\|^2_{L^2}-\frac{1}{p+1}\|u_\nu\|^{p+1}_{L^{p+1}}
\longrightarrow \Cal{I}_\beta,
\label{4.6}
\een
and
\ben
\|u_\nu\|^{p+1}_{L^{p+1}}\longrightarrow \|u\|^{p+1}_{L^{p+1}}.
\label{4.7}
\een
Also, since (\ref{4.2}) holds,
\ben
\frac{1}{2}\|\nabla u\|^2_{L^2}-\frac{1}{p+1}\|u\|^{p+1}_{L^{p+1}}
\ge \Cal{I}_\beta.
\label{4.8}
\een
Hence
\ben
\|\nabla u\|^2_{L^2}\ge \liminf \|\nabla u_\nu\|^2_{L^2},
\label{4.9}
\een
so
\ben
\nabla u_\nu\longrightarrow \nabla u\ \text{ in }\ L^2(M)\ \text{norm,}
\label{4.10}
\een
and $u$ minimizes $E(u)$ subject to the constraint (\ref{3.0.3}), at least
provided (\ref{4.0}) holds.

Here is one basic case where the hypothesis (\ref{4.0}) can be removed.

\begin{proposition}\label{p3.2.1}
Let $M$ be a compact $n$-dimensional Riemannian manifold, possibly with
boundary, and assume $p$ satisfies (\ref{E:subcrit}).  Then, given
$\beta>0$, there exists $u\in H^1(M)$ such that $Q(u)=\beta$ and
$E(u)=\Cal{I}_\beta$.
\end{proposition}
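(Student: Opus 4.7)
The plan is to follow the argument of \S{\ref{s3.1}}--\S{\ref{s3.2}}, but replace the concentration-compactness step by a direct appeal to Rellich's theorem, which is globally available on a compact manifold. First I would verify that $\Cal{I}_\beta > -\infty$: the Gagliardo-Nirenberg inequality (\ref{3.0.5}) holds on a compact $M$ (with constant depending on $M$), so the bound (\ref{E:EQbds}) gives, for any $u$ with $Q(u) = \beta$,
$$\|u\|_{H^1}^2 \leq E(u) + \tilde{C}\,\beta^{(p+1)(1-\gamma)/2}\,\|u\|_{H^1}^{\gamma(p+1)} + \beta.$$
Since $\gamma(p+1) < 2$ under (\ref{E:subcrit}), this yields an upper bound on $\|u\|_{H^1}$ in terms of $E(u)$, and hence a lower bound on $E(u)$. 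Thus $\Cal{I}_\beta$ is a finite real number.

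Next I would take a minimizing sequence $\{u_\nu\} \subset H^1(M)$ with $Q(u_\nu) = \beta$ and $E(u_\nu) \leq \Cal{I}_\beta + 1/\nu$. The a priori bound just established shows that $\{u_\nu\}$ is bounded in $H^1(M)$, so after extracting a subsequence we have $u_\nu \to u$ weakly in $H^1(M)$. Since $M$ is compact, Rellich's theorem furnishes compact embeddings $H^1(M) \hookrightarrow L^2(M)$ and $H^1(M) \hookrightarrow L^{p+1}(M)$ (the latter using $p+1 < 2n/(n-2)$ if $n \geq 3$, and all finite exponents if $n = 2$). Extracting a further subsequence, $u_\nu \to u$ in norm in both $L^2(M)$ and $L^{p+1}(M)$.

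Norm convergence in $L^2$ gives $Q(u) = \beta$, and norm convergence in $L^{p+1}$ gives $\|u_\nu\|_{L^{p+1}}^{p+1} \to \|u\|_{L^{p+1}}^{p+1}$. Weak lower semicontinuity of $\|\nabla \cdot\|_{L^2}^2$ yields
$$\|\nabla u\|_{L^2}^2 \leq \liminf_{\nu \to \infty} \|\nabla u_\nu\|_{L^2}^2,$$
so that
$$E(u) = \tfrac{1}{2}\|\nabla u\|_{L^2}^2 - \tfrac{1}{p+1}\|u\|_{L^{p+1}}^{p+1} \leq \liminf_{\nu \to \infty} E(u_\nu) = \Cal{I}_\beta.$$
Since $Q(u) = \beta$ forces $E(u) \geq \Cal{I}_\beta$ by definition of the infimum, we get $E(u) = \Cal{I}_\beta$, and $u$ is the desired minimizer.

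There is no genuine obstacle here: the technical core of \S{\ref{s3.1}}--\S{\ref{s3.2}}, the concentration-compactness argument needed to rule out vanishing and splitting, is simply unnecessary when $M$ is compact, because Rellich's theorem provides full compactness of $H^1(M) \hookrightarrow L^{p+1}(M)$ directly. Consequently neither the hypothesis $\Cal{I}_\beta < 0$ nor any weak homogeneity assumption is required in this setting.
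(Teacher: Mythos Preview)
Your proof is correct and follows essentially the same approach as the paper's: take a minimizing sequence bounded in $H^1(M)$, pass to a weak limit, and use Rellich compactness on the compact manifold to get strong convergence in both $L^2$ and $L^{p+1}$, then conclude via weak lower semicontinuity of $\|\nabla\cdot\|_{L^2}^2$. The paper's version is slightly more terse (the finiteness of $\Cal{I}_\beta$ having already been established earlier in the section), but the argument is the same.
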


\begin{proof}
Taking $u_\nu$ as in (\ref{3.0.9}), we have a bound on $\|u_\nu\|_{H^1}$.
If $u_\nu\rightarrow u$ $\text{weak}^*$ in $H^1(M)$, the Rellich compactness
theorem yields $u_\nu\rightarrow u$ in norm in both $L^2(M)$ and
$L^{p+1}(M)$.  Hence $Q(u)=\beta$ and $\|u\|_{L^{p+1}}=\lim\,
\|u_\nu\|_{L^{p+1}}$, hence $E(u)\le \lim\, E(u_\nu)$.  This implies
$E(u)=\Cal{I}_\beta$ (and also $u_\nu\rightarrow u$ in $H^1$-norm).
\end{proof}

\subsection{Manifolds with no energy minimizers}\label{s3.3}

Here we show that the manifolds with no $F_\lambda$-minimizers exhibited
in \S{\ref{s2.a}} also have no energy minimizers.
Since the arguments are similar,
we will be brief.  Let $M=\Rr^n\setminus K$, where $K\subset\Rr^n$ is a
smoothly bounded, compact set. We impose the Dirichlet boundary condition
on $\pa M$, take $\beta>0$, and ask whether we can minimize $E(u)$, given
\ben
u\in H^1_0(M),\quad Q(u)=\beta.
\een
We will show that no such minimizer exists.  To see this, set
\ben
\Cal{I}_\beta(M)=\inf\, \{E(u):u\in H^1_0(M),\, Q(u)=\beta\}.
\een
It is clear that $\Cal{I}_\beta(M)\ge \Cal{I}_\beta(\Rr^n)$, since
$H^1_0(M)\subset H^1(\Rr^n)$.  An argument similar to that in \S{\ref{s2.a}}
yields the reverse inequality, so
\ben
\Cal{I}_\beta(M)=\Cal{I}_\beta(\Rr^n).
\een
The positivity results of \S{\ref{s2.4}} apply to this setting
(for more on this, see \S{\ref{s3.4}}).
The proof of Proposition \ref{p2a.1} is hence readily modified, to yield:

\begin{proposition}\label{p3.3.1}
If $M=\Rr^n\setminus K$, there does not exist $u\in H^1_0(M)$ such that
$Q(u)=\beta$ and $E(u)=\Cal{I}_\beta(M)$.
\end{proposition}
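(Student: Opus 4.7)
The plan is to follow the template of Proposition \ref{p2a.1} almost verbatim, with $F_\lambda$ and $J_p$ replaced by $E$ and $Q$. First I would establish the identity
\ben
\Cal{I}_\beta(M) = \Cal{I}_\beta(\Rr^n)
\een
asserted just before the proposition. The inequality $\Cal{I}_\beta(M) \ge \Cal{I}_\beta(\Rr^n)$ is immediate from $H^1_0(M) \subset H^1(\Rr^n)$ (extend by zero off $M$). For the reverse, given $\ep>0$, pick $\tilde u_\ep \in H^1(\Rr^n)$ with $Q(\tilde u_\ep)=\beta$ and $E(\tilde u_\ep) \le \Cal{I}_\beta(\Rr^n) + \ep/2$, translate it so that its mass concentrates very far from $K$, and then multiply by a smooth cutoff $\chi$ vanishing near $K$ and equal to $1$ outside a slightly larger neighborhood of $K$. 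The product $w = \chi\, \tilde u_\ep$ lies in $H^1_0(M)$ with $\|w\|_{L^2}^2 = \beta-\delta$ for some small $\delta>0$. Rescaling by $\sqrt{\beta/(\beta-\delta)}$ restores the mass constraint while perturbing the gradient and $L^{p+1}$ terms by $O(\delta)$; choosing the translation distance large enough (and $\tilde u_\ep$ with suitable decay, which exists since $E$-minimizers or near-minimizers on $\Rr^n$ lie in $H^1$) makes $\delta$ as small as desired, yielding $E(u_\ep) \le \Cal{I}_\beta(\Rr^n)+\ep$ and hence $\Cal{I}_\beta(M) \le \Cal{I}_\beta(\Rr^n)$.

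Next, suppose toward contradiction that $u \in H^1_0(M)$ achieves $Q(u)=\beta$ and $E(u) = \Cal{I}_\beta(M)$. By replacing $u$ with $|u|$ (which preserves $Q$, preserves $\|\nabla u\|_{L^2}$, and preserves $\|u\|_{L^{p+1}}$), I may assume $u \ge 0$ on $M$. Define $v$ on $\Rr^n$ by $v = u$ on $M$ and $v = 0$ on $K$. Since $u \in H^1_0(M)$, this zero extension lies in $H^1(\Rr^n)$ and satisfies
\ben
Q(v) = \beta, \quad E(v) = E(u) = \Cal{I}_\beta(M) = \Cal{I}_\beta(\Rr^n),
\een
so $v$ is an energy minimizer on $\Rr^n$ subject to the mass constraint.

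To finish, I would invoke the positivity result of \S{\ref{s2.4}}, as forecast in \S{\ref{s3.4}}. An energy minimizer on the weakly homogeneous space $\Rr^n$ satisfies the Euler--Lagrange equation $-\Delta v + \lambda v = |v|^{p-1}v$ for some $\lambda \in \Rr$, so Proposition \ref{P:positivity} (or rather its direct analogue for energy minimizers, which rests only on the Harnack inequality from \cite{GiTr-book} applied to this same equation) shows that every nonnegative, not-identically-zero solution is strictly positive everywhere. Thus $v>0$ on $\Rr^n$, contradicting $v\equiv 0$ on $K$, and no minimizer can exist.

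The main obstacle, as in \S{\ref{s2.a}}, is the reverse inequality $\Cal{I}_\beta(M) \le \Cal{I}_\beta(\Rr^n)$: one must verify that the cutoff-plus-rescaling perturbation really does not raise the energy by more than an arbitrary $\ep$. Once this is in hand, the contradiction via zero extension and strict positivity is essentially automatic. A minor point worth checking is that the positivity statement of \S{\ref{s2.4}} transfers cleanly to energy minimizers; this amounts to observing that their Euler--Lagrange equation (\ref{1.2.3}) is of exactly the form treated there, with $\lambda$ determined as a Lagrange multiplier rather than prescribed, so the Harnack argument applies without modification.
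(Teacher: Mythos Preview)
Your proposal is correct and follows essentially the same approach as the paper: establish $\Cal{I}_\beta(M)=\Cal{I}_\beta(\Rr^n)$ by the translation-and-cutoff argument of \S{\ref{s2.a}} (with the mass-restoring rescaling you note), then extend a hypothetical nonnegative minimizer by zero to obtain an energy minimizer on $\Rr^n$, and derive a contradiction from the strict positivity results of \S{\ref{s2.4}} as transferred to energy minimizers in \S{\ref{s3.4}}. The paper's own proof is in fact just a one-line pointer back to Proposition~\ref{p2a.1}, so your write-up is more detailed than, but entirely in line with, what the authors intend.
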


As in \S{\ref{s2.a}}, we can replace $\Rr^n$ by a general noncompact,
connected, weakly homogeneous space and get a similar nonexistence result.

\subsection{Smoothness, positivity, and decay of energy minimizers}
\label{s3.4}

In this brief section, we make note of how results of
\S\S{\ref{s2.3}--\ref{s2.5}} apply to energy minimizers.  We return to the
setting where $M$ is a weakly homogeneous.
As was done in \S\S{\ref{s2.3}--\ref{s2.5}}, in this section we assume,
for the sake of simplicity, that $\pa M=\emptyset$.

If $u\in H^1(M)$ minimizes (\ref{3.0.2}), subject to the constraint
(\ref{3.0.3}), so does $v=|u|$, so $v$ solves
\ben
v\in H^1(M),\ v\ge 0,\quad -\Delta v+\lambda v-|v|^{p-1}v=0,
\een
for some $\lambda\in\Rr$.  Boundedness (and decay) results of
\S{\ref{s2.3}} hold.  Then, as in \S{\ref{s2.4}}, the Harnack inequality
(\ref{2.4.4}) impliies $v>0$ on $M$, and we get:

\begin{proposition}\label{p3.4.1}
Every real-valued energy minimizer $u$ satisfies either $u>0$ on $M$
or $u<0$ on $M$, and belongs to $\Cal{C}^\infty(M)$.
\end{proposition}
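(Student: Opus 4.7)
The plan is to follow the template set by \S{\ref{s2.4}} for $F_\lambda$-minimizers essentially verbatim, since the only distinction is that the multiplier $\lambda$ now arises as a Lagrange multiplier from the mass constraint rather than being prescribed. Let $u \in H^1(M)$ be a real-valued energy minimizer with $Q(u) = \beta$, and set $v = |u|$. Because $\|\nabla v\|_{L^2} = \|\nabla u\|_{L^2}$, $\|v\|_{L^2} = \|u\|_{L^2}$, and $\|v\|_{L^{p+1}} = \|u\|_{L^{p+1}}$, $v$ is also an energy minimizer, hence (by (\ref{1.2.3})) satisfies
\[
v \in H^1(M),\quad v \ge 0,\quad -\Delta v + \lambda v = v^{p}
\]
for some $\lambda \in \Rr$, which we regard as fixed from now on.

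The second step is to apply the local elliptic regularity of Proposition \ref{P:local-reg-prop} to this equation. Since that bootstrap is purely local and does not depend on the sign of $\lambda$, we conclude $v \in C^{2+p}(M)$ (with the usual modifications when $p$ is an integer). In particular, $v^{p-1} \in L^\infty_{\text{loc}}(M)$, so that the rearranged equation
\[
-\Delta v + (\lambda - v^{p-1}) v = 0
\]
has locally bounded zeroth-order coefficient. Then the Harnack inequality (\ref{2.4.4}), quoted from Theorem 8.20 of \cite{GiTr-book}, applies uniformly on the translated sets $\Omega_j^{(1)}$ furnished by the weak homogeneity of $M$. Exactly as in the proof of Proposition \ref{p2.4.1}, this rules out the vanishing of $v$ at any point: since $Q(v) = \beta > 0$ forces $v \not\equiv 0$, the Harnack inequality yields $v > 0$ everywhere on $M$.

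With $|u| = v > 0$ established everywhere, the nonlinearity $|u|^{p-1}u$ becomes a smooth function of $u$ in a neighborhood of each point (the singularity of $s \mapsto |s|^{p-1}s$ at $s=0$ is avoided), so iterating the linear elliptic estimates in (\ref{E:local-ell-reg}) as in the proof of Proposition \ref{p2.4.4} gives $u \in C^\infty(M)$. Finally, since $u$ is real-valued, continuous, and nowhere zero on the connected manifold $M$, $u$ must have constant sign, proving the dichotomy $u>0$ or $u<0$.

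The only potentially delicate point — and really the sole place where one might worry — is that in \S\S{\ref{s2.3}}--{\ref{s2.4}} the assumption $\lambda > -\delta_0$ was used globally (via $F_\lambda \simeq \|\cdot\|_{H^1}^2$), whereas the Lagrange multiplier $\lambda$ produced here carries no a priori sign. However, this hypothesis plays no role in either the local bootstrap of Proposition \ref{P:local-reg-prop} or the Harnack inequality (\ref{2.4.4}), which are the only two ingredients we actually invoke; the global $H^1$ control on $v$ is supplied directly by the fact that $v$ is an energy minimizer. Thus the argument goes through with no essential modification.
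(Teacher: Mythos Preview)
Your argument is correct and follows the paper's own approach essentially verbatim: pass to $v=|u|$, invoke the local regularity of Proposition~\ref{P:local-reg-prop}, then the Harnack inequality (\ref{2.4.4}) to get $v>0$, and bootstrap to $C^\infty$. Your added remark that the sign of the Lagrange multiplier $\lambda$ is immaterial for these purely local steps is a helpful clarification that the paper leaves implicit.
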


Given this, the decay results Proposition \ref{p2.4.4} and Proposition
\ref{p2.5.1} apply to these energy minimizers.

\subsection{Second variation of energy}\label{s3.5}

With $\beta\in (0,\infty)$, let
\ben
X=\{u\in H^1(M):Q(u)=\beta\},\quad
\Cal{I}_\beta=\inf\,\{E(u):u\in X\},
\label{3.5.1}
\een
and
\ben
Y=\{u\in X:E(u)=\Cal{I}_\beta\}.
\label{3.5.2}
\een
Conditions guaranteeing that $Y$ is nonempty have been given in \S{\ref{s3.2}}.
Recall that $E(u)$ and $Q(u)$ are given by (\ref{3.0.2})--(\ref{3.0.3}).
Here we study
\ben
\frac{d^2}{ds^2}E(w(s)),
\label{3.5.4}
\een
when $w(s)$ is a smooth path in $X$ satisfying
$w(0)=u\in Y$.
To be definite, take $u\in Y$,
\ben
\psi\in T_uX=\{\psi\in H^1(M):\Re (u,\psi)=0\},
\label{3.5.6}
\een
and set
\ben
w(s)=a\, \frac{u+s\psi}{\|u+s\psi\|},\quad a=\beta^{1/2}.
\label{3.5.7}
\een
In light of the discussion in \S{\ref{s3.4}}, we can assume
\ben
u>0\ \text{ on }\ M,
\label{3.5.8}
\een
but we cannot assume $\psi$ is real valued.  Set
\ben
\psi=\psi_0+i\psi_1,\quad \psi_0,\psi_1\ \text{ real valued}.
\label{3.5.9}
\een
Then the condition (\ref{3.5.6}) is equivalent to
\ben
(u,\psi_0)=0,
\label{3.5.10}
\een
with no constraint on $\psi_1$.

The chain rule gives
\ben
\frac{d}{ds}E(w(s))=DE(w(s))w'(s),
\label{3.5.11}
\een
and in particular
\ben
\frac{d}{ds}E(w(s))\Bigr|_{s=0}=DE(u)w'(0).
\label{3.5.12}
\een
Differentiating (\ref{3.5.11}) gives
\ben
\frac{d^2}{ds^2}E(w(s))=D^2E(w(s))(w'(s),w'(s))+DE(w(s))w''(s),
\label{3.5.13}
\een
and in particular
\ben
\frac{d^2}{ds^2}E(w(s))\Bigr|_{s=0}=D^2E(u)(w'(0),w'(0))+DE(u)w''(0).
\label{3.5.14}
\een

We turn to the computation of $w'(0)$ and $w''(0)$.  Rewrite (\ref{3.5.7}) as
\ben
\aligned
w(s)&=aF(s)(u+s\psi), \\
F(s)&=\|u+s\psi\|^{-1}
=(a^2+\|\psi\|^2)^{-1/2}.
\endaligned
\label{3.5.15}
\een
Then
\ben
\aligned
F'(s)&=-s(a^2+s^2\|\psi\|^2)^{-3/2}\|\psi\|^2, \\
F''(s)&=-(a^2+s^2\|\psi\|^2)^{-3/2}\|\psi\|^2
-s\|\psi\|^2\, \frac{d}{ds}(a^2+s^2\|\psi\|^2)^{-3/2},
\endaligned
\label{3.5.16}
\een
so
\ben
F'(0)=0,\quad F''(0)=-a^{-3}\|\psi\|^2.
\label{3.5.17}
\een
We have
\ben
\aligned
w'(s)&=aF(s)\psi+aF'(s)(u+s\psi),\quad \text{so } \\
w'(0)&=aF(0)\psi=\psi,
\endaligned
\label{3.5.18}
\een
and
\ben
\aligned
w''(s)&=2aF'(s)\psi+aF''(s)u,\quad \text{so } \\
w''(0)&=aF''(0)u=-\frac{\|\psi\|^2}{a^2}u.
\endaligned
\label{3.5.20}
\een
Thus (\ref{3.5.12}) and (\ref{3.5.14}) become
\ben
\frac{d}{ds}E(w(s))\Bigr|_{s=0}=DE(u)\psi,
\label{3.5.22}
\een
and
\ben
\frac{d^2}{ds^2}E(w(s))\Bigr|_{s=0}=D^2E(u)(\psi,\psi)-\frac{1}{a^2}
\|\psi\|^2DE(u)u.
\label{3.5.23}
\een
Also
\ben
DE(u)\psi=\frac{d}{ds}E(u+s\psi)\Bigr|_{s=0},
\label{3.5.24}
\een
and
\ben
D^2E(u)(\psi,\psi)=\frac{d^2}{ds^2}E(u+s\psi)\Bigr|_{s=0}.
\label{3.5.25}
\een

Our next task is to compute the right sides of (\ref{3.5.24}) and
(\ref{3.5.25}).  It is convenient to set
\ben
E(u)=T(u)-\tJ(u),
\label{3.5.25A}
\een
with
\ben
T(u)=\frac{1}{2}\|\nabla u\|^2_{L^2},\quad
\tJ(u)=\frac{1}{p+1}\|u\|^{p+1}_{L^{p+1}}.
\label{3.5.25B}
\een
First, the calculation
\ben
\aligned
T(s+s\psi)&=\frac{1}{2}\|\nabla u+s\nabla\psi\|^2 \\
&=\frac{1}{2}\|\nabla u\|^2+s\,\Re (\nabla u,\nabla\psi)+\frac{s^2}{2}
\|\nabla\psi\|^2
\endaligned
\label{3.5.26}
\een
gives
\ben
\frac{d}{ds}T(u+s\psi)\Bigr|_{s=0}=\Re (\nabla u,\nabla\psi)=\Re (-\Delta u,\psi),
\label{3.5.27}
\een
and
\ben
\frac{d^2}{ds^2}T(u+s\psi)\Bigr|_{s=0}=\|\nabla\psi\|^2=\|\nabla\psi_0\|^2
+\|\nabla\psi_1\|^2.
\label{3.5.28}
\een
Next, write
\ben
\tJ(u+s\psi)=\frac{1}{p+1}\int\limits_M (u+s\psi)^{(p+1)/2}(\ubar+s\psibar
)^{(p+1)/2}\, dV.
\label{3.5.29}
\een
Then
\ben
\aligned
\frac{d}{ds}\tJ(u+s\psi)=\frac{1}{2}\int\limits_M\Bigl[
&(u+s\psi)^{(p-1)/2}(\ubar+s\psibar)^{(p+1)/2}\psi \\
&+(u+s\psi)^{(p+1)/2}(\ubar+s\psibar)^{(p-1)/2}\psibar\Bigr]\,dV,
\endaligned
\label{3.5.30}
\een
and
\ben
\aligned
\frac{d^2}{ds^2}\tJ(u+s\psi)=\frac{1}{2}\int\limits_M \Bigl[
&\frac{p+1}{2}(u+s\psi)^{(p-1)/2}(\ubar+s\psibar)^{(p-1)/2}\psi\psibar \\
&+\frac{p-1}{2}(u+s\psi)^{(p-3)/2}(\ubar+s\psibar)^{(p+1)/2}\psi^2 \\
&+\frac{p+1}{2}(u+s\psi)^{(p-1)/2}(\ubar+s\psibar)^{(p-1)/2}\psi\psibar \\
&+\frac{p-1}{2}(u+s\psi)^{(p+1)/2}(\ubar+s\psibar)^{(p-3)/2}\psibar^2
\Bigr]\,dV.
\endaligned
\label{3.5.31}
\een
In particular,
\ben
\aligned
\frac{d}{ds}\tJ(u+s\psi)\Bigr|_{s=0}
&=\frac{1}{2}\int\limits_M |u|^{p-1}(\ubar\psi+u\psibar) \\
&=\Re (|u|^{p-1}u,\psi).
\endaligned
\label{3.5.32}
\een

Before evaluating (\ref{3.5.31}) at $s=0$, let us record that (\ref{3.5.27})
and (\ref{3.5.32}) imply
\ben
\frac{d}{ds}E(u+s\psi)\Bigr|_{s=0}=\Re (-\Delta u-|u|^{p-1}u,\psi).
\label{3.5.33}
\een
(This calculation does not use (\ref{3.5.8}).)
For $u\in Y$, i.e., a minimizer of $E|_X$,
this must vanish for all $\psi\in T_u X$, described by (\ref{3.5.6}).
Consequently, given $u\in Y$,
\ben
\psi\in H^1(M),\ \Re (u,\psi)=0\Longrightarrow \Re (\Delta u+|u|^{p-1}u,\psi)
=0.
\label{3.5.34}
\een
It follows that there exists $\lambda\in\Rr$ such that
\ben
\Delta u+|u|^{p-1}u=\lambda u,
\label{3.5.35}
\een
and we recover (\ref{1.2.2})--(\ref{1.2.3}).

We also note that the last term in (\ref{3.5.23}) is
\ben
\aligned
\frac{1}{a^2}\|\psi\|^2\frac{d}{ds}&E(u+su)\Bigr|_{s=0} \\
&=\frac{1}{a^2}\|\psi\|^2\, \Re(-\Delta u-|u|^{p-1}u,u) \\
&=\frac{1}{a^2}\|\psi\|^2(-\lambda u,u) \\
&=-\lambda \|\psi\|^2,
\endaligned
\label{3.5.36}
\een
the second identity by (\ref{3.5.35}).

We now evaluate (\ref{3.5.31}) at $s=0$.  For this, we will use (\ref{3.5.8}),
and write $\psi=\psi_0+i\psi_1$, as in (\ref{3.5.9}).  We have
\ben
\aligned
\frac{d^2}{ds^2}\tJ(u+s\psi)\Bigr|_{s=0}
&=\frac{1}{2}\int\limits_M \Bigl[(p+1)|u|^{p-1}|\psi|^2 \\
&\ \ \ \ \ \
+\frac{p-1}{2}u^{(p-3)/2}\ubar^{(p+1)/2}\psi^2 \\
&\ \ \ \ \ \
+\frac{p-1}{2}u^{(p+1)/2}\ubar^{(p-3)/2}\psibar^2\Bigr]\, dV \\
&=\frac{1}{2}((p+1)|u|^{p-1}\psi,\psi)
+\frac{p-1}{2}\Re (|u|^{p-3}u^2\psibar,\psi) \\
&=\frac{p+1}{2}(u^{p-1}\psi,\psi)+\frac{p-1}{2}\Re(u^{p-1}\psibar,\psi),
\endaligned
\label{3.5.37}
\een
the last identity by (\ref{3.5.8}).  Now
\ben
(u^{p-1}\psi,\psi)=(u^{p-1}\psi_0,\psi_0)+(u^{p-1}\psi_1,\psi_1),
\label{3.5.38}
\een
and
\ben
\aligned
\Re (u^{p-1}\psibar,\psi)&=\Re\int\limits_M u^{p-1}(\psi_0-i\psi_1)^2\, dV \\
&=(u^{p-1}\psi_0,\psi_0)-(u^{p-1}\psi_1,\psi_1),
\endaligned
\label{3.5.39}
\een
so
\ben
\aligned
\frac{d^2}{ds^2}\tJ(u+s\psi)\Bigr|_{s=0}
&=\frac{p+1}{2}(u^{p-1}\psi_0)+\frac{p-1}{2}(u^{p-1}\psi_0,\psi_0) \\
&\ \
+\frac{p+1}{2}(u^{p-1}\psi_1,\psi_1)-\frac{p-1}{2}(u^{p-1}\psi_1,\psi_1) \\
&=p(u^{p-1}\psi_0,\psi_0)+(u^{p-1}\psi_1,\psi_1) .
\endaligned
\label{3.5.40}
\een
Together with (\ref{3.5.28}), this gives
\ben
\aligned
\frac{d^2}{ds^2}E(u+s\psi)\Bigr|_{s=0}=\
&(-\Delta\psi_0-pu^{p-1}\psi_0,\psi_0) \\
&+(-\Delta \psi_1-u^{p-1}\psi_1,\psi_1).
\endaligned
\label{3.5.41}
\een
This, together with (\ref{3.5.23}) and (\ref{3.5.36}), yields
\ben
\aligned
\frac{d^2}{ds^2}E(w(s))\Bigr|_{s=0}=\
&((-\Delta-pu^{p-1})\psi_0,\psi_0)+\lambda(\psi_0,\psi_0) \\
&+((-\Delta-u^{p-1})\psi_1,\psi_1)+\lambda(\psi_1,\psi_1),
\endaligned
\label{3.5.42}
\een
when $w(s)$ is given by (\ref{3.5.7}).  In other words,
\ben
\frac{d^2}{ds^2}E(w(s))\Bigr|_{s=0}=(L_+\psi_0,\psi_0)+(L_-\psi_1,\psi_1),
\label{3.5.43}
\een
with $L_{\pm}:H^1(M)\rightarrow H^{-1}(M)$ given by
\ben
\aligned
L_+\psi_0&=(-\Delta+\lambda-p|u|^{p-1})\psi_0, \\
L_-\psi_1&=(-\Delta+\lambda-|u|^{p-1})\psi_1.
\endaligned
\label{3.5.44}
\een
The Friedrichs method defines $L_+$ and $L_-$ as self-adjoint operators
on $L^2(M)$, with domain $H^2(M)$.

\subsection{Second variation of $F_\lambda$}\label{s3.a}

In this setting, we take $p$ as in (\ref{E:p-range}), $J_p$ as in
(\ref{E:J-p}), and $F_\lambda$ as in (\ref{E:F-lambda}).
With $\beta\in (0,\infty)$, set
\ben
\wX=\{u\in H^1(M):J_p(u)=\beta\},\quad
I_\beta=\inf\, \{F_\lambda(u):u\in\wX\},
\label{3.A.1}
\een
and
\ben
\wY=\{u\in \wX:F_\lambda(u)=I_\beta\}.
\label{3.A.2}
\een
Conditions guaranteeing that $\wY$ is nonempty have been given in \S{\ref{s2}}.
Here we compute
\ben
\frac{d^2}{ds^2}F_\lambda(w(s)),
\label{3.A.3}
\een
where $w(s)$ is a smooth path in $\wX$ satisfying $w(0)=u\in\wY$.  To be
definite, take $u\in\wY$ (we can and will assume $u>0$), take
\ben
\psi\in T_u\wX=\{\psi\in H^1(M):\text{Re}\, (u^p,\psi)=0\},
\label{3.A.4}
\een
and set
\ben
w(s)=a\, \frac{u+s\psi}{\|u+s\psi\|_{L^{p+1}}},\quad
a=\|u\|_{L^{p+1}}=\beta^{1/(p+1)}.
\label{3.A.5}
\een
A calculation parallel to that done for $(d/ds)^2E(w(s))$ in \S{\ref{s3.5}},
which this time we leave to the reader, gives
\ben
\frac{1}{2}\, \frac{d^2}{ds^2} F_\lambda(w(s))\Bigr|_{s=0}
=(L_+\psi_0,\psi_0)+(L_-\psi_1,\psi_1),
\label{3.A.6}
\een
where $\psi=\psi_0+i\psi_1$, with $\psi_0,\psi_1$ real valued, and $L_{\pm}$
as in (\ref{3.5.44}), i.e.,
\ben
\aligned
L_+\psi_0&=(-\Delta+\lambda-p|u|^{p-1})\psi_0, \\
L_-\psi_1&=(-\Delta+\lambda-|u|^{p-1})\psi_1,
\endaligned
\label{3.A.7}
\een
provided a certain rescaling, described below, is performed.
In this case, the condition that $\psi\in H^1(M)$ belong to $T_u\wX$ becomes
\ben
(u^p,\psi_0)=0,
\label{3.A.8}
\een
with no further condition on $\psi_1$.  Contrast (\ref{3.A.8}) with
(\ref{3.5.10}).

We describe the rescaling of $u$ that yields (\ref{3.A.7}).  If $\wX$ is as in
(\ref{3.A.1}) and if $u\in \wY$ as in (\ref{3.A.2}) and is $\geq 0$, then there
exists $K\in\Rr$ such that such that
\ben
-\Delta u+\lambda u=Ku^p.
\label{3.A.9}
\een
Taking the inner product with $u$ yields
\ben
K=\beta^{-1} I_\beta.
\label{3.A.10}
\een
We can rescale, replacing $u$ by $\kappa u$,
to arrange that $K=1$ in (\ref{3.A.9}),
so, with a different $\beta$, the new $u$ minimizes $F_\lambda$ subject
to the constraint $J_p(u)=\beta$, and satisfies
\ben
-\Delta u+\lambda u-u^p=0.
\label{3.A.11}
\een
Cf.~Corollary \ref{c2.1.R}.  It is for this rescaled $u$ that
(\ref{3.A.6})--(\ref{3.A.7}) hold.

\subsection{Properties of $L_{\pm}$}\label{s3.6}

Throughout this subsection, $M$ will be a weakly homogeneous space.  We have
defined operators $L_+$ and $L_-$ in \S\S{\ref{s3.5}}--{\ref{s3.a}}, as
\ben
\aligned
L_+\psi_0&=(-\Delta+\lambda-p|u|^{p-1})\psi_0, \\
L_-\psi_1&=(-\Delta+\lambda-|u|^{p-1})\psi_1,
\endaligned
\label{3.6.0}
\een
arising when $u\in H^1(M)$ is either an energy minimizer of an
$F_\lambda$-monimizer, satisfying (\ref{3.A.11}).  We also assume $u>0$.
In light of decay results on $u$
established in \S{\ref{s2}}, these are self-adjoint operators on $L^2(M)$,
with domain $H^2(M)$.  In these respective cases, we have seen that,
with $w(s)$ respectively as in (\ref{3.5.7}) or (\ref{3.A.5}),
\ben
\frac{d^2}{ds^2}E(w(s))\Bigr|_{s=0}
=(L_+\psi_0,\psi_0)+(L_-\psi_1,\psi_1),
\label{3.6.1}
\een
\ben
\frac{d^2}{ds^2}F_\lambda(w(s))\Bigr|_{s=0}
=(L_+\psi_0,\psi_0)+(L_-\psi_1,\psi_1),
\label{3.6.2}
\een
with $\psi_j\in H^1(M)$ real valued, $\psi_1$ otherwise arbitrary, and
\ben
(u,\psi_0)&=0\quad \text{in case }\ (\ref{3.6.1}),
\label{3.6.3}
\een
\ben
(u^p,\psi_0)&=0\quad \text{in case }\ (\ref{3.6.2}).
\label{3.6.4}
\een
Since $u$ is a minimizer, we know that (\ref{3.6.1}) (resp., (\ref{3.6.2})) is
$\geq 0$ for all such paths $w(s)$.  We deduce that
\ben
(L_+\psi_0,\psi_0)\geq 0,
\label{3.6.5}
\een
for all real-valued $\psi_0\in H^1(M)$ satisfying (\ref{3.6.3}) when $u$ is
an energy minimizer, and for all $\psi_0$ satisfying (\ref{3.6.4}) when $u$
is an $F_\lambda$-minimizer (satisfying (\ref{3.A.11})).  Also, in both cases,
\ben
(L_-\psi_1,\psi_1)\geq 0,
\label{3.6.6}
\een
for all real valued $\psi_1\in H^1(M)$.  Since $L_+$ and $L_-$ are
reality preserving, these results extend readily to the case where $\psi_0$
and $\psi_1$ are allowed to be complex valued.

As we have seen, if $u\in H^1(M)$ minimizes $E(u)$, subject to the
constraint $\|u\|_{L^2}^2=\beta$, then there exists $\lambda\in\Rr$ such that
\ben
\Delta u-\lambda u+|u|^{p-1}u=0.
\label{3.6.7}
\een
(This is also the PDE satisfied by the rescaled $F_\lambda$-minimizer,
discussed in \S{\ref{s3.a}}.)  From (\ref{3.6.6}), we have the following
information about
$\lambda$.

\begin{proposition}\label{p3.6.1}
If $u$ is an energy minimizer satisfying (\ref{3.6.7}),
then
\ben
\Spec (-\Delta+\lambda)\subset [0,\infty).
\label{3.6.8}
\een
\end{proposition}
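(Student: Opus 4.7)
The plan is to deduce the spectral bound directly from the nonnegativity of $L_-$ that was established just above the statement. Specifically, from (\ref{3.6.6}) (together with the remark that this extends to complex-valued test functions since $L_-$ preserves reality), we have
\ben
(L_- \psi, \psi) \geq 0 \quad \text{for all } \psi \in H^1(M).
\een
By the definition of $L_-$ in (\ref{3.6.0}), this is the statement
\ben
((-\Delta + \lambda)\psi, \psi) \geq (|u|^{p-1} \psi, \psi) \quad \text{for all } \psi \in H^1(M).
\een

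The key observation is that $|u|^{p-1}$ is a nonnegative function on $M$ (we even have $u > 0$ by Proposition \ref{p3.4.1}), so multiplication by $|u|^{p-1}$ is a nonnegative operator on $L^2(M)$. Hence $(|u|^{p-1}\psi, \psi) \geq 0$ for every $\psi \in H^1(M)$, and therefore
\ben
((-\Delta + \lambda)\psi, \psi) \geq 0 \quad \text{for all } \psi \in H^1(M).
\een
Since $H^1(M)$ is the form domain of the self-adjoint operator $-\Delta + \lambda$ on $L^2(M)$, this quadratic form bound is equivalent to the spectral inclusion (\ref{3.6.8}).

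The argument is essentially a one-line consequence of writing $-\Delta + \lambda = L_- + |u|^{p-1}$ as a sum of two nonnegative self-adjoint operators, so there is no real obstacle once the second variation calculations of \S{\ref{s3.5}}--\S{\ref{s3.a}} and the positivity results of \S{\ref{s3.4}} are in hand. The only point worth noting carefully is that one must use the extension of (\ref{3.6.6}) from real-valued to complex-valued $\psi_1$ (justified by the reality-preserving property of $L_-$), since the operator $-\Delta + \lambda$ acts on complex-valued $L^2$.
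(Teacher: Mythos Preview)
Your proof is correct and is essentially identical to the paper's own argument: both write $-\Delta+\lambda=L_-+|u|^{p-1}$ and use the nonnegativity of $L_-$ together with $|u|^{p-1}\ge 0$ to conclude $((-\Delta+\lambda)\psi,\psi)\ge 0$ for all $\psi\in H^1(M)$.
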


\begin{proof}
In fact, for all $\psi\in H^1(M)$,
\ben
\aligned
((-\Delta+\lambda)\psi,\psi)&=(L_-\psi,\psi)+(|u|^{p-1}\psi,\psi) \\
&\geq (|u|^{p-1}\psi, \psi) \\
&\geq 0.
\endaligned
\label{3.6.9}
\een
\end{proof}

By contrast with the positivity of $L_-$, note that
\ben
\aligned
(L_+ u,u)&=-(\Delta u-\lambda u+p|u|^{p-1}u,u) \\
&=-(p-1)\int_M |u|^{p+1}\, dV \\
&<0,
\endaligned
\label{3.6.9A}
\een
the second identity by (\ref{3.6.7}) if $u$ is an energy minimizer and by
(\ref{3.A.11}) if $u$ is an (appropriately rescaled) $F_\lambda$-minimizer.
This result, together with (\ref{3.6.5}), implies:

\begin{proposition}\label{p3.6.2}
If $u$ is either an energy minimizer or an
$F_\lambda$-minimizer, satisfying (\ref{3.A.11}), then $L_+$ has exactly one
negative eigenvalue.
\end{proposition}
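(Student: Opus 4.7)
The plan is to combine the already-established lower bound $(L_+ u, u) < 0$ from (3.6.9A), which guarantees at least one negative eigenvalue, with the constrained nonnegativity (3.6.5) to rule out two or more negative eigenvalues. The key preliminary observation will be that the negative spectrum of $L_+$ is discrete, so ``exactly one negative eigenvalue'' makes sense: since $u$ is positive and enjoys the decay estimates of Propositions \ref{p2.4.4} and \ref{p2.5.1}, the multiplication operator by $p|u|^{p-1}$ is a relatively compact perturbation of $-\Delta+\lambda$, hence by Weyl's theorem the essential spectrum of $L_+$ coincides with that of $-\Delta+\lambda$. By Proposition \ref{p3.6.1} (or by $\lambda>-\delta_0$ and (\ref{E:delta-spec}) in the $F_\lambda$ case, after the rescaling of \S{\ref{s3.a}}), this essential spectrum lies in $[0,\infty)$, so any negative spectrum consists of eigenvalues of finite multiplicity.

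For the lower bound, note that $u\in H^2(M)\subset L^2(M)$ by the regularity and decay results of \S\S{\ref{s2.3}--\ref{s2.5}}, and $(L_+ u,u)<0$ by (\ref{3.6.9A}); since the essential spectrum is nonnegative, this forces the existence of at least one strictly negative eigenvalue of $L_+$.

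For the upper bound, I would argue by contradiction. Suppose $L_+$ has two (not necessarily distinct) negative eigenvalues $\mu_1\le\mu_2<0$, with corresponding orthonormal real-valued eigenvectors $e_1,e_2\in H^2(M)$. On the two-dimensional subspace $V=\mathrm{span}_{\Rr}(e_1,e_2)$, the quadratic form $\psi\mapsto (L_+\psi,\psi)$ is negative definite, since for $\psi=\alpha_1 e_1+\alpha_2 e_2$ one has $(L_+\psi,\psi)=\mu_1\alpha_1^2+\mu_2\alpha_2^2\le\mu_2\|\psi\|^2<0$ unless $\psi=0$. Now consider the linear functional $\ell\colon V\to\Rr$ given by $\ell(\psi)=(u,\psi)$ in the energy-minimizer case, or $\ell(\psi)=(u^p,\psi)$ in the $F_\lambda$-minimizer case. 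Since $\dim V=2$ and $\ell$ takes values in $\Rr$, $\ker\ell\cap V$ is at least one-dimensional, so we can choose a nonzero real-valued $\psi_0\in V$ satisfying the orthogonality constraint (\ref{3.6.3}) or (\ref{3.6.4}). Then $(L_+\psi_0,\psi_0)<0$, contradicting (\ref{3.6.5}). Hence $L_+$ has at most one negative eigenvalue, and combined with the lower bound, exactly one.

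The main conceptual obstacle is the essential-spectrum step: one needs the relative compactness of the perturbation $p|u|^{p-1}$, which is where the decay estimates of \S{\ref{s2.5}} enter (they give $|u|^{p-1}\to 0$ at infinity on a weakly homogeneous space, so multiplication by $|u|^{p-1}$ maps $H^1$ into $L^2$ compactly). Everything else is a soft dimension-counting argument on the negative spectral subspace against a single real linear constraint, which is what makes the ``exactly one'' conclusion so clean.
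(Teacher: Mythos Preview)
Your argument is correct and follows the same logical skeleton as the paper: invoke (\ref{3.6.9A}) to get at least one negative direction, then use the codimension-one nonnegativity (\ref{3.6.5}) to preclude a two-dimensional negative subspace via the dimension count you give. The paper's proof is in fact just the one-line remark ``This result, together with (\ref{3.6.5}), implies'' preceding the proposition; your contradiction argument with $\ker\ell\cap V$ is the standard way to unpack that remark, and the paper leaves it implicit. Your added discussion of the essential spectrum (to ensure the negative spectrum is discrete) is not part of the paper's proof here---the paper defers that to Proposition~\ref{p3.6.5}---but it is a reasonable point to make and your justification via relative compactness and Weyl's theorem is exactly what the paper later does.
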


Returning to $L_-$, we note that
\ben
L_-u=-(\Delta u-\lambda u+|u|^{p-1}u)=0,
\label{3.6.10}
\een
so
\ben
u\in\Cal{N}(L_-).
\label{3.6.11}
\een
We have the following more precise result.

\begin{proposition}\label{p3.6.3}
If $u$ is either an energy minimizer or an
$F_\lambda$-minimizer, satisfying (\ref{3.A.11}), then
\ben
\Cal{N}(L_-)=\Span (u).
\label{3.6.12}
\een
\end{proposition}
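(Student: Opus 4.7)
My plan is to show $\Cal{N}(L_-)\subset\Span(u)$; the reverse inclusion is (\ref{3.6.11}). Since the coefficients of $L_-$ are real, $L_-$ commutes with complex conjugation, so it suffices to show every real-valued $v\in\Cal{N}(L_-)$ is a real scalar multiple of $u$; then, writing a general $v\in\Cal{N}(L_-)$ as $v=v_0+iv_1$ with $v_0,v_1$ real, both pieces are real multiples of $u$ and $v$ lies in the complex span of $u$. The key input is that $u$ is a positive ground state for $L_-$ at eigenvalue $0$ (by (\ref{3.6.10}), Proposition \ref{p2.4.1}, and (\ref{3.6.6})), which makes a Perron--Frobenius type argument available.

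The first step is to show that if $v$ is a real element of $\Cal{N}(L_-)$, then $|v|\in\Cal{N}(L_-)$ as well. Writing $v=v_+-v_-$ with $v_\pm=\max(\pm v,0)$, the standard truncation lemma gives $v_\pm\in H^1(M)$ with $\nabla v_\pm$ vanishing a.e.\ on $\{\mp v\ge 0\}$, whence $\|\nabla|v|\|_{L^2}=\|\nabla v\|_{L^2}$ and $\||v|\|_{L^2}=\|v\|_{L^2}$. Thus $(L_-|v|,|v|)=(L_-v,v)=0$. Since $L_-\ge 0$ on $H^1(M)$ by (\ref{3.6.6}), this minimum property forces $L_-|v|=0$: expanding $0\le(L_-(|v|+t\phi),|v|+t\phi)$ for arbitrary real $\phi\in H^1(M)$ and letting $t\to 0$ yields $(L_-|v|,\phi)=0$.

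Next I apply the strong maximum principle. Since $|v|\ge 0$ is a weak $H^1$ solution of $(-\Delta+(\lambda-|u|^{p-1}))|v|=0$ with bounded coefficient $\lambda-|u|^{p-1}$ (by the global bounds of \S\ref{s2.3}), the Harnack inequality invoked for (\ref{2.4.4}) applies: either $|v|\equiv 0$ on $M$, or $|v|>0$ everywhere on $M$. In the second case, local elliptic regularity makes $v$ continuous on $M$, and connectedness of $M$ forces $v$ to have a constant sign.

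To conclude, given real $v\in\Cal{N}(L_-)$, set $c=(v,u)/\|u\|_{L^2}^2$ and $w=v-cu$; note $u\in L^2(M)$ since $u\in H^1(M)$. Then $w\in\Cal{N}(L_-)$ and $(w,u)=0$. Applying the previous steps to $w$: either $w\equiv 0$, or $w$ has constant sign; but the latter is incompatible with $(w,u)=0$ given $u>0$ everywhere on $M$. Hence $w=0$ and $v=cu$, as desired. The main technical input is the strong maximum principle in the weakly homogeneous setting, which is at hand via (\ref{2.4.4}); the Kato-type identity $\|\nabla|v|\|_{L^2}=\|\nabla v\|_{L^2}$ is a pointwise a.e.\ computation on the partition $\{v>0\}\cup\{v<0\}\cup\{v=0\}$.
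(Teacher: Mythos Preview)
Your proof is correct and follows essentially the same route as the paper's: show that any nonzero real element of $\Cal{N}(L_-)$ must have a fixed sign (via $|v|\in\Cal{N}(L_-)$ and the Harnack inequality), then conclude one-dimensionality by an orthogonality contradiction. The paper phrases the $|v|\in\Cal{N}(L_-)$ step as ``$|v|$ is also a minimizer of the Rayleigh quotient,'' and the final step simply as ``no two such can be orthogonal to each other''---your explicit $w=v-cu$ argument is the same Gram--Schmidt observation spelled out.
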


\begin{proof}
It suffices to show that any nonzero, real-valued element of
$\Cal{N}(L_-)$ must be either everywhere $>0$ or $<0$, since then no two
such can be orthogonal to each other.  Now, if $v\in \Cal{N}(L_-)$ is real
valued and $\|v\|_{L^2}=1$, then $v$ minimizes
\ben
\{\|\nabla v\|^2_{L^2}+\lambda\|v\|^2_{L^2}-(|u|^{p-1}v,v):\|v\|_{L^2}=1\}.
\label{3.6.13}
\een
Then $|v|$ is also minimizing, so $|v|\in\Cal{N}(L_-)$.  Then the Harnack
inequality implies $|v|>0$ on $M$, so indeed either $v>0$ or $v<0$ on $M$.
\end{proof}

We turn to some comments on $\Cal{N}(L_+)$.  In case $M$ has a $1$-parameter
group of isometries, generated by a vector field $\frX$ (known as a Killing
field), we get an element of $\Cal{N}(L_+)$ as follows.  Since Killing fields
commute with $\Delta$ we have (assuming $u>0$)
\ben
\Delta(\frX u)-\lambda(\frX u)+p|u|^{p-1}(\frX u)=0,
\label{3.6.14}
\een
hence
\ben
\frX u\in \Cal{N}(L_+),\quad \text{if $\frX$ is a Killing field on $M$},
\label{3.6.15}
\een
given estimates on $u$ assuring that $\frX u\in \Cal{D}(L_+)$.

It is useful to regard $L_+$ and $L_-$ as two operators in a continuum, defined
by
\ben
L_a\psi=-\Delta\psi+\lambda\psi-a|u|^{p-1}\psi,
\label{3.6.16}
\een
for $a\in\Rr$, particularly for $a\in [1,p]$.. Note that
\ben
L_1=L_-,\quad L_p=L_+.
\label{3.6.17}
\een
For each $a\in\Rr$, $L_a$ is self-adjoint on $L^2(M)$, with domain $\Cal{D}
(L_a)=H^2(M)$.  The following result extends Proposition \ref{p3.6.2}.

\begin{proposition}\label{p3.6.4}
Assume $u>0$ is either an energy minimizer or an $F_\lambda$-minimizer,
satisfying (\ref{3.A.11}).  Then, if $1<a\leq p$, $L_a$
has exactly one negative eigenvalue, and it has multiplicity one.
\end{proposition}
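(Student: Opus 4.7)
The plan is to interpolate between the two endpoints of the family $\{L_a\}_{a\in[1,p]}$: at $a=1$, $L_1=L_-\geq 0$ with $\Cal{N}(L_-)=\Span(u)$ (Proposition~\ref{p3.6.3}), and at $a=p$, $L_p=L_+$ has exactly one negative eigenvalue (Proposition~\ref{p3.6.2}). Monotonicity of $L_a$ in $a$ will propagate the bound on the \emph{second} eigenvalue of $L_+$ back down to all $a\in(1,p]$; a direct test with $u$ produces a negative \emph{first} eigenvalue; and a Perron--Frobenius / Harnack argument supplies simplicity.

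I would first localize the spectrum. By the bounds of Proposition~\ref{p2.4.4} and the decay estimate of Proposition~\ref{p2.5.1}, $|u|^{p-1}$ is bounded on $M$ and $|u(x)|^{p-1}\to 0$ as $x$ escapes to infinity, so multiplication by $a|u|^{p-1}$ is relatively compact with respect to $-\Delta+\lambda$ on $L^2(M)$. Hence $\EsSp(L_a)=\EsSp(-\Delta+\lambda)\subset[0,\infty)$, the last inclusion by Proposition~\ref{p3.6.1} (whose proof (\ref{3.6.9}) uses only $L_-\geq 0$, which is available in both the energy-minimizer and the rescaled $F_\lambda$-minimizer cases). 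Any negative point of $\spec(L_a)$ is therefore an isolated eigenvalue of finite multiplicity. Pairing the PDE (\ref{3.A.11}) (or (\ref{3.6.7})) against $u$ gives
\[
(L_a u,u)=((-\Delta+\lambda)u,u)-a(|u|^{p-1}u,u)=(1-a)\|u\|_{L^{p+1}}^{p+1}<0
\]
for $a>1$; by the min--max principle $\mu_1(a):=\inf\spec(L_a)<0$, and by the preceding sentence $\mu_1(a)$ is an eigenvalue.

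For simplicity of $\mu_1(a)$ I would run the argument of Proposition~\ref{p3.6.3}: a real-valued $\mu_1(a)$-eigenfunction $\psi$ satisfies $(L_a|\psi|,|\psi|)=(L_a\psi,\psi)$, so $|\psi|$ is itself a ground state and weakly solves $(-\Delta+\lambda-a|u|^{p-1})|\psi|=\mu_1(a)|\psi|$; the Harnack inequality invoked in \S{\ref{s2.4}} then gives $|\psi|>0$ on $M$, so $\psi$ is of constant sign. Two $L^2$-orthogonal eigenfunctions each of constant sign cannot both be nonzero, so the eigenspace is one-dimensional.

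The main obstacle is the absence of a second negative eigenvalue, and this is where the interpolation enters. Since $a_1\leq a_2$ gives $L_{a_1}-L_{a_2}=(a_2-a_1)|u|^{p-1}\geq 0$ as a quadratic form on $H^1(M)$, the min--max principle shows that each eigenvalue $\mu_k(a)$ is nonincreasing in $a$. By Proposition~\ref{p3.6.2}, $\mu_2(p)\geq 0$, so $\mu_2(a)\geq\mu_2(p)\geq 0$ for every $a\in(1,p]$. Combined with the two previous paragraphs, $L_a$ has exactly one negative eigenvalue, of multiplicity one, as claimed. The only delicate analytic input underlying the whole argument is the decay of $u$, which is what pushes the essential spectrum of $L_a$ up to $[0,\infty)$ and thereby forbids a negative continuum from masquerading as a second eigenvalue.
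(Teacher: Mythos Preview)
Your argument is correct and rests on the same idea as the paper: the form monotonicity $L_a\geq L_p$ for $a\leq p$, combined with the codimension-one positivity (\ref{3.6.5}) of $L_+$. The paper's version is more direct---it simply computes $(L_a\psi,\psi)=(L_+\psi,\psi)+(p-a)(u^{p-1}\psi,\psi)\geq 0$ on the explicit subspace $\{u\}^\perp$ (resp.\ $\{u^p\}^\perp$), which immediately gives $\mu_2(L_a)\geq 0$ and makes your essential-spectrum localization and Perron--Frobenius simplicity argument redundant.
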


\begin{proof}
First note that
\ben
\aligned
(L_au,u)&=(L_1u+(1-a)u^{p-1}u,u) \\
&=-(a-1)\int_M u^{p+1}\, dV,
\endaligned
\label{3.6.18}
\een
which is $<0$ if $a>1$.  Next, for $\psi\in H^1(M)$,
\ben
\aligned
(L_a\psi,\psi)&=(L_p\psi+(p-a)u^{p-1}\psi,\psi) \\
&=(L_+\psi,\psi)+(p-a)\int_M u^{p-1}|\psi|^2\, dV,
\endaligned
\label{3.6.19}
\een
so, by (\ref{3.6.5}), if $u$ is an energy minimizer,
\ben
(u,\psi)=0\Longrightarrow (L_a\psi,\psi)\ge 0\ \text{ if }\ a\le p,
\label{3.6.20}
\een
while if $u$ is an $F_\lambda$-minimizer satisfying (\ref{3.A.11}),
\ben
(u^p,\psi)=0\Longrightarrow (L_a\psi,\psi)\ge 0\ \text{ if }\ a\le p.
\label{3.6.21}
\een
These results prove the proposition.
\end{proof}

The result (\ref{3.6.6}) implies $\Spec L_1\subset [0,\infty)$, and
Proposition \ref{p3.6.4} implies that
$\EsSp L_a\subset [0,\infty)$ for
$1<a\le p$.  We can say more about the essential spectrum.

\begin{proposition}\label{p3.6.5}
If $u$ is either an energy minimizer or an
$F_\lambda$-minimizer satisfying (\ref{3.A.11}), then, for all $a\in\Rr$,
\ben
\EsSp L_a=\EsSp (-\Delta+\lambda).
\label{3.6.22}
\een
\end{proposition}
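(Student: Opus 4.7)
The plan is to identify $L_a - (-\Delta+\lambda) = -a|u|^{p-1}$ as a relatively compact perturbation of $-\Delta+\lambda$ and then invoke Weyl's theorem on the invariance of the essential spectrum under such perturbations. Concretely, it suffices to show that the multiplication operator $V := |u|^{p-1}$ defines a compact operator from $H^2(M)$ into $L^2(M)$, or equivalently that $V(-\Delta+\lambda+C)^{-1}$ is compact on $L^2(M)$ for some (hence every) large $C$; the conclusion (\ref{3.6.22}) then follows at once.

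The key point is that $V$ decays at infinity. Since $u$ is a positive minimizer, the global estimates of \S\ref{s2.5} apply: with the covering $\{\widetilde{\Omega}_j\}$ of $M$ by isometric images of a fixed bounded set, Proposition \ref{p2.5.1} provides constants $C_{\epsilon,s}$ such that
\ben
\|u\|_{C^0(\widetilde{\Omega}_j)} \le C_{\epsilon,0}\, A_j^\epsilon B_j^{1-\epsilon}, \quad \forall\, j,
\een
where $\sum_j A_j^2 < \infty$ and $\sum_j B_j < \infty$. In particular the sequence $\{\|u\|_{C^0(\widetilde\Omega_j)}\}$ tends to $0$ as $j\to\infty$, so $V\in C^0(M)$ with $V(x)\to 0$ at infinity (in the sense that for every $\eta>0$ the set $\{V>\eta\}$ is contained in a finite union of the $\widetilde\Omega_j$).

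With this in hand, decompose $V = V_1 + V_2$, where $V_1$ is supported in a precompact set $K_\eta\subset M$ (a finite union of the $\widetilde\Omega_j$) and $\|V_2\|_{L^\infty}<\eta$. The operator $V_2(-\Delta+\lambda+C)^{-1}$ has operator norm on $L^2(M)$ bounded by $\eta/C$, which is arbitrarily small. For $V_1(-\Delta+\lambda+C)^{-1}$, note that $(-\Delta+\lambda+C)^{-1}\colon L^2(M)\to H^2(M)$ is bounded, and multiplication by $\mathbf{1}_{K_\eta}$ factors this through $H^2(K_\eta')\hookrightarrow L^2(K_\eta')$ (a slightly larger precompact set), which is compact by Rellich's theorem on bounded-geometry manifolds. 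Hence $V_1(-\Delta+\lambda+C)^{-1}$ is compact. Since the compact operators form a norm-closed ideal, letting $\eta\to 0$ shows that $V(-\Delta+\lambda+C)^{-1}$ is compact, so $V$ is a relatively compact perturbation of $-\Delta+\lambda$.

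The main (mild) obstacle is the first step: ensuring $V(x)\to 0$ at infinity in a sufficiently uniform way. This is where the bounded-geometry uniformity of the estimates in \S\ref{s2.5} is essential; in particular, the fact that $\{A_j\}\in\ell^2$ and $\{B_j\}\in\ell^1$ forces $A_j,B_j\to 0$ and hence $\|V\|_{L^\infty(\widetilde\Omega_j)}\to 0$, which is exactly what is needed to reduce to the compactly supported case. Everything else is standard: Weyl's theorem (applied to the self-adjoint operators $-\Delta+\lambda$ and $L_a$ on $L^2(M)$, with common domain $H^2(M)$) then yields $\EsSp L_a=\EsSp(-\Delta+\lambda)$ for every $a\in\Rr$.
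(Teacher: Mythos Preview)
Your proof is correct and follows essentially the same approach as the paper: both establish (\ref{3.6.22}) via Weyl's essential spectrum theorem, by showing that the perturbation $-a|u|^{p-1}$ is relatively compact, using the decay of $u$ established in \S\ref{s2.5} (and \S\ref{s3.4}) together with Rellich's theorem. The paper phrases this via the resolvent identity, writing $(L_a+\mu)^{-1}-(-\Delta+\lambda+\mu)^{-1}=-a(L_a+\mu)^{-1}M_{|u|^{p-1}}(-\Delta+\lambda+\mu)^{-1}$ and asserting compactness, while you spell out the standard $V=V_1+V_2$ truncation argument in more detail; the content is the same.
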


\begin{proof}
Given $a\in\Rr$, pick $\mu>0$ so large that $L_a+\mu$ and
$-\Delta+(\lambda+\mu)$ are both invertible.  By Weyl's essential spectrum
theorem (\cite{RS-book}, p.~112) it suffices to note that
\ben
S_a=(L_a+\mu)^{-1}-(-\Delta+(\lambda+\mu))^{-1}\ \text{ is compact.}
\label{3.6.23}
\een
Recalling the formula (\ref{3.6.16}) for $L_a$, we have, by the resolvent
identity,
\ben
S_a=-a(L_a+\mu)^{-1}M_{|u|^{p-1}}(-\Delta+(\lambda+\mu))^{-1},
\label{3.6.24}
\een
whose compactness follows readily from the decay results given in
\S{\ref{s2.5}} and \S{\ref{s3.4}}, plus the Rellich theorem.
\end{proof}

For the next result, we assume the following:
\ben
\Spec (-\Delta+\lambda)\subset [\delta,\infty),\quad \delta>0.
\label{3.6.25}
\een
For $F_\lambda$-minimizers, this is equivalent to the hypothesis
(\ref{E:lam-del})--(\ref{E:delta-spec}).  For energy minimizers,
(\ref{3.6.25}) is slightly stronger than
(\ref{3.6.8}), and it can be expected to hold for almost all (if not all)
energy minimizers.

\begin{proposition}\label{p3.6.6}
Let $u$ be either an energy minimizer or an
$F_\lambda$-minimizer satisfying (\ref{3.A.11}), and assume (\ref{3.6.25})
holds.  Then
\ben
1<a<p\Longrightarrow \Cal{N}(L_a)=0.
\label{3.6.26}
\een
\end{proposition}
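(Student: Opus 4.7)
My plan is to argue by contradiction, exploiting strict monotonicity of the eigenvalues of the family $L_a = (-\Delta + \lambda) - a|u|^{p-1}$ in the parameter $a$, together with the sharp count in Proposition~\ref{p3.6.4}. Suppose some $a_0 \in (1,p)$ admits a nonzero $\psi_0 \in \Cal{N}(L_{a_0})$; since $L_a$ preserves reality, I may take $\psi_0$ real-valued. Hypothesis~(\ref{3.6.25}) together with Proposition~\ref{p3.6.5} yields $\EsSp L_{a_0} \subset [\delta,\infty)$, so $0$ is an isolated point of the spectrum and $\psi_0$ is a genuine $H^2$-eigenfunction. By Proposition~\ref{p3.6.4}, $L_{a_0}$ has a unique simple negative eigenvalue $\mu_1(a_0) < 0$ with (real) eigenfunction $u_0$, and $\psi_0 \perp u_0$ in $L^2$ by self-adjointness on distinct eigenvalues.

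Next I would test the quadratic form of $L_{a_1}$, for $a_1 \in (a_0,p]$ with $a_1 - a_0$ small, on the two-dimensional real subspace $V = \Span(u_0,\psi_0)$, using the identity
\[
L_{a_1} = L_{a_0} - (a_1 - a_0)\,|u|^{p-1}.
\]
For $\phi = c_1 u_0 + c_2 \psi_0 \in V$, the $L_{a_0}$-contribution collapses to $c_1^2 \mu_1(a_0)\|u_0\|^2 \leq 0$: the $\psi_0\psi_0$ term vanishes since $L_{a_0}\psi_0 = 0$, and the cross term vanishes since $u_0 \perp \psi_0$ and $L_{a_0} u_0 = \mu_1(a_0) u_0$. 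Positivity of $u$ (Proposition~\ref{p3.4.1}) makes $\phi \mapsto \int_M u^{p-1}|\phi|^2\, dV$ a positive definite quadratic form, hence bounded below by $c\|\phi\|_{L^2}^2$ for some $c>0$ on the finite-dimensional $V$. Combining these, $(L_{a_1}\phi,\phi) \leq -(a_1 - a_0)\,c\,\|\phi\|_{L^2}^2$ on $V$, so $L_{a_1}$ is negative definite on a two-dimensional subspace.

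By the min-max principle, $L_{a_1}$ must then have at least two points of spectrum strictly below $0$; Proposition~\ref{p3.6.5} together with (\ref{3.6.25}) places $\EsSp L_{a_1} \subset [\delta,\infty)$, so both are discrete eigenvalues, counted with multiplicity. Since $a_0 < p$, I can always choose $a_1 \in (a_0,p]$, so the conclusion contradicts Proposition~\ref{p3.6.4}, which permits exactly one negative eigenvalue of multiplicity one.

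The step I expect to require the most care is the bookkeeping that separates discrete eigenvalues from essential spectrum: this is precisely where the strict lower bound $\delta > 0$ in (\ref{3.6.25}) becomes indispensable, because via Proposition~\ref{p3.6.5} it pushes $\EsSp L_a$ uniformly away from $0$ and thereby prevents the ``second negative point'' generated by the perturbation from being absorbed into continuous spectrum. The strict positivity of $u$ (needed for positive-definiteness of $\int u^{p-1}|\cdot|^2$ on $V$) and the multiplicity-one clause in Proposition~\ref{p3.6.4} are likewise essential; without either of them, the two-dimensional-negativity argument would degenerate.
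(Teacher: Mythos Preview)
Your argument is correct and takes a genuinely different, more elementary route than the paper's proof. The paper invokes Kato--Rellich analytic perturbation theory: assuming $\dim\Cal{N}(L_{a_0})=m>0$, it produces analytic eigenvalue branches $\lambda_j(a)$ with $\lambda_j(a_0)=0$ and analytic eigenfunctions $\psi_{ja}$, then differentiates the eigenvalue equation in $a$ to derive the Hellmann--Feynman type identity $\lambda_j'(a)\|\psi_{ja}\|^2=-\int_M u^{p-1}|\psi_{ja}|^2\,dV<0$, concluding that the branches dip below zero for $a$ slightly larger than $a_0$ and thereby contradicting Proposition~\ref{p3.6.4}. Your approach sidesteps the perturbation-theoretic machinery entirely: you work directly with the quadratic form of $L_{a_1}=L_{a_0}-(a_1-a_0)|u|^{p-1}$ on the two-dimensional span of the bottom eigenfunction $u_0$ and the putative null vector $\psi_0$, exploit orthogonality and the strict positivity of $u$ to force negative definiteness there, and then let min--max deliver two negative eigenvalues (the hypothesis (\ref{3.6.25}) via Proposition~\ref{p3.6.5} keeping the essential spectrum safely above zero). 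What the paper's route buys is slightly more: it shows the null eigenvalues move \emph{analytically} and strictly downward, information not needed for the bare statement. What your route buys is economy---no appeal to \cite{RS-book} or to analyticity, just a finite-dimensional test-space computation.
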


\begin{proof}
By Proposition \ref{p3.6.5} and (\ref{3.6.25}), for each $a$,
\ben
\text{Ess Spec}\, L_a\subset [\delta,\infty),\quad \delta>0.
\label{3.6.27}
\een
Suppose $a_0\in (1,p)$ and $\text{dim}\, \Cal{N}(L_{a_0})=m>0$.  The
Kato-Rellich theorem (\cite{RS-book}, p.~22) implies there exist analytic
functions $\lambda_j(a), 1\le j\le m$, for $a$ close to $a_0$, with
$\lambda_j(a_0)=0$, such that these are all the eigenvalues of $L_a$ near
$0$.  Also (\cite{RS-book}, p.~71)
there are corresponding eigenfunctions $\psi_{ja}$, analytic in $a$:
\ben
L_a\psi_{ja}=\lambda_j(a)\psi_{ja},\quad (\psi_{ja},\psi_{ka})\equiv
\delta_{jk},
\label{3.6.28}
\een
the orthonormality holding for $a$ real (and close to $a_0$).

Let us denote by $\psi^\#_a$ the (normalized) eigenfunction of $L_a$ given
by Proposition \ref{p3.6.4}.  We have
\ben
(\psi_{ja},\psi^\#_a)=0,
\label{3.6.29}
\een
for $j\in \{1,\dots,m\}$, real $a$ close to $a_0$.  Now apply $d/da$ to
(\ref{3.6.28}).  We get
\ben
-u^{p-1}\psi_{ja}+L_a\xi_{ja}=\lambda'_j(a)\psi_{ja}+\lambda_j(a)\xi_{ja},
\label{3.6.30}
\een
where
\ben
\xi_{ja}=\frac{d}{da}\psi_{ja}.
\label{3.6.31}
\een
The normalization in (\ref{3.6.28}) implies
\ben
(\psi_{ja},\xi_{ja})=0,
\label{3.6.32}
\een
so taking the inner product of (\ref{3.6.30}) with $\psi_{ja}$ gives
\ben
\lambda'_j(a)\|\psi_{ja}\|^2=-\int_M u^{p-1}|\psi_{ja}|^2\, dV,
\label{3.6.33}
\een
since
\ben
(L_a\xi_{ja},\psi_{ja})=(\xi_{ja},\lambda_j(a)\psi_{ja})=0.
\label{3.6.34}
\een
Hence, for $a$ close to 0,
\ben
\lambda'_j(a)<0,
\label{3.6.35}
\een
and if $\lambda_j(a_0)=0$, we get
\ben
\lambda_j(a)<0\ \text{ for }\ a_0<a<a_0+\ep,
\label{3.6.36}
\een
for some positive $\ep$.  This contradicts Proposition \ref{p3.6.4}, and
completes the proof.
\end{proof}

\subsection{Conditional orbital stability result}\label{s3.8}

We assume $M$ is a weakly homogeneous space, and $p$ satisfies
(\ref{E:subcrit}).  As in \S{\ref{s3.5}}, we fix $\beta>0$ and set
\ben
\aligned
X=\{u\in H^1(M):Q(u)&=\beta\},\quad \Cal{I}_\beta=\inf\,\{E(u):u\in X\}, \\
Y=\{u&\in X:E(u)=\Cal{I}_\beta\}.
\endaligned
\label{3.8.1}
\een
Under these hypotheses, the nonlinear Schr{\"o}dinger equation
\ben
iv_t+\Delta v+|v|^{p-1}v=0,\quad v(0)=v_0,
\label{3.8.2}
\een
is globally solvable, given $v_0\in H^1(M)$, via an argument given for $\Rr^n$
in \cite{SS}, \S{3.2.2}.  Conservation of mass and energy imply that $X$ and
$Y$ are invariant under the solution operator to (\ref{3.8.2}).
We investigate the following question concerning orbital stability.
Assume
\ben
v_0\in X
\label{3.8.2A}
\een
is close to $Y$ (distance measured in $H^1(M)$-norm).  We then ask whether
the solution $v(t)$ to (\ref{3.8.2}) can be shown to be close to $Y$, for
all $t\in\Rr$.  Since energy is conserved for solutions to (\ref{3.8.2}):
\ben
E(v(t))\equiv E(v_0),
\label{3.8.3}
\een
a positive result would follow if one could show that if $u\in X$ and $E(u)$
is close to $\Cal{I}_\beta$, then $u$ is close to $Y$.

We establish such a result, under the following two assumptions.
The first is an essential uniqueness hypothesis:
\ben
\aligned
&\text{If $u_1, u_2$ are positive functions in $Y$, there is an isometry} \\
&\varphi:M\rightarrow M
\ \text{ such that $u_2=u_1\circ\varphi$.}
\endaligned
\label{3.8.4}
\een
Recall that if $u\in Y$, there exists $\lambda\in\Rr$ such that
\ben
-\Delta u+\lambda u-|u|^{p-1}u=0.
\label{3.8.5}
\een
The hypothesis (\ref{3.8.4}) implies that (\ref{3.8.5})
holds with the same $\lambda$ for all $u\in Y$.
Our second hypothesis is that (\ref{3.6.25}) hold, i.e.,
\ben
\Spec (-\Delta+\lambda)\subset [\delta,\infty),\quad \text{for some }\
\delta>0,
\label{3.8.6}
\een
which, recall, is slightly stronger than (\ref{3.6.8}).

To state our first result, let $\Cal{G}$ denote the group of operators
on functions on $M$ of the form
\ben
u(x)\mapsto e^{i\theta}u(\varphi(x)),\quad \theta\in\Rr,\ \ \varphi:M
\rightarrow M\ \text{isometry}.
\label{3.8.7}
\een
Thus $\Cal{G}$ acts as a group of isometries on $L^2(M)$ and on $H^1(M)$,
preserving $X$ and $Y$.  The following is immediate.

\begin{proposition}\label{p3.8.1}
Under the hypothesis (3.8.4), $\Cal{G}$ acts
transitively on $Y$, and $Y$ is a smooth, finite dimensional submanifold of
$X$.
\end{proposition}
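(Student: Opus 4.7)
The plan is two-fold: first establish transitivity of $\Cal{G}$ on $Y$, and then upgrade this to the smooth submanifold conclusion via the Lie-group structure of $\Cal{G}$.

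To prove transitivity, I would begin by showing that every $u \in Y$ has the form $u = e^{i\theta} v$ for some constant $\theta \in \Rr$ and some strictly positive real-valued $v \in Y$. Note first that $|u|$ is again in $Y$ (as in \S{\ref{s3.4}}), and Proposition \ref{p3.4.1} applied to $|u|$ gives $|u| > 0$ on $M$. Writing $u = u_0 + i u_1$ with real $u_0, u_1$, a pointwise Cauchy--Schwarz calculation yields $|\nabla |u||^2 \le |\nabla u|^2$, with equality at a point exactly when the vectors $u_0\nabla u_1 - u_1 \nabla u_0$ vanish there. Since $E(u)=E(|u|)=\Cal{I}_\beta$ and $\|u\|_{L^{p+1}} = \||u|\|_{L^{p+1}}$, the gradient identity must hold almost everywhere; combined with $|u|>0$ and the smoothness of $u$ from Proposition \ref{p3.4.1}, this forces $u_1/u_0$ to be locally constant wherever $u_0\neq 0$, and hence the phase of $u$ to be a single global constant. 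Hypothesis (\ref{3.8.4}) then matches any two positive members of $Y$ via an isometry $\varphi:M\to M$, and combining with a phase rotation exhibits any two elements of $Y$ as related by an element of $\Cal{G}$.

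For the manifold structure, I would invoke the Myers--Steenrod theorem, by which $\text{Isom}(M)$ is a finite-dimensional Lie group. Then $\Cal{G}$, acting via the representation (\ref{3.8.7}), is itself a finite-dimensional Lie group acting smoothly on $H^1(M)$ by linear isometries and preserving both $X$ and $Y$. Fix any $u_0 \in Y$; by the transitivity just established, the orbit map $g \mapsto g\cdot u_0$ sends $\Cal{G}$ onto $Y$. Its stabilizer $\text{Stab}(u_0)$ is closed in $\Cal{G}$, hence a Lie subgroup, so the orbit map descends to a smooth injection $\Cal{G}/\text{Stab}(u_0) \to X$. Standard homogeneous-space theory then equips $Y$ with a smooth finite-dimensional submanifold structure inherited from $\Cal{G}/\text{Stab}(u_0)$.

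The main obstacle is the rigidity step $\|\nabla|u|\|_{L^2} = \|\nabla u\|_{L^2}\Rightarrow u=e^{i\theta}|u|$ for complex $H^1$ functions; although this is classical, it genuinely uses the smoothness and strict positivity of $|u|$ from \S{\ref{s3.4}}, both to justify the pointwise Cauchy--Schwarz analysis and to pass from ``phase constant almost everywhere on each component where $u_0 \neq 0$'' to ``phase globally constant''. Once transitivity is in hand, the Lie-theoretic portion is essentially formal.
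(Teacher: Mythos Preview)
The paper gives no proof of this proposition at all; it simply declares ``The following is immediate'' and moves on. Your proposal is correct and supplies exactly the details the authors suppress. The two genuinely nontrivial ingredients are (i) the phase-rigidity step showing any $u\in Y$ satisfies $u=e^{i\theta}|u|$, which you handle via the identity $|u|^2(|\nabla u|^2-|\nabla|u||^2)=|u_0\nabla u_1-u_1\nabla u_0|^2$ together with $|u|>0$ from \S{\ref{s3.4}}, and (ii) the Myers--Steenrod theorem giving $\mathrm{Isom}(M)$ finite-dimensional Lie structure, after which the orbit description of $Y$ is standard. One small point worth making explicit: smoothness of the orbit map $g\mapsto g\cdot u_0$ into $H^1(M)$ uses that $u_0$ itself is smooth with controlled decay (Proposition \ref{p2.4.4} via \S{\ref{s3.4}}), so that Killing fields $\frX$ produce $\frX u_0\in H^1(M)$; for a merely $H^1$ function the isometry action need not be differentiable in the group variable.
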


It is this result that puts the ``orbital'' in ``orbital stability.''
In case $M=\Rr^n$, $Y$ (shown in \S{\ref{sa3}} to be nonempty)
is diffeomorphic to $\Rr^n\times S^1$ (granted hypothesis (\ref{3.8.4}),
also demonstrated for $\Rr^n$ in \S{\ref{sa3}}).
In other cases, the group of
isometries of $M$ might be discrete and $Y$ would be 1-dimensional.

To proceed, for $\ep>0$, set
\ben
\Cal{O}_\ep =\{u\in X:\text{dist}_{H^1}(u,Y)\le\ep\}.
\label{3.8.8}
\een
Then $\Cal{O}_\ep$ is invariant under the action of $\Cal{G}$.  By Proposition
\ref{p3.8.1}, if $\ep$ is sufficiently small, given $u\in Y$, $\Cal{O}_\ep$ is
swept out by the $\Cal{G}$-action on a codimension-$m$ space $\Sigma$, normal
to $Y$ at $u$ (with $m=\text{dim}\, Y$).

The following is an orbital stability result.

\begin{proposition}\label{p3.8.2}
Assume hypotheses (\ref{3.8.4}) and (\ref{3.8.6}).
For $\ep>0$ sufficiently small, the following holds.
If $v_\nu\in\Cal{O}_\ep$
and $E(v_\nu)\rightarrow \Cal{I}_\beta$, then
\ben
\dist_{H^1}(v_\nu,Y)\rightarrow 0.
\label{3.8.9}
\een
\end{proposition}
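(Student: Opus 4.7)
My plan is to argue by contradiction, reducing the claim to the concentration-compactness machinery of \S{\ref{s3.1}}--\S{\ref{s3.2}} together with the $\Cal{G}$-invariance of $Y$. Suppose the conclusion fails; then, after passing to a subsequence, there exist $\eta>0$ and $v_\nu\in\Cal{O}_\ep$ with $E(v_\nu)\to\Cal{I}_\beta$ but $\dist_{H^1}(v_\nu,Y)\ge\eta$ for every $\nu$. Since $Q(v_\nu)=\beta$, the $v_\nu$ form a minimizing sequence for $E|_X$ of the type treated in (\ref{3.0.9}), and $\|v_\nu\|_{H^1}$ is bounded via (\ref{E:EQbds}).

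Under the standing hypothesis $\Cal{I}_\beta<0$, Proposition \ref{P:van} rules out vanishing and Proposition \ref{P:subadd1} (with the splitting argument of \S{\ref{s3.1}}) rules out splitting for the $L^1$-sequence $\{|v_\nu|^2\}$. Hence, after extraction, concentration (\ref{3.1.16}) holds with some $y_\nu\in M$ and $\widetilde{R}(\ep)$. Using weak homogeneity, I choose $\varphi_\nu\in G$ so that $\varphi_\nu(y_\nu)$ lies in a fixed ball $B_D(o)$, and set $\tilde v_\nu(x)=v_\nu(\varphi_\nu^{-1}(x))$. Because $\varphi_\nu$ is an isometry of $M$, it preserves $Q$, $E$, and the $H^1$-norm, and the mass of $\tilde v_\nu$ is now concentrated in $B_{\widetilde R(\ep)+D}(o)$. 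The Rellich-based argument of \S{\ref{s3.2}} (the chain (\ref{4.1})--(\ref{4.10})) then applies verbatim: after a further subsequence $\tilde v_\nu\to u_\infty$ in $H^1$-norm, with $Q(u_\infty)=\beta$ and $E(u_\infty)=\Cal{I}_\beta$, so $u_\infty\in Y$.

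To conclude, the map $v\mapsto v\circ\varphi_\nu^{-1}$ is the $\theta=0$ instance of (\ref{3.8.7}) and hence lies in $\Cal{G}$; since $\Cal{G}$ preserves $Y$ and acts by $H^1$-isometries, the function $\dist_{H^1}(\cdot,Y)$ is $\Cal{G}$-invariant. Therefore
\[
\dist_{H^1}(v_\nu,Y)=\dist_{H^1}(\tilde v_\nu,Y)\le \|\tilde v_\nu-u_\infty\|_{H^1}\longrightarrow 0
\]
along the extracted subsequence, contradicting $\dist_{H^1}(v_\nu,Y)\ge\eta$.

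The main point to watch is that the concentration dichotomy is applied to the full sequence $\{v_\nu\}$ (not merely a preselected minimizer-producing subsequence), and that translation by $\varphi_\nu$ simultaneously preserves the bound $\dist_{H^1}(v_\nu,Y)\ge\eta$ and localizes the mass; both are routine consequences of \S{\ref{s3.1}}--\S{\ref{s3.2}}, so no genuine obstacle arises. Hypothesis (\ref{3.8.4}) enters only implicitly, via Proposition \ref{p3.8.1}, which is what makes the set $Y$ a single $\Cal{G}$-orbit so that the invariance of $\dist_{H^1}(\cdot,Y)$ is the natural notion to use; hypothesis (\ref{3.8.6}) is not needed for this particular convergence statement and will be drawn on only for the spectral refinements involving $L_{\pm}$ (as in Proposition \ref{p3.6.6}) that underlie the full orbital-stability consequences beyond Proposition \ref{p3.8.2}.
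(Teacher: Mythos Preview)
Your argument is correct but takes a genuinely different route from the paper. The paper proves Proposition~\ref{p3.8.2} by a \emph{local} second-variation analysis: using the transitive $\Cal{G}$-action from Proposition~\ref{p3.8.1} (this is where hypothesis~(\ref{3.8.4}) enters) it reduces to a transversal slice $\Sigma$ through a fixed $u\in Y$, parametrizes $\Sigma$ by $V=(T_uY)^\perp\subset T_uX$, and writes $F(\psi)=E\bigl(a(u+\psi)/\|u+\psi\|\bigr)$. The formula (\ref{3.5.43}) gives $D^2F(0)(\psi,\psi)=(L_+\psi_0,\psi_0)+(L_-\psi_1,\psi_1)$, and hypothesis~(\ref{3.8.6}) is then used, via (\ref{3.6.27}), to guarantee that this quadratic form is coercive modulo a finite-dimensional kernel. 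The conclusion follows from an abstract Taylor-expansion lemma (Proposition~\ref{p3.8.3}).

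Your approach is the Cazenave--Lions style \emph{global} argument: any minimizing sequence is, after recentering by isometries, precompact in $H^1(M)$, so a contradiction subsequence would have to converge to an element of $Y$. This is more elementary in that it recycles \S\S\ref{s3.1}--\ref{s3.2} wholesale and avoids the $L_\pm$ machinery; as you note, it does not actually need (\ref{3.8.4}) or (\ref{3.8.6}), nor the localization to $\Cal{O}_\ep$, so it yields a nominally stronger statement. The trade-off is that it relies on $\Cal{I}_\beta<0$ (needed for Propositions~\ref{P:van} and \ref{P:subadd1}), which is not an explicit hypothesis of Proposition~\ref{p3.8.2}, though it is the standing assumption under which $Y\neq\emptyset$ was established in \S\ref{s3.2} for noncompact $M$. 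The paper's local argument, by contrast, applies whenever $Y$ is nonempty and (\ref{3.8.4})--(\ref{3.8.6}) hold, and its second-variation framework feeds directly into the $L_\pm$ spectral analysis that drives the rest of \S\ref{s3.8}.
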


Note that we can take $\tilde{v}_\nu\in\Sigma$ such that $E(\tilde{v}_\nu)=
E(v_\nu)$, and dist$(\tilde{v}_\nu,Y)=\text{dist}(v_\nu,Y)$,
so without loss of generality we can assume $v_\nu\in \Sigma$.  We will
parametrize an appropriate space $\Sigma$ by a neighborhood of $0$ in
an $\Rr$-linear subspace $V$ of $T_uX$, of codimension $m$, as follows.
We set
\ben
V=\{\psi\in T_uX:\psi\perp T_u Y\}.
\label{3.8.10}
\een
Recall the characterization of $T_uX$ in (\ref{3.5.6}),
supplemented by (\ref{3.5.9})--(\ref{3.5.10}).
$V$ is an $\Rr$-linear subspace of $H^1(M)$, of codimension $m+1$, a Hilbert
space with the $H^1$-norm.

To proceed, we define a function $F$ on a neighborhood of $0\in V$ by
\ben
F(\psi)=E\Bigl(a\, \frac{u+\psi}{\|u+\psi\|}\Bigr).
\label{3.8.11}
\een
We have
\ben
F(0)=E(u)=\Cal{I}_\beta,\quad DF(0)=0,
\label{3.8.12}
\een
and calculations of \S{3.5} give
\ben
D^2F(0)(\psi,\psi)=(L_+\psi_0,\psi_0)+(L_-\psi_1,\psi_1).
\label{3.8.13}
\een

In light of this, Proposition \ref{p3.8.2} is a consequence of the following.

\begin{proposition}\label{p3.8.3}
Let $V$ be a real Hilbert space, $\Cal{B}_r$ a
ball of radius $r$
centered at $0\in V$, and $F:\Cal{B}_r\rightarrow \Rr$ a $C^2$ function
satisfying the following conditions:
\ben
F(0)=\Cal{I}_\beta,\quad \psi\in\Cal{B}_r\setminus 0\Rightarrow
F(\psi)>\Cal{I}_\beta
\label{3.8.14}
\een
(so $DF(0)=0$).  Also assume there exists $C>0$ and an orthogonal projection
$P:V\rightarrow V$, with range of finite codimension, such that,
for $\psi\in V$,
\ben
D^2F(0)(\psi,\psi)\ge C\|P\psi\|^2_V.
\label{3.8.15}
\een
Then, if $\rho\in (0,r)$ is small enough,
\ben
\psi_\nu\in\Cal{B_\rho},\ F(\psi_\nu)\rightarrow \Cal{I}_\beta\Longrightarrow
\|\psi_\nu\|_V\rightarrow 0.
\label{3.8.16}
\een
\end{proposition}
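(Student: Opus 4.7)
My plan is to prove Proposition~\ref{p3.8.3} via a Lyapunov--Schmidt reduction to a finite-dimensional problem.

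I would first perform a spectral cleanup. The strict minimum at $0$, applied to curves $t \mapsto F(t\psi)$, forces $D^2 F(0)(\psi,\psi) \geq 0$ for all $\psi \in V$; let $A$ denote the bounded positive self-adjoint operator on $V$ representing $D^2 F(0)$, and set $V_0 = \ker A$. The hypothesis $D^2 F(0)(\psi,\psi) \geq C\|P\psi\|^2$ forces $V_0 \subset \ker P$, hence $V_0$ is finite-dimensional. A weak-compactness argument then upgrades the coercivity to $D^2 F(0)(\psi,\psi) \geq C'\|P_0^\perp \psi\|^2$ for some $C' > 0$ (with $P_0$ the orthogonal projection onto $V_0$): if not, one finds $\psi_n \in V_0^\perp$ with $\|\psi_n\|=1$ and $\langle A\psi_n, \psi_n\rangle \to 0$; a weakly convergent subsequence $\psi_n \rightharpoonup \psi_*$ lies in $V_0 \cap V_0^\perp = \{0\}$ by weak lower semicontinuity of $\|A^{1/2}\cdot\|^2$ and weak closedness of $V_0^\perp$; the finite-dimensional component $(I-P)\psi_n$ then converges strongly to $0$, forcing $\|P\psi_n\| \to 1$ and contradicting $\langle A\psi_n,\psi_n\rangle \geq C\|P\psi_n\|^2$.

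Next I would carry out the Lyapunov--Schmidt reduction. Choose $\rho$ small enough that $\|D^2 F(\psi) - D^2 F(0)\| < C'/2$ throughout $\mathcal{B}_\rho$. For each $v \in V_0 \cap \mathcal{B}_\rho$ the map $\zeta \mapsto F(v+\zeta)$ is strongly convex on $V_0^\perp \cap (\mathcal{B}_\rho - v)$, and the implicit function theorem applied to $P_0^\perp DF(v+\zeta) = 0$ produces a unique $C^1$ minimizer $\zeta(v) \in V_0^\perp$ with $\zeta(0)=0$ and, using $A|_{V_0}=0$, $D\zeta(0)=0$. Setting $\tilde F(v) = F(v+\zeta(v))$ on $V_0 \cap \mathcal{B}_\rho$ yields a continuous function that inherits the strict minimum at $0$ (since $v+\zeta(v) \neq 0$ whenever $v \neq 0$), and strong convexity together with the Euler equation $P_0^\perp DF(v+\zeta(v)) = 0$ produces the coercive bound
\[
F(v+\zeta) \geq \tilde F(v) + \tfrac{C'}{4}\|\zeta - \zeta(v)\|^2.
\]

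Finally, given $\psi_\nu \in \mathcal{B}_\rho$ with $F(\psi_\nu) \to F(0)$, I would decompose $\psi_\nu = v_\nu + \zeta_\nu$ with $v_\nu = P_0\psi_\nu$, $\zeta_\nu = P_0^\perp \psi_\nu$, and conclude in two moves. The sandwich $F(0) \leq \tilde F(v_\nu) \leq F(\psi_\nu) \to F(0)$ forces $\tilde F(v_\nu) \to F(0)$, and since $\tilde F$ is continuous with strict minimum on the finite-dimensional ball $V_0 \cap \mathcal{B}_\rho$, a Bolzano--Weierstrass extraction gives $v_\nu \to 0$. The coercive bound then yields $\|\zeta_\nu - \zeta(v_\nu)\| \to 0$, and continuity $\zeta(v_\nu) \to \zeta(0) = 0$ gives $\zeta_\nu \to 0$, whence $\psi_\nu \to 0$ in $V$. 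The main obstacle is the spectral upgrade in the first step: the hypothesis only provides coercivity through a finite-codimensional projection $P$, whereas the Lyapunov--Schmidt decoupling requires coercivity on the complement of the (possibly strictly smaller) finite-dimensional kernel $V_0 = \ker D^2 F(0)$; once that upgrade is in place the infinite-dimensional difficulty is confined to the strongly convex $V_0^\perp$ direction, and the rest reduces to standard finite-dimensional compactness.
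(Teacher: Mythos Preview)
Your proof is correct, but it is considerably more elaborate than the paper's.  The paper proceeds directly with the projection $P$ given in the hypothesis, without any spectral upgrade or Lyapunov--Schmidt reduction: a second-order Taylor expansion combined with (\ref{3.8.15}) yields
\[
F(\psi)\ \ge\ \Cal{I}_\beta + \tfrac{C}{4}\|P\psi\|_V^2 - o(\|P^\perp\psi\|_V^2),
\]
from which $F(\psi_\nu)\to\Cal{I}_\beta$ immediately forces $\|P\psi_\nu\|_V\to 0$.  Since $P^\perp$ has finite-dimensional range, one extracts a convergent subsequence $P^\perp\psi_\nu\to\tilde\psi$, hence $\psi_\nu\to\tilde\psi$ in $V$; continuity gives $F(\tilde\psi)=\Cal{I}_\beta$, and the strict-minimum hypothesis (\ref{3.8.14}) then forces $\tilde\psi=0$.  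Your approach invests in two extra steps---upgrading the coercivity from $P$ to the projection off $\ker D^2F(0)$, and then running an implicit-function-theorem reduction to that kernel---whereas the paper simply lets the strict-minimum hypothesis absorb whatever slack remains in the finite-dimensional direction.  The payoff of your route is the structural information that the problem localizes to $\ker D^2F(0)$ via a $C^1$ solution map $\zeta(v)$, which can be useful for finer local analysis; the payoff of the paper's route is brevity.  One minor technical point in your write-up: the domain bookkeeping for $\zeta(v)$ (ensuring $v+\zeta(v)$ stays inside the ball where the Hessian bound holds, for all $v$ in the finite-dimensional ball) needs a nested-ball argument rather than the single radius $\rho$ you use, but this is routine.
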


\begin{proof}
Taylor's formula with remainder gives
\ben
F(\psi)=\Cal{I}_\beta+\frac{1}{2} D^2F(0)(\psi,\psi)+R(\psi),
\label{3.8.17}
\een
with
\ben
R(\psi)=\int_0^1 [D^2F(t\psi)-D^2F(0)](\psi,\psi)\, (1-t)\, dt
=o(\|\psi\|^2_V).
\label{3.8.18}
\een
Hence, if $\psi\in\Cal{B}_\rho$ and $\rho$ is small enough,
\ben
\aligned
F(\psi)&\geq\Cal{I}_\beta+\frac{C}{2}\|P\psi\|^2_V-o(\|\psi\|^2_V) \\
&\geq \Cal{I}_\beta+\frac{C}{4}\|P\psi\|^2_V-o(\|P^\perp \psi\|^2_V),
\endaligned
\label{3.8.19}
\een
where $P^\perp =I-P$ has finite dimensional range.  Hence the hypothesis
(\ref{3.8.16}) on $\psi_\nu$ implies
\ben
\|P\psi_\nu\|_V\longrightarrow 0.
\label{3.8.20}
\een
We need to show that $P^\perp \psi_\nu\rightarrow 0$ in $W=\text{Range}\,
P^\perp\subset V$.  The sequence $(P^\perp \psi_\nu)$ is a bounded sequence
in $W$, so $(\psi_\nu)$ has a subsequence (which we continue to denote
$(\psi_\nu)$) such that $P^\perp \psi_\nu\rightarrow \tilde{\psi}$.
Hence $\psi_\nu\rightarrow \tilde{\psi}$.
Now $F(\psi_\nu)\rightarrow \Cal{I}_\beta$
implies $F(\tilde{\psi})=\Cal{I}_\beta$.  The hypothesis (\ref{3.8.14}) then
gives $\tilde{\psi}=0$, and completes the proof.
\end{proof}

$\text{}$\newline
{\bf Remark.} In the setting of Proposition \ref{p3.8.3}, the range of
$P^\perp$ is the orthogonal complement of $T_uY$ in
\ben
\{\psi=\psi_0+i\psi_1\in T_uX:(L_+\psi_0,\psi_0)+(L_-\psi_1,\psi_1)=0\},
\label{3.8.21}
\een
which is a linear space, by (\ref{3.6.5})--(\ref{3.6.6}), and is finite
dimensional, given (\ref{3.8.6}), by (\ref{3.6.27}).

$\text{}$ \newline
{\bf Remark.} One setting where Proposition \ref{p3.8.2} applies is that of
Euclidean space, $M=\Rr^n$.  In this case, the uniqueness hypothesis
(\ref{3.8.4}) and the spectral hypothesis (\ref{3.8.6}) follow from Proposition
\ref{pa.3.3}.  In this case, orbital stability was established in
\cite{Wei86}.  Further applications of Proposition \ref{p3.8.2} are being
pursued in \cite{CMMT}.

\section{Exploration of symmetrization techniques}\label{s4}

As mentioned in the Introduction, works of \cite{Str} and \cite{BPL}
used a symmetrization technique to construct ground states on Euclidean
space, namely $F_\lambda$-minimizers in \cite{Str} and minimizers of
$\|\nabla u\|^2_{L^2}$ subject to the constraint (\ref{1.0.12}) in
\cite{BPL}.  Here we explore other applications of such a symmetrization
technique.

Behind this approach is a key rearrangement lemma.  We state this result
and say a little about how it has been
proved in \S{\ref{s4.1}}, and then proceed to applications in
\S\S{\ref{sa4}--\ref{sa5}}.

We use the technique to produce $F_\lambda$-minimizers
on hyperbolic space in \S{\ref{sa4}}.
Here, we make use of arguments from \cite{ChMa-hyp}, but with simplifications,
which allow us to completely avoid appeal to concentration-compactness
arguments.  We obtain a unified treatment of $F_\lambda$-minimizers
on hyperbolic space and on Euclidean space.

In \S{\ref{sa4x}}, we apply the symmetrization technique to the task
of maximizing the Weinstein functional.  For Euclidean space, this
provides a short and direct proof of existence of such maximizers.
The dilation structure of Euclidean space plays a crucial role,
and we note myriad examples of Riemannian manifolds for which the
Weinstein functional does not have a maximum.

In \S{\ref{sa5}}, we discuss the symmetrization approach to the existence
of energy minimizers.  In this case, this approach seems to stop short of
actually establishing the existence of such minimizers, though we do
obtain some interesting information.

\subsection{The rearrangement lemma}\label{s4.1}

Here is the key rearrangement lemma.

\begin{lemma}\label{la4.1}
If $M=\Rr^n$ or $\Cal{H}^n$,
replacing $u\in H^1(M)$ by its radial decreasing rearrangement
does not increase $\|\nabla u\|_{L^2}$.
\end{lemma}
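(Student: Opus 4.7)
The plan is to use the heat-semigroup characterization of the Dirichlet energy together with a Riesz-type rearrangement inequality for the heat kernel. First I would invoke the standard spectral-theoretic identity
\[
\|\nabla u\|_{L^2(M)}^2 = \lim_{t \to 0^+} \frac{1}{t}\Bigl(\|u\|_{L^2}^2 - (u, e^{t\Delta}u)_{L^2}\Bigr),
\]
valid on any complete, stochastically complete Riemannian manifold by monotone convergence applied to the spectral measure of $-\Delta$, and in particular on $\Rr^n$ and $\Cal{H}^n$. Since the decreasing rearrangement preserves $L^2$-norms, $\|u\|_{L^2} = \|u^*\|_{L^2}$, and it therefore suffices to prove that for each fixed $t > 0$,
\[
(u, e^{t\Delta}u)_{L^2} \le (u^*, e^{t\Delta}u^*)_{L^2},
\]
since passing $t \to 0^+$ then yields $\|\nabla u^*\|_{L^2} \le \|\nabla u\|_{L^2}$.

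Next I would reduce to the case $u \ge 0$. Because $u^* = |u|^*$ and the heat kernel $p_t(x,y)$ is nonnegative, one has $\Re(u, e^{t\Delta}u) \le (|u|, e^{t\Delta}|u|)$, so replacing $u$ by $|u|$ only increases the quantity we wish to bound. Writing the bilinear form via the kernel,
\[
(u, e^{t\Delta}u)_{L^2} = \iint_{M \times M} u(x)\, u(y)\, p_t(x,y)\, dV(x)\, dV(y),
\]
I would then invoke \emph{heat kernel monotonicity}: on both $\Rr^n$ and $\Cal{H}^n$, the heat kernel depends only on the geodesic distance, $p_t(x,y) = K_t(d(x,y))$, with $K_t : [0,\infty) \to (0,\infty)$ strictly decreasing. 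This is immediate from the Gaussian formula in the Euclidean case and from the explicit radial representation (Millson/Debiard--Gaveau) in the hyperbolic case.

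The main step is then to invoke a Riesz-type rearrangement inequality: for nonnegative $f, g \in L^2(M)$ and decreasing $K : [0,\infty) \to [0,\infty)$,
\[
\iint f(x)\, g(y)\, K(d(x,y))\, dV(x)\, dV(y) \le \iint f^*(x)\, g^*(y)\, K(d(x,y))\, dV(x)\, dV(y).
\]
On $\Rr^n$ this is the classical Brascamp--Lieb--Luttinger/Riesz inequality of \cite{BLL}, while on $\Cal{H}^n$ it is the extension proved by Beckner \cite{Bek}, adapting the spherical analogue of Baernstein--Taylor \cite{BeTa}. Applied with $f = g = |u|$ and $K = K_t$, this yields exactly the bilinear comparison displayed above.

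The main obstacle, already resolved in the literature, is the rearrangement inequality itself on $\Cal{H}^n$: it rests on a delicate two-point symmetrization argument that exploits the volume-preserving transitive action of $\mathrm{Isom}(\Cal{H}^n)$ together with the radial monotonicity of $K_t$, and is not a direct corollary of the Euclidean inequality. Once that inequality is cited as a black box alongside heat-kernel monotonicity, the assembly of the three ingredients---spectral limit, pointwise reduction to $|u|$, Riesz-type inequality---is essentially mechanical.
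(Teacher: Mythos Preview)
Your proposal is correct and follows essentially the same route the paper outlines: the paper does not give a self-contained proof but points to heat kernel monotonicity together with the Riesz-type rearrangement inequality of \cite{BLL} on $\Rr^n$ and its hyperbolic extension due to \cite{Bek}, which is exactly the pair of ingredients you assemble via the heat-semigroup identity for the Dirichlet energy. Your write-up is in fact more explicit than the paper's discussion, spelling out the spectral limit and the reduction to nonnegative $u$ that make the combination of those two black boxes work.
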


Proofs have been given in \cite{Str} and in \cite{BPL} when $M=\Rr^n$.
The result for $M=\Cal{H}^n$ was established in \cite{ChMa-hyp}.  The proof
requires two nontrivial ingredients.  One is heat kernel monotonicity.
This has been established, on all rank-one symmetric spaces, using exact
formulas for the heat kernel.

The other ingredient is an integral rearrangement inequality.
This rearrangement inequality holds for $M=\Rr^n$ or $\Cal{H}^n$.
In the former case, it is a consequence of a general rearrangement
inequality of \cite{BLL}, in the latter case, \cite{Bek} produced the
extension of such a rearrangement inequality to hyperbolic space.
The proof of the rearrangement inequality
requires that $M$ be a rank-one symmetric space, and further that $M$ 
possess reflection symmetry, across a totally geodesic hypersurface.
Such reflection symmetry fails for the other noncompact
rank-one symmetric spaces.

\subsection{Symmetrization approach to $F_\lambda$-minimizers}
\label{sa4}

Here we take $M$ to be either $n$-dimensional hyperbolic space $\Cal{H}^n$
or Euclidean space $\Rr^n$, $n\geq 2$.
As before, we define $\delta_0$ to be the smallest number satisfying
\ben
\text{Spec}(-\Delta)\subset [\delta_0,\infty).
\label{1}
\een
If $M=\Rr^n$, we have $\delta_0=0$.  If $M=\Cal{H}^n$,
we have $\delta_0=(n-1)^2/4$.
As in \S{\ref{s2}}, we assume
\ben
\lambda>-\delta_0,\quad p+1\in\Bigl(2,\frac{2n}{n-2}\Bigr),\quad
\beta\in (0,\infty).
\label{2}
\een
We aim to minimize
\ben
F_\lambda(u)=\|\nabla u\|^2_{L^2}+\lambda\|u\|^2_{L^2},
\label{3}
\een
subject to the constraint
\ben
J_p(u)=\int\limits_M |u|^{p+1}\, dV=\beta.
\label{4}
\een
This was accomplished in \S{\ref{s2}}, in the more general setting of weakly
homogeneous spaces.  Here we use the symmetrization method.

We turn to the task of finding the desired minimizer.
Note that (\ref{1})--(\ref{2}) imply
\ben
F_\lambda(u)\approx \|u\|^2_{H^1(M)},
\label{5}
\een
and in this setting we have the Sobolev embedding result
\ben
H^1(M)\subset L^q(M),\quad \forall\, q\in \Bigl[2,\frac{2n}{n-2}\Bigr],
\label{6}
\een
if $n\ge 3$, $\forall\, q\in [2,\infty)$ if $n=2$.
The results (\ref{5})--(\ref{6}) imply
\ben
\|u\|^2_{L^{p+1}}\le CF_\lambda(u),
\label{14}
\een
so
\ben
I_\beta=\inf\, \{F_\lambda(u):J_p(u)=\beta\}>0.
\label{15}
\een
Let $u_\nu\in H^1(M)$ satisfy
\ben
J_p(u_\nu)=\beta,\quad F_\lambda(u_\nu)\le I_\beta+\frac{1}{\nu}.
\label{16}
\een
Passing to a subsequence, which we continue to denote $(u_\nu)$, we have
\ben
u_\nu\longrightarrow u\in H^1(M),\quad \text{converging weakly.}
\label{17}
\een
Rellich's theorem gives
\ben
H^1(\Omega)\hookrightarrow L^{p+1}(\Omega)\ \text{ compact,}
\label{18}
\een
for all smoothly bounded $\Omega\subset M$, as long as $p+1$ satisfies
(\ref{2}), so
\ben
u_\nu\longrightarrow u,\ \text{ in $L^{p+1}(\Omega)$ norm,}
\label{19}
\een
for all such $\Omega\subset M$.
Fix a base point $o\in M$, and replace $u_\nu$ by its radial decreasing
rearrangement.  By Lemma \ref{la4.1}, this replacement does not
increase $\|\nabla u_\nu\|_{L^2}$.  On the other hand,
such a replacement clearly leaves $\|u_\nu\|_{L^2}$ fixed, hence lowers
$F_\lambda(u_\nu)$.  It also leaves $J_p(u_\nu)$ fixed.
Thus we can assume $u_\nu(x)\ge 0$, that it is rotationally symmetric about
$o\in M$, that it is monotone in $\text{dist}(x,o)$, and that (\ref{16}),
(\ref{17}) and (\ref{19}) hold.  We need to show that
\ben
J_p(u)=\beta,\quad \text{i.e., }\ \|u\|_{L^{p+1}}=\beta^{1/(p+1)}.
\label{20}
\een
Clearly $J_p(u)\le \beta$ and $F_\lambda(u)\le I_\beta$.  Given (\ref{20}),
it would follow from (\ref{15}) and (\ref{17}) that
\ben
F_\lambda(u)=I_\beta,
\label{21}
\een
and also that $H^1$-norm convergence holds in (\ref{17}).

To demonstrate (\ref{20}), let us set
$\|u\|_{H^1}=\|\nabla u\|_{L^2(M)}+\|u\|_{L^2(M)}$,
$H^1_r(M)=\text{ set of radially symmetric functions in } H^1(M)$,
$M_R=M\setminus B_R(o)$, and
\ben
J_Rv=v\Bigr|_{M_R}.
\label{22}
\een
We assert the following.

\begin{lemma}\label{la4.2}
Given $q>2$, we have
\ben
\lim\limits_{R\rightarrow\infty}\, \|J_R\|_{\Cal{L}(H^1_r,L^q)}=0.
\label{23}
\een
\end{lemma}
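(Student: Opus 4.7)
The plan is to establish a Strauss-type pointwise decay estimate for radial $H^1$ functions on $M$, and then convert it into a small-operator-norm estimate for $J_R$ by trading two powers of $|u|$ for a pointwise bound and leaving the remaining two powers as an $L^2$ integral. Since the argument needs to treat $M=\Rr^n$ and $M=\Cal{H}^n$ simultaneously, I would work in geodesic polar coordinates around $o$, writing $A(r)$ for the volume of the geodesic sphere of radius $r$; thus $A(r)=\omega_{n-1}r^{n-1}$ on $\Rr^n$ and $A(r)=\omega_{n-1}(\sinh r)^{n-1}$ on $\Cal{H}^n$. In both cases $A$ is smooth, strictly increasing, and tends to $\infty$ with $r$, which is really all that will be used.

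Next, for $u\in H^1_r(M)$, identified with a function $u(r)$, density of smooth compactly supported radial functions lets me assume $u(r)\to 0$ as $r\to\infty$ and write $u(r)^2=-2\int_r^\infty u(s)u'(s)\,ds$. Multiplying by $A(r)$, using $A(r)\le A(s)$ for $s\ge r$, and applying Cauchy--Schwarz in the measure $A(s)\,ds$ gives
\begin{equation*}
A(r)\,u(r)^2 \;\le\; 2\int_r^\infty |u(s)|\,|u'(s)|\,A(s)\,ds \;\le\; 2\,\|u\|_{L^2(M)}\,\|\nabla u\|_{L^2(M)} \;\le\; \|u\|_{H^1}^2.
\end{equation*}
This yields the pointwise bound $|u(r)|\le A(r)^{-1/2}\,\|u\|_{H^1}$ for every $r>0$ and every $u\in H^1_r(M)$.

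Finally, to bound $\|J_R u\|_{L^q}$ for $q>2$, I would split $|u|^q=|u|^{q-2}\cdot|u|^2$ and use the pointwise estimate on the first factor, while the second integrates to at most $\|u\|_{L^2}^2$:
\begin{equation*}
\|J_R u\|_{L^q}^q \;=\; \int_R^\infty |u(r)|^q\,A(r)\,dr \;\le\; \Bigl(\sup_{r\ge R}|u(r)|\Bigr)^{q-2}\|u\|_{L^2}^2 \;\le\; A(R)^{-(q-2)/2}\,\|u\|_{H^1}^q.
\end{equation*}
Taking $q$-th roots gives $\|J_R\|_{\Cal{L}(H^1_r,L^q)}\le A(R)^{-(q-2)/(2q)}$. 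Because $q>2$ makes the exponent positive and $A(R)\to\infty$ (polynomially on $\Rr^n$, exponentially on $\Cal{H}^n$), this upper bound vanishes as $R\to\infty$, which is (\ref{23}).

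There is no serious obstacle: the heart of the argument is the classical Strauss radial lemma, and the only features of $M$ required are the monotonicity and unboundedness of the spherical area function $A(r)$, which hold uniformly for $\Rr^n$ and $\Cal{H}^n$. If anything is subtle, it is noting that one does not need the sharp polynomial or exponential rate of $A$, only its divergence, so the proof works verbatim in both geometries.
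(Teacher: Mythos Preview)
Your proof is correct. Both your argument and the paper's reduce (\ref{23}) to a uniform pointwise decay estimate on $M_R$, followed by the same interpolation $|u|^q=|u|^{q-2}\cdot|u|^2$ (this is exactly the paper's inequality (\ref{26})). The difference lies in how the pointwise decay is obtained. The paper first cuts off near $\partial B_R$ to land in $H^1_{0,r}(M_R)$ and then bounds $|v(r)|\le\int_R^\infty|v'|\,ds\le\bigl(\int_R^\infty A(s)^{-1}\,ds\bigr)^{1/2}\|\nabla v\|_{L^2}$ (Lemma~\ref{la4.3}); since $\int_R^\infty A^{-1}$ diverges when $M=\Rr^2$, that case must be handled separately via the Strauss radial lemma (Lemma~\ref{la4.4}). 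You instead run the Strauss trick $u(r)^2=-2\int_r^\infty uu'$ from the outset, obtaining $|u(r)|\le A(r)^{-1/2}\|u\|_{H^1}$ in a single step that needs only the monotonicity and divergence of $A$, hence works uniformly for all $n\ge 2$ and for both $\Rr^n$ and $\Cal{H}^n$. Your route is thus more economical---no cutoff, no case split---while the paper's approach isolates the role of the integrability of $A^{-1}$ but at the cost of an extra lemma to cover $\Rr^2$.
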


Given this lemma, we have for the radial sequence $(u_\nu)$ satisfying
(\ref{16}) that, for each $\ep>0$, there exists $R<\infty$ such that
\ben
\int\limits_{M_R} |u_\nu|^{p+1}\, dV\le\ep,\quad
\forall\, \nu,
\label{24}
\een
and then (\ref{20}) follows from (\ref{19}).

It remains to prove Lemmma \ref{la4.2}.  If we show that
\ben
\lim\limits_{R\rightarrow\infty}\, \|J_R\|_{\Cal{L}(H^1_r,L^\infty)}=0,
\label{25}
\een
then, since for $q>2$
\ben
\aligned
\int\limits_{M_R}|v|^q\, dV
&\le \|v\|^{q-2}_{L^\infty(M_R)} \int\limits_{M_R}|v|^2\, dV \\
&\le \|J_Rv\|^{q-2}_{L^\infty} \|v\|^2_{H^1},
\endaligned
\label{26}
\een
we have (\ref{23}).

It remains to prove (\ref{25}).  Here is one approach.  We can replace $R$ by $R+1$.
Take $\chi_R\in\text{Lip}(M),\ \chi_R(x)=0$ for $x\in B_R(o),\ \chi_R(x)=
\text{dist}(x,B_R(o))$ for $x\in B_{R+1}(o)$, $\chi_R(x)=1$ for $x\in M_{R+1}$.
Then, for $v\in H^1_r(M)$, we have
\ben
\chi_Rv\in H^1_{0,r}(M_R)=H^1_0(M_R)\cap H^1_r(M),
\een
and
\ben
\|\nabla(\chi_R v)\|_{L^2}\le \|v\|_{H^1}.
\label{27}
\een
Hence, in all cases except $M=\Rr^2$, (\ref{25}) is a consequence of the
following.

\begin{lemma}\label{la4.3}
Except for $M=\Rr^2$, we have
\ben
\|v\|_{L^\infty}\le \eta(R)\|\nabla v\|_{L^2},\quad \forall v\in
H^1_{0,r}(M_R),
\label{28}
\een
with
\ben
\lim\limits_{R\rightarrow\infty}\, \eta(R)=0.
\label{29}
\een
\end{lemma}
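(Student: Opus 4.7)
The plan is a classical radial Sobolev argument, organized around the fact that radial functions on $M$ are one-dimensional objects with a weight coming from the spherical volume.  Write $v(x)=f(r)$ with $r=\dist(x,o)$, and let $A(s)$ denote the radial volume density on $M$, so that for any integrable radial $g$,
\ben
\int_M g(r)\, dV = \omega_{n-1}\int_0^\infty g(s)A(s)\, ds,
\een
with $A(s)=s^{n-1}$ if $M=\Rr^n$ and $A(s)=\sinh^{n-1} s$ if $M=\Cal{H}^n$.  By density of smooth compactly supported radial functions in $H^1_{0,r}(M_R)$ (apply a radial cutoff $\chi_T$ that is $1$ on $B_T(o)$ and $0$ outside $B_{T+1}(o)$ and pass to the limit), we may assume $f\in C_c^\infty([R,\infty))$ and that $f(r)=0$ for $r$ large.

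The core estimate is then an application of Cauchy--Schwarz with the weight $A(s)$.  For any $r\ge R$,
\ben
|f(r)| = \Bigl| \int_r^\infty f'(s)\, ds\Bigr| \le
\Bigl(\int_r^\infty |f'(s)|^2 A(s)\, ds\Bigr)^{1/2}
\Bigl(\int_r^\infty A(s)^{-1}\, ds\Bigr)^{1/2}.
\een
The first factor is controlled by $\|\nabla v\|_{L^2(M)}$ up to a dimensional constant, since $|\nabla v|^2 = |f'(r)|^2$ for radial $v$.  Set
\ben
\eta(R)^2 = C_n \int_R^\infty A(s)^{-1}\, ds,
\een
with $C_n$ absorbing $\omega_{n-1}^{-1}$.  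Monotonicity of the tail integral then gives
\ben
\|v\|_{L^\infty(M_R)} \le \eta(R)\, \|\nabla v\|_{L^2(M)},
\een
as desired, provided $\int^\infty A(s)^{-1}\, ds<\infty$.

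It is precisely at this integrability condition that the exclusion $M=\Rr^2$ enters, and this is really the only ``content'' of the lemma.  For $M=\Rr^n$ with $n\ge 3$ one has $A(s)^{-1}=s^{-(n-1)}$, integrable at infinity with $\eta(R)\sim R^{-(n-2)/2}$; for $M=\Rr^2$, $A(s)^{-1}=s^{-1}$ is not integrable, which is why the case is excluded.  For $M=\Cal{H}^n$ with any $n\ge 2$, $A(s)^{-1}=\sinh^{-(n-1)} s$ decays exponentially, giving $\eta(R)=O(e^{-(n-1)R/2})$.  In all allowed cases $\eta(R)\to 0$, establishing \eqref{29}.  The main (and essentially only) obstacle is purely a matter of bookkeeping the radial density $A(s)$ correctly in the two geometries; no concentration or compactness argument is needed.
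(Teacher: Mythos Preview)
Your proof is correct and follows essentially the same approach as the paper: write $v$ as a one-variable function of the radial coordinate, apply the fundamental theorem of calculus and Cauchy--Schwarz with weight $A(s)$, and identify $\eta(R)^2$ with the tail integral $\int_R^\infty A(s)^{-1}\,ds$, whose finiteness fails only for $M=\Rr^2$. The paper's argument is identical in structure (with $A(r)$ normalized as the full spherical area rather than separating out $\omega_{n-1}$); your version simply adds the density reduction and the explicit decay rates of $\eta(R)$.
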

\begin{proof} Take $v\in H^1_{0,r}(M_R)$.  Slightly abusing notation, we write
$v(x)=v(r)$.  Then
\ben
\|\nabla v\|^2_{L^2}=\int_R^\infty |v'(r)|^2 A(r)\, dr,
\label{30}
\een
where
\ben
A(r)=(n-1)\text{-dimensional area of } \{x\in M:\text{dist}(x,o)=r\}.
\label{31}
\een
Now
\ben
\aligned
\|v\|_{L^\infty}&\le \int_R^\infty |v'(r)|\, dr \\
&=\int_r^\infty |v'(r)|A(r)^{1/2}A(r)^{-1/2}\, dr \\
&\le \eta(R)\|\nabla v\|_{L^2},
\endaligned
\label{32}
\een
by Cauchy's inequality, where
\ben
\eta(R)=\Bigl(\int_R^\infty \frac{dr}{A(r)}\Bigr)^{1/2}.
\label{33}
\een
This gives (\ref{28}), except when $M=\Rr^2$.
In fact, $A(r)=A_n r^{n-1}$ when $M=\Rr^n$, and it blows up exponentially
when $M$ is $\Cal{H}^n$.
\end{proof}

Finally, the case $M=\Rr^2$ of (\ref{25}) follows from the next result,
given in \cite{BPL} as Radial Lemma A.II, which in turn follows \cite{Str}.

\begin{lemma}\label{la4.4}
If $M=\Rr^n,\ n\ge 2$, then, for $R\ge 1$,
\ben
\sup\limits_{|x|=R}\, |v(x)|\le C_n R^{-(n-1)/2} \|v\|_{H^1},\quad
\forall\, v\in H^1_{0,r}(M_1).
\label{34}
\een
\end{lemma}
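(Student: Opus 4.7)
The plan is to establish \eqref{34} by the classical radial Sobolev argument of Strauss. By density, it suffices to prove the estimate for smooth, compactly supported radial $v$ on $M_1 = \{|x|\geq 1\}$, extended by zero across $\{|x|=1\}$; the inequality then passes to all $v \in H^1_{0,r}(M_1)$ by completion. Writing $v(x) = v(r)$ with $r = |x|$, the goal is to control $r^{n-1} v(r)^2$ at each fixed $r \geq 1$ and to extract \eqref{34} by taking square roots.

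The core identity is obtained by integrating the derivative of $\rho^{n-1} v(\rho)^2$ from $r$ to $\infty$:
\[
r^{n-1} v(r)^2 \;=\; -\int_r^\infty \frac{d}{d\rho}\bigl[\rho^{n-1} v(\rho)^2\bigr]\,d\rho
\;=\; -\int_r^\infty \bigl[(n-1)\rho^{n-2} v(\rho)^2 + 2\rho^{n-1} v(\rho)\,v'(\rho)\bigr]\,d\rho,
\]
the boundary contribution at $\infty$ vanishing since $v$ has compact support. Since $r \geq 1$ we have $\rho^{n-2} \leq \rho^{n-1}$ on $[r,\infty)$, and recalling that for radial $v$ on $\Rr^n$ one has $\|v\|_{L^2}^2 = |S^{n-1}| \int_0^\infty \rho^{n-1} v(\rho)^2\,d\rho$ and $\|\nabla v\|_{L^2}^2 = |S^{n-1}| \int_0^\infty \rho^{n-1} |v'(\rho)|^2\,d\rho$, the first integral is bounded by $C_n \|v\|_{L^2}^2$, while Cauchy--Schwarz applied to the second gives
\[
\int_r^\infty \rho^{n-1} |v(\rho)|\,|v'(\rho)|\,d\rho \;\leq\; \Bigl(\int_r^\infty \rho^{n-1} v(\rho)^2\,d\rho\Bigr)^{1/2} \Bigl(\int_r^\infty \rho^{n-1} |v'(\rho)|^2\,d\rho\Bigr)^{1/2} \;\leq\; C_n \|v\|_{L^2}\|\nabla v\|_{L^2}.
\]
Combining, $r^{n-1} v(r)^2 \leq C_n(\|v\|_{L^2}^2 + \|v\|_{L^2}\|\nabla v\|_{L^2}) \leq C_n \|v\|_{H^1}^2$, and taking square roots yields \eqref{34}.

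I do not foresee any real obstacle; the only minor technical points are justifying the boundary term at $\infty$ and making sense of the pointwise value of $v$ on $\{|x|=R\}$. Both hold trivially on the smooth compactly supported radial class, and the estimate itself (applied to two nearby radii) furnishes H\"older continuity of $v$ on $\{r \geq 1\}$, so pointwise representatives are well defined and the bound extends by density to all of $H^1_{0,r}(M_1)$. Note that no use is made of the Dirichlet condition at $r=1$: the inequality is really an intrinsic radial decay fact, and the exponent $-(n-1)/2$ arises transparently from the factor $\rho^{n-1}$ in the polar-coordinate volume element.
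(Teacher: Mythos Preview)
Your argument is correct and is precisely the classical Strauss radial lemma proof. The paper does not supply its own proof of this lemma; it simply cites \cite{BPL} (Radial Lemma A.II) and \cite{Str}, and your computation is exactly the argument found there. One cosmetic remark: the term $-(n-1)\int_r^\infty \rho^{n-2}v(\rho)^2\,d\rho$ is nonpositive and can be dropped outright, yielding the slightly sharper form $r^{n-1}v(r)^2 \le 2\int_r^\infty \rho^{n-1}|v||v'|\,d\rho \le C_n\|v\|_{L^2}\|\nabla v\|_{L^2}$, which is how Strauss states it; but your version with the $\|v\|_{L^2}^2$ term is of course also valid and gives the same $H^1$ bound.
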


\subsection{Symmetrization approach to Weinstein functional maximization}
\label{sa4x}

Complementing \S{\ref{sa4}}, we note how the symmetrization procedure
allows for a simplified proof of the existence of a maximum for the Weinstein
functional $W(u)$ in (\ref{1.3A.3}),
in the Euclidean space setting, $\Rr^n$ (for $n\geq 2$).  Recall,
\ben
W(u)=\frac{\|u\|_{L^{p+1}}^{p+1}}{\|u\|_{L^2}^\alpha \|\nabla u\|^\beta_{L^2}},
\label{A.4X.0}
\een
with $\alpha=2-(n-2)(p-1)/2,\ \beta=n(p-1)/2$.
We keep the requirement (\ref{2}) on $p$.
The Gagliardo-Nirenberg estimate implies $W(u)$ is bounded from above.
Denote its supremum by $W_{\max}$.

Now, if $u_\nu\in H^1(\Rr^n)$ and $W(u_\nu)\rightarrow W_{\max}$, then
$W(u^*_\nu)\ge W(u_\nu)$ if $u^*_\nu$ is the radial decreasing rearrangement
of $u_\nu$, so we need only maximize $W(u)$ over $H^1_r(\Rr^n)$.  For the
next step, we follow the standard argument and use the fact that $W(u)$
is invariant under $u\mapsto au$ and $u(x)\mapsto u(bx)$ to impose the
normalization
\ben
\|u_\nu\|_{L^2}=1,\quad \|\nabla u_\nu\|_{L^2}=1,
\label{A.4X.1}
\een
so
\ben
\|u_\nu\|_{L^{p+1}}\rightarrow W^{1/(p+1)}_{\max}.
\label{A.4X.2}
\een
If we pass to a subsequence such that $u_\nu\rightarrow u$ $\text{weak}^*$ in
$H^1(\Rr^n)$, results from \S{A.4} yield $u_\nu\rightarrow u$ in norm in
$L^{p+1}(\Rr^n)$.  Also $\|u\|_{L^2}\le 1$ and $\|\nabla u\|_{L^2}\le 1$, so
\ben
W(u)\ge W_{\max}.
\label{A.4X.3}
\een
This requires $W(u)=W_{\max}$ (hence $\|u\|_{L^2}=\|\nabla u\|_{L^2}=1$,
and therefore $u_\nu\rightarrow u$ in norm in $H^1(\Rr^n)$.)
We have the desired maximizer.
A computation of
\ben
\frac{d}{d\tau} W(u+\tau v)\Bigr|_{\tau=0}
=\frac{(N(u),v)}{\|u\|^{2\alpha}_{L^2}\|\nabla u\|^{2\beta}_{L^2}}
\label{A.4X.4}
\een
shows that such a maximizer $u$ solves the equation
\ben
\Delta u-\lambda u+Ku^p=0,
\label{A.4X.5}
\een
with
\ben
\lambda=\frac{\alpha}{\beta}\,
\frac{\|\nabla u\|^2_{L^2}}{\|u\|^2_{L^2}},\quad
K=\frac{p+1}{\beta}\,
\frac{\|\nabla u\|^2_{L^2}}{\|u\|^{p+1}_{L^{p+1}}},
\label{A.4X.6}
\een
hence, with the normalization imposed above,
\ben
\lambda=\frac{\alpha}{\beta},\quad K=\frac{p+1}{\beta W_{\max}}.
\label{A.4X.7}
\een

By contrast, note the following non-existence result.

\begin{proposition}\label{pa.4x.1}
Let $\Omega\subset\Rr^n$ be a nonempty open set
such that $\Rr^n\setminus\Omega$ has positive capacity.  Then
\ben
\{W(u):u\in H^1_0(\Omega)\}
\label{A.4X.8}
\een
does not achieve a maximum.
\end{proposition}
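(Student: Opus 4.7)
My plan is to first show that the supremum $W^*=\sup\{W(u):u\in H^1_0(\Omega)\}$ coincides with the global maximum $W_{\max}$ of $W$ over $H^1(\Rr^n)$, and then derive a contradiction from the assumption that this supremum is attained. The inequality $W^*\leq W_{\max}$ is immediate, since extension by zero embeds $H^1_0(\Omega)$ into $H^1(\Rr^n)$ while preserving each of the three norms appearing in $W$. For the reverse inequality, I would exploit scale and translation invariance of $W$: fix a ball $B(x_0,r)\subset\Omega$, take the maximizer $u^*\in H^1(\Rr^n)$ just constructed, and form the rescaling $u^*_\ep(x)=u^*((x-x_0)/\ep)$, for which $W(u^*_\ep)=W_{\max}$. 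Multiplying by a fixed cutoff $\chi\in C^\infty_c(B(x_0,r))$ that is identically $1$ near $x_0$ produces $\chi u^*_\ep\in H^1_0(\Omega)$, and the decay of $u^*$ at infinity (which follows from the analysis of \S{\ref{s2.5}} applied to the solution $u^*$ of (\ref{A.4X.5}) with $\lambda>0$) drives $\|u^*_\ep-\chi u^*_\ep\|_{H^1}\to 0$ as $\ep\to 0$, whence $W(\chi u^*_\ep)\to W_{\max}$.

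Suppose next, for contradiction, that $u_0\in H^1_0(\Omega)$ achieves $W(u_0)=W_{\max}$. Since $|u_0|$ again belongs to $H^1_0(\Omega)$ and satisfies $W(|u_0|)\geq W(u_0)$, we may assume $u_0\geq 0$. Extending by zero yields $\tilde u_0\in H^1(\Rr^n)$ with $W(\tilde u_0)=W_{\max}$, so $\tilde u_0$ is also a global maximizer of $W$ on $H^1(\Rr^n)$. The Euler--Lagrange computation of (\ref{A.4X.4})--(\ref{A.4X.7}) then gives
\ben
-\Delta\tilde u_0+\lambda\tilde u_0-K\tilde u_0^p=0
\een
on $\Rr^n$, for some $\lambda,K>0$. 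Local elliptic regularity (Proposition \ref{P:local-reg-prop}) and the Harnack-based positivity argument used in the proof of Proposition \ref{p2.4.1} apply to $\tilde u_0$, since $\Rr^n$ is weakly homogeneous, and force $\tilde u_0$ to be continuous on $\Rr^n$ and strictly positive at every point.

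The positive capacity hypothesis now yields the contradiction. Recall the standard characterization of $H^1_0(\Omega)$ as the subspace of $H^1(\Rr^n)$ whose quasi-continuous representatives vanish quasi-everywhere on $\Rr^n\setminus\Omega$. Since $\tilde u_0$ is continuous, it is already quasi-continuous, so the set $\{x\in\Rr^n\setminus\Omega:\tilde u_0(x)\neq 0\}$ has zero $H^1$-capacity. But the positivity proved in the previous step identifies this set with $\Rr^n\setminus\Omega$ itself, forcing $\Rr^n\setminus\Omega$ to have capacity zero, contrary to hypothesis.

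The step I expect to be the main obstacle is establishing $W^*=W_{\max}$ in the first paragraph: making the cutoff-truncation argument rigorous requires quantitative decay estimates on the $\Rr^n$-maximizer $u^*$, which one must extract from the earlier regularity and decay machinery. Once $W^*=W_{\max}$ is in hand, the remainder is a rigidity assertion, namely that any actual maximizer in $H^1_0(\Omega)$ would produce a global, everywhere-positive classical solution of the limiting PDE on $\Rr^n$, incompatible with vanishing on any set of positive capacity.
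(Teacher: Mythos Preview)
Your proposal is correct and follows essentially the same route as the paper: establish $W^*=W_{\max}$ via extension by zero and a dilation-plus-cutoff argument, then derive a contradiction by showing that any maximizer in $H^1_0(\Omega)$ extends to a global maximizer on $\Rr^n$, which must solve (\ref{A.4X.5}) and hence be strictly positive everywhere by Harnack. Your final paragraph, invoking the quasi-continuous representative characterization of $H^1_0(\Omega)$, makes explicit the step the paper compresses into ``yielding a contradiction''; this is the correct way to use the positive-capacity hypothesis when $\Rr^n\setminus\Omega$ might have Lebesgue measure zero.
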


\begin{proof}
Denote the supremum of (\ref{A.4X.8}) by $W^\Omega_{\max}$.  Then
$W^\Omega_{\max}\le W^{\Rr^n}_{\max}$, since the supremum of (\ref{A.4X.8})
is over a subset of $H^1(\Rr^n)$.  On the other hand, taking a maximizer
of $W(u)$ over $H^1(\Rr^n)$, dilating it, to be highly concentrated near a
point $p\in\Omega$, and using a cutoff, we see that $W^\Omega_{\max}\ge
W^{\Rr^n}_{\max}$, so in fact $W^\Omega_{\max}=W^{\Rr^n}_{\max}$.
If $v\in H^1_0(\Omega)$ and $W(v)=W^\Omega_{\max}$, we can replace $v$
by $|v|$ and arrange $v\ge 0$.  Then extending $v$ by $0$ on $\Rr^n\setminus
\Omega$ would yield $u\in H^1(\Rr^n)$ such that $W(u)=W^{\Rr^n}_{\max}$.
Then $u\ge 0$ would solve (\ref{A.4X.5}).
By Harnack's inequality, that would
force $u>0$ on $\Rr^n$, yielding a contradiction.
\end{proof}

Turning to the setting of hyperbolic space $\Cal{H}^n$, we do not have
dilations, and cannot achieve the normalization (\ref{A.4X.1}), when taking
$u_\nu\in H^1(\Cal{H}^n)$ such that $W(u_\nu)\rightarrow W_{\max}$.
We can arrange
that
\ben
\|\nabla u_\nu\|_{L^2}=1,
\label{A.4X.9}
\een
which implies $\|u_\nu\|_{L^2}$ and $\|u_\nu\|_{L^{p+1}}$ are bounded.
Again, $u_\nu$ can be arranged to be radial (and decreasing).
Take a subsequence $u_\nu\rightarrow u$ $\text{weak}^*$ in $H^1(M)$.
From here, there are two scenarios to consider.  After perhaps passing
to a further subsequence, either
\ben
\|u_\nu\|_{L^2}\longrightarrow A>0,\quad \text{(Case I)},
\label{A.4X.10}
\een
or
\ben
\|u_\nu\|_{L^2}\longrightarrow 0,\qquad \text{(Case II).}
\label{A.4X.11}
\een

In Case I, we have
\ben
\|u_\nu\|^{p+1}_{L^{p+1}}\longrightarrow A^\alpha W_{\max},
\label{A.4X.12}
\een
and $u_\nu\rightarrow u$ in $L^{p+1}$-norm, so $\|u\|^{p+1}_{L^{p+1}}=
A^\alpha W_{\max}$.  Also, $\|u\|_{L^2}\le A$ and $\|\nabla u\|_{L^2}\le 1$,
so
\ben
W(u)\geq \frac{A^\alpha W_{\max}}{A^\alpha}=W_{\max}.
\label{A.4X.13}
\een
Hence
\ben
W(u)=W_{\max},\quad \|u\|_{L^2}=A,\quad \|\nabla u\|_{L^2}=1,
\label{A.4X.14}
\een
so $u_\nu\rightarrow u$ in $H^1$-norm, and we have a Weinstein functional
maximizer.  It solves the PDE (\ref{A.4X.5}), with $\lambda$ and $K$ given by
(\ref{A.4X.6}), i.e., in this case,
\ben
\lambda=\frac{\alpha}{\beta}\, \frac{1}{A^2},\quad
K=\frac{p+1}{\beta A^\alpha W_{\max}}.
\label{A.4X.15}
\een

In Case II, we have
\ben
\|u_\nu\|_{L^{p+1}}\longrightarrow 0,\quad \text{and }\ u=0.
\label{A.4X.16}
\een
In such a case $(u_\nu)$ does not converge to a $W$-maximizer.  Note that,
if there is a $W$-maximizer $u$, there must be a sequence $u_\nu$ satisfying
(\ref{A.4X.9}), $W(u_\nu)\rightarrow W_{\max}$, and (\ref{A.4X.10})
(just take $u_\nu\equiv u$).  Thus we pose the following
question. For $M=\Cal{H}^n$, is there a sequence $u_\nu\in H^1_r(M)$
satisfying (\ref{A.4X.9}) and $W(u_\nu)\rightarrow W_{\max}$, such that
(\ref{A.4X.10}) holds, or must (\ref{A.4X.11}) hold?

In connection with this, we note that part of the proof of
Proposition \ref{pa.4x.1} extends to give
\ben
W^\Omega_{\max}\ge W^{\Rr^n}_{\max},
\label{A.4X.17}
\een
for any Riemannian manifold with boundary $\Omega$, where $W^\Omega_{\max}$
is the supremum of (\ref{A.4X.8}).  It is tempting to conjecture that
\ben
W(u)<W^{\Rr^n}_{\max},\quad \forall\, u\in H^1(\Cal{H}^n),
\label{A.4X.18}
\een
and hence $W_{\max}$ is not achieved in $H^1(\Cal{H}^n)$.

On the other hand, there are Riemannian manifolds with boundary $\Omega$
such that
\ben
W^\Omega_{\max}>W^{\Rr^n}_{\max}.
\label{A.4X.19}
\een
One can, for example, let $M$ be a compact, connected Riemannian manifold
without boundary and let $\Omega=M\setminus B$, where $B\subset M$ is a small
ball.  It would be interesting to know whether $W^\Omega_{\max}$ can be
achieved in such cases.

\subsection{Symmetrization approach to energy minimizers}\label{sa5}

We retain the setting of \S{\ref{sa4}}, and assume $M$ is a (noncompact)
$n$-dimensional, rank-one symmetric space with reflection symmetry, i.e.,
$M=\Rr^n$ or $\Cal{H}^n$ ($n\geq 2$).
We require on $p$ the more stringent condition
\ben
p\in \Bigl(1,1+\frac{4}{n}\Bigr).
\label{C.0}
\een
We fix $\beta>0$,
and pick $u_\nu\in H^1(M)$ such that
\ben
Q(u_\nu)=\beta,\quad E(u_\nu)\leq \Cal{I}_\beta+\frac{1}{\nu},
\label{C.0A}
\een
with
\ben
\aligned
Q(u)&=\|u\|^2_{L^2},\quad
E(u)=\frac{1}{2}\|\nabla u\|^2_{L^2}-\frac{1}{p+1}\|u\|^{p+1}_{L^{p+1}}, \\
\Cal{I}_\beta&=\inf\, \{E(u):u\in H^1(M),\, Q(u)=\beta\}.
\endaligned
\een
As seen in \S{\ref{s3}},
this leads to bounds
\ben
\|u_\nu\|_{H^1},\ \|u_\nu\|_{L^{p+1}}\le K<\infty.
\label{C.1}
\een
To proceed, fix a base point $o\in M$.
We make use of Lemma \ref{la4.1}, which implies that, if $M=\Rr^n$ or $\Cal{H}^n$,
replacing $u_\nu$ by its radial decreasing rearrangement does not increase
$\|\nabla u_\nu\|_{L^2}$.  Also, such a replacement leaves $Q(u_\nu)$
invariant.  Thus we can assume our
minimizing sequence $(u_\nu)$ consists of such radial decreasing functions.
Passing to a subsequence, we have
\ben
u_\nu\longrightarrow u,\quad \text{weak}^*\ \text{ in }\ H^1(M),
\label{C.2}
\een
hence $\text{weak}^*$ in $L^2(M)$ and in $L^{p+1}(M)$.  The limit $u$
is radial and decreasing.  The next result provides valuable information
about $E(u)$.

\begin{proposition}\label{pc.2}
For such a sequence $(u_\nu)$, we have
\ben
u_\nu\longrightarrow u\ \text{ in }\ L^{p+1}(M)\text{-norm.}
\label{C.3}
\een
\end{proposition}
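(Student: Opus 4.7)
\textbf{Proof plan for Proposition \ref{pc.2}.} The plan is to exploit the fact that after radial rearrangement, the minimizing sequence $(u_\nu)$ lies in the radial Sobolev space $H^1_r(M)$, and is bounded there by (\ref{C.1}). This lets me combine a uniform tail estimate for radial functions with Rellich compactness on bounded balls, which together upgrade the weak $L^{p+1}$ convergence in (\ref{C.2}) to norm convergence.

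First, I will fix $\epsilon>0$ and use Lemma \ref{la4.2} with $q=p+1>2$ (this is legitimate since $p>1$). The lemma provides an $R=R(\epsilon)$ such that
\[
\|J_R\|_{\Cal{L}(H^1_r,L^{p+1})} \leq \epsilon/(2K),
\]
where $K$ is the uniform $H^1$-bound from (\ref{C.1}). In particular,
\[
\|u_\nu\|_{L^{p+1}(M\setminus B_R(o))} \leq \epsilon/2\quad\text{for every }\nu.
\]
The weak limit $u$ is again radial and lies in $H^1(M)$, so by the same estimate (or simply by the absolute continuity of the integral for $u\in L^{p+1}(M)$), after possibly enlarging $R$ we also have $\|u\|_{L^{p+1}(M\setminus B_R(o))}\leq \epsilon/2$.

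Next, on the fixed bounded set $B_R(o)$, the range condition (\ref{C.0}) gives $p+1<2+4/n<2n/(n-2)$ (interpreted as $p+1<\infty$ when $n=2$), so by the Rellich embedding (\ref{18}), passing if necessary to the subsequence already extracted for (\ref{C.2}), we have $u_\nu \to u$ in $L^{p+1}(B_R(o))$-norm. Combining the three estimates via the triangle inequality,
\[
\|u_\nu-u\|_{L^{p+1}(M)} \leq \|u_\nu-u\|_{L^{p+1}(B_R(o))} + \|u_\nu\|_{L^{p+1}(M\setminus B_R(o))} + \|u\|_{L^{p+1}(M\setminus B_R(o))},
\]
the right-hand side is bounded by $2\epsilon$ for $\nu$ large. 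Since $\epsilon$ is arbitrary, (\ref{C.3}) follows.

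The main obstacle is really just securing the uniform-in-$\nu$ tail bound, and this is exactly what the radial decreasing rearrangement buys us: without it, the $u_\nu$ could leak mass to spatial infinity and one would be forced back into a full concentration-compactness argument. Since the symmetrization step (Lemma \ref{la4.1}) has already placed the sequence in $H^1_r(M)$, the radial decay lemmas of \S{\ref{sa4}} apply directly and the proof reduces to the simple three-term splitting above.
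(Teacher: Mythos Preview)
Your argument is correct and follows essentially the same route as the paper: Rellich compactness on bounded balls $B_R(o)$ together with the uniform radial tail estimate from Lemma~\ref{la4.2} (applied with $q=p+1>2$). The paper's proof is slightly more terse in concluding (\ref{C.3}) from the tail bound and (\ref{C.5}), whereas you spell out the three-term triangle inequality and the tail control on the limit $u$, but the content is the same.
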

\begin{proof} As long as $p+1<2n/(n-2)$ (which is a weaker requirement than
(\ref{C.0})), Rellich's theorem gives
\ben
H^1(B_R(o))\hookrightarrow L^{p+1}(B_R(o))\ \text{ compact,}
\label{C.4}
\een
for each $R<\infty$, where $B_R(o)$ is the ball of radius $R$ centered at
$o$.  Hence we have
\ben
u_\nu\longrightarrow u\ \text{ in }\ L^{p+1}(B_R(o))\text{-norm},\quad
\forall\, R<\infty.
\label{C.5}
\een
To proceed, denote by $H^1_r(M)$ the space of radially symmetric functions
in $H^1(M)$.  Set
\ben
M_R=M\setminus B_R(o),\quad J_Rv=v\Bigr|_{M_R}.
\label{C.6}
\een
The following complement to (\ref{C.5})
follows from Lemma \ref{la4.2}.
Namely, given $q>2$, we have, for $v\in H^1_r(M)$,
\ben
\|J_Rv\|_{L^q}\le \delta_q(R)\|v\|_{H^1_r},\quad \delta_q(R)\rightarrow 0\
\text{ as }\ R\rightarrow\infty.
\label{C.7}
\een
Consequently, we have for the radial sequence $(u_\nu)$ that, for
each $\ep>0$, there exists $R<\infty$ such that
\ben
\int\limits_{M_R}|u_\nu|^{p+1}\, dV\le\ep,\quad \forall\,\nu,
\label{C.8}
\een
and then (\ref{C.3}) follows from (\ref{C.5}), proving Proposition \ref{pc.2}.
\end{proof}

$\text{}$

From (\ref{C.2}) we have
$\|\nabla u\|_{L^2}\le \liminf \|\nabla u_\nu\|_{L^2}$, and this together with
(\ref{C.3}) gives
\ben
E(u)\le \liminf\limits_{\nu\rightarrow\infty}\, E(u_\nu)=\Cal{I}_\beta.
\label{C.9}
\een
We'd like to know that
\ben
Q(u)=\beta.
\label{C.10}
\een
However, Lemma \ref{la4.2} requires $q>2$, so it is not clear how to establish
(\ref{C.10}) directly.

Consequently, even in the current setting, $M=\Rr^n$ or $\Cal{H}^n$, the
energy-minimizer existence result seems to need the concentration-compactness
argument given in \S{\ref{s3}} (which at present requires negative energy).

\appendix

\section{ }
As mentioned in the Introduction, we have four appendices.
Appendix \ref{sa1} presents the concentration-compactness method
of P.-L.~Lions, in the setting of a class of measured metric spaces.
Appendix \ref{sa3}
discusses the energy of ground states on Euclidean space.
Such solutions are seen to have negative energy, and be energy minimizing,
when $1<p<1+4/n$, but not when $1+4/n<p<(n+2)/(n-2)$.
Appendix \ref{sa2} discusses cases when $F_\lambda$-minimizers can have
positive energy, even for $1<p<1+4/n$, in noneuclidean settings.

Appendix \ref{sa6} exhibits some positive solutions to
(\ref{E:SNLS}) that are not $F_\lambda$-minimizers, and cases where there are
two geometrically inequivalent, positive solutions to this equation.

\subsection{The concentration-vanishing-splitting trichotomy of Lions in a
  general setting}\label{sa1}

In this section we show the concentration-vanishing-splitting
trichotomy of Lions \cite{L1,L2} can be extended in a natural fashion
to a metric space setting.  

Let $X$ be a metric space and $\{ \mu_k \}$ a sequence of Borel
probability measures on $X$. 
For $R \in (0, \infty),\ y\in X$, set
$B_R(y) = \{ x \in X : d(x,y) \leq R \}$ to be the closed ball of
radius $R$, centered at $y$.  Define
\ben
Q_k(R) = \sup\limits_{y \in X}\,  \mu_k(B_R(y)).
\een
Of course each $Q_k$ is a monotone increasing function of $R$ on
$[0,\infty)$, and
\ben
\lim_{R \to \infty} Q_k(R) = 1.
\een
Using a standard
diagonalization procedure, we can reduce to a subsequence (which we
still denote by $\{ \mu_k \}$) such that $Q_k \to Q$ on $\mathbb{Q}^+$.
The function $Q$ is monotone increasing, so set
\ben
\alpha = \lim_{R \to \infty} Q(R) \in [0,1].
\een
We examine separately the three cases $\alpha = 0$, $\alpha = 1$, and
$0 < \alpha < 1$.  We will see that these three cases lead to the
phenomena of vanishing, concentration, or splitting respectively.
(Observe that Lions labels the third case ``dichotomy'' rather than
splitting.)

$\text{}$ \newline
{\bf Case I:}  $\alpha = 0$.  In this case,
\ben
\lim_{k \to \infty} \sup_{y \in X} \mu_k (B_R(y) ) = 0, \quad \forall
R < \infty.
\een
This is precisely the case of {\it vanishing}.

$\text{}$\newline
{\bf Case II:}  $\alpha = 1$.  In this case, for each $\mu \in (0,1)$,
there exists $R = R(\mu)$ such that, for every $k$, $Q_k(R(\mu)) >
\mu$.  That means there exist points $y_k(\mu) \in X$ such that
\ben
\mu_k(B_{R(\mu)}(y_k(\mu))) > \mu.
\een
Set $y_k = y_k(1/2)$, and observe that
\ben
\mu \geq 1/2 \implies d(y_k( \mu ), y_k ) \leq R(1/2) + R(\mu).
\een
This follows by definition, since otherwise there would be two
disjoint balls in $M$, each with $\mu_k$-measure exceeding $1/2$, which
contradicts $\mu_k$ being a probability measure.

As a consequence, with $y_k = y_k(1/2)$ as above, $\mu \in (1/2,1)$,
and 
\ben
\tR(\mu) = R(1/2) + 2 R(\mu),
\een
we have for all $k$
\ben
\mu_k( B_{\tR(\mu)}(y_k) ) > \mu.
\een
As this holds for each $\mu \in (1/2,1)$, this is the phenomenon of
{\it concentration.}

$\text{}$\newline
{\bf Case III:}  $0 < \alpha < 1$.  Pick $\epsilon>0$.  Then choose $R
\in (0,\infty)$ such that $Q(R) > \alpha - \epsilon.$  There exists
$k_0$ such that for each $k \geq k_0$, 
\begin{equation}
\label{E:Qk-limit}
\alpha - \epsilon < Q_k (R) < \alpha + \epsilon.
\end{equation}
We can also choose a sequence $R_k \to \infty$ such that
\ben
Q_k (R_k) \leq \alpha + \epsilon.
\een
By \eqref{E:Qk-limit}, there exist points $y_k \in X$ such that
\begin{equation}
\label{E:muk-BR}
\mu_k( B_R(y_k)) \in (\alpha -\epsilon, \alpha + \epsilon).
\end{equation}
Set
\ben
E_k^\sharp = B_R (y_k), \quad E_k^b = X \setminus B_{R_k} (y_k).
\een
Then
\ben
\dist (E_k^\sharp, E_k^b ) \geq R_k - R,
\een
and
\ben
\aligned
\mu_k(X) - \mu_k(E_k^\sharp) - \mu_k( E_k^b) & = \mu_k ( B_{R_k} (y_k)
\setminus B_R (y_k) )  \\
& \leq \alpha + \epsilon - (\alpha - \epsilon)  \\
& = 2 \epsilon. 
\endaligned
\label{E:muk-X}
\een
This is the phenomenon of {\it splitting}.  Observe that
\eqref{E:muk-BR} and \eqref{E:muk-X} imply
\ben
| \mu_k ( E^\sharp_k ) - \alpha | < \epsilon, \quad | \mu_k(E_k^b) -
(1 - \alpha ) | < 3 \epsilon.
\een

\subsection{Energy of ground states on Euclidean space}\label{sa3}

Assume $p\in (1,(n+2)/(n-2))$, and let $u_1>0$ satisfy
\ben
-\Delta u_1+u_1=u_1^p,\quad u_1\in H^1(\Rr^n).
\label{A.3.1}
\een
It follows from \cite{Kwo} and \cite{Mc} that such $u_1$ is unique, up to
a translation; it is radial and exponentially decreasing at infinity.
Such a solution is obtained as an $F_\lambda$-minimizer, with $\lambda=1$.
If we take $\lambda>0$ and set
\ben
u_\lambda(x)=\sigma^{2/(p-1)}u_1(\sigma x),\quad \lambda=\sigma^2,
\label{A.3.2}
\een
a calculation gives
\ben
-\Delta u_\lambda+\lambda u_\lambda=u_\lambda^p,
\label{A.3.3}
\een
Again, by the results cited above, $u_\lambda$ is the unique positive solution
in $H^1(\Rr^n)$ to such an equation, up to translation.  Calculations give

\ben
\aligned
\|u_\lambda\|^2_{L^2}
&=\lambda^{2/(p-1)-n/2} \|u_1\|^2_{L^2}, \\
\|\nabla u_\lambda\|^2_{L^2}
&=\lambda^{1+2/(p-1)-n/2} \|\nabla u_1\|^2_{L^2}, \\
\int|u_\lambda|^{p+1}\,dx
&=\lambda^{1+2/(p-1)-n/2} \int |u_1|^{p+1}\, dx,
\endaligned
\label{A.3.4}
\een
hence
\ben
E(u_\lambda)=\lambda^{1+2/(p-1)-n/2}E(u_1),
\label{A.3.5}
\een
while
\ben
Q(u_\lambda)=\lambda^{2/(p-1)-n/2}Q(u_1).
\label{A.3.6}
\een
Note that
\ben
\aligned
1+\frac{2}{p-1}-\frac{n}{2}>0&\Longleftrightarrow p<\frac{n+2}{n-2}, \\
\frac{2}{p-1}-\frac{n}{2}>0&\Longleftrightarrow p<1+\frac{4}{n}.
\endaligned
\label{A.3.7}
\een
Given $p>1$, the second restriction on $p$ in (\ref{A.3.7}) is equivalent to
(\ref{E:pp}).
If we set
\ben
e(\lambda)=E(u_\lambda),\quad q(\lambda)=\frac{1}{2}\|u_\lambda\|^2_{L^2},
\label{A.3.8}
\een
we get
\ben
e(\lambda)=\lambda^{1+\gamma}e(1),\quad q(\lambda)=\lambda^\gamma q(1),
\quad \gamma=\frac{2}{p-1}-\frac{n}{2}.
\label{A.3.9}
\een

The following general result can be combined with (\ref{A.3.9}) to
provide further information on $e(\lambda)$ and $q(\lambda)$.
(Further consequences are discussed in \cite{CMMT}.)

\begin{proposition}\label{pa.3.1}
Let $M$ be a complete Riemannian manifold and $u_\lambda$ a smooth family
of positive functions in $H^1(M)$, satisfying (\ref{A.3.3}).  Define
$q(\lambda)$ and $e(\lambda)$ by (\ref{A.3.8}).  Then
\ben
\frac{de}{d\lambda}=-\lambda \frac{dq}{d\lambda}.
\label{A.3.10}
\een
\end{proposition}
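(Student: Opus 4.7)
The plan is to prove the identity by direct differentiation, using the PDE~\eqref{A.3.3} to convert terms involving $\nabla \dot u$ and $u_\lambda^p \dot u$ into a clean expression. Write $\dot u_\lambda := \partial_\lambda u_\lambda$, which by hypothesis of smooth dependence is a well-defined element of $H^1(M)$ for each $\lambda$. I would first differentiate
\[
e(\lambda) = \tfrac{1}{2}\|\nabla u_\lambda\|_{L^2}^2 - \tfrac{1}{p+1}\|u_\lambda\|_{L^{p+1}}^{p+1}
\]
under the integral sign to obtain
\[
e'(\lambda) = \Re(\nabla u_\lambda, \nabla \dot u_\lambda) - \Re(u_\lambda^p, \dot u_\lambda).
\]
Similarly, $q'(\lambda) = \Re(u_\lambda, \dot u_\lambda)$.

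Next, I would integrate by parts in the first term, which is legitimate because $u_\lambda$ is smooth with good decay at infinity (guaranteed by the smoothness and decay estimates of \S\ref{s2.3}--\S\ref{s2.5}, or, for $M=\Rr^n$, the explicit exponential decay of $u_1$ combined with the scaling \eqref{A.3.2}). This gives
\[
\Re(\nabla u_\lambda, \nabla \dot u_\lambda) = \Re(-\Delta u_\lambda, \dot u_\lambda).
\]
Substituting the PDE $-\Delta u_\lambda = u_\lambda^p - \lambda u_\lambda$ yields
\[
e'(\lambda) = \Re(u_\lambda^p - \lambda u_\lambda, \dot u_\lambda) - \Re(u_\lambda^p, \dot u_\lambda) = -\lambda \Re(u_\lambda, \dot u_\lambda) = -\lambda\, q'(\lambda),
\]
which is exactly~\eqref{A.3.10}.

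The only delicate point — the ``main obstacle'' — is justifying differentiation under the integral sign and the integration by parts uniformly in the $L^{p+1}$ and gradient terms. For this, one needs the smooth family $u_\lambda$ to satisfy enough joint regularity/decay so that the pairings $(\nabla u_\lambda, \nabla \dot u_\lambda)$, $(u_\lambda^p, \dot u_\lambda)$, and $(u_\lambda, \dot u_\lambda)$ make sense and define $C^1$ functions of $\lambda$. Under the stated hypothesis that $u_\lambda$ is a smooth family of positive $H^1$ solutions on a complete Riemannian manifold, $\dot u_\lambda \in H^1(M)$ suffices (noting that Sobolev embedding $H^1 \hookrightarrow L^{p+1}$ in the relevant range controls the nonlinear term), so the calculation above is rigorous. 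No further geometric hypothesis on $M$ is required — the identity is structural, coming from the variational nature of the equation.
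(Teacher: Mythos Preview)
Your proof is correct and follows essentially the same approach as the paper: differentiate $e(\lambda)$ and $q(\lambda)$ under the integral sign, integrate by parts to replace $(\nabla u_\lambda,\nabla\dot u_\lambda)$ by $(-\Delta u_\lambda,\dot u_\lambda)$, and use the PDE to cancel terms. The paper organizes the algebra slightly differently---pairing the equation itself with $\partial_\lambda u_\lambda$ first and then comparing with the derivative formulas---but the content is identical.
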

\begin{proof}
Take the inner product of (\ref{A.3.3}) with $\pa_\lambda u_\lambda$, to get
\ben
(\Delta u_\lambda-\lambda u_\lambda+u^p_\lambda,\pa_\lambda u_\lambda)=0.
\label{A.3.11}
\een
Note that
\ben
\frac{dq}{d\lambda}
=\frac{1}{2}\, \frac{\pa}{\pa\lambda}\|u_\lambda\|^2_{L^2}
=(u_\lambda,\pa_\lambda u_\lambda).
\label{A.3.12}
\een
Also,
\ben
\aligned
\frac{1}{2}\, \frac{\pa}{\pa\lambda}\|\nabla u_\lambda\|^2_{L^2}
&=(\nabla u_\lambda,\nabla\pa_\lambda u_\lambda) \\
&=-(\Delta u_\lambda,\pa_\lambda u_\lambda),
\endaligned
\label{A.3.13}
\een
and
\ben
\frac{1}{p+1}\frac{\pa}{\pa\lambda}\int_M u_\lambda^{p+1}\, dV
=\int_M u_\lambda^p\, \pa_\lambda u_\lambda\, dV,
\label{A.3.14}
\een
so
\ben
\frac{de_\lambda}{d\lambda}=
-(\Delta u_\lambda+u^p_\lambda,\pa_\lambda u_\lambda).
\label{A.3.15}
\een
Comparing (\ref{A.3.12}) and (\ref{A.3.15}) gives (\ref{A.3.10}),
via (\ref{A.3.11}).
\end{proof}

From this, we can deduce information about the sign of
$e(1)$, hence of $e(\lambda)$ for all $\lambda>0$.  By (\ref{A.3.9}) and
(\ref{A.3.7}),
\ben
\aligned
1<p<1+\frac{4}{n}&\Longrightarrow q'(\lambda)>0,\quad \forall\, \lambda>0 \\
&\Longrightarrow e'(\lambda)<0 \\
&\Longrightarrow e(1)<0 \\
&\Longrightarrow E(u_\lambda)<0,\quad \forall\, \lambda>0.
\endaligned
\label{A.3.16}
\een
On the other hand,
\ben
\aligned
1+\frac{4}{n}<p<\frac{n+2}{n-2}
&\Longrightarrow q'(\lambda)<0,\quad \forall\, \lambda>0 \\
&\Longrightarrow e'(\lambda)>0 \\
&\Longrightarrow e(1)>0 \\
&\Longrightarrow E(u_\lambda)>0,\quad \forall\, \lambda>0.
\endaligned
\label{A.3.17}
\een
(In the setting of (\ref{A.3.17}), (\ref{E:pp}) is violated,
and results of \S{\ref{s3}} do
not apply.)
From these observations, we can obtain positive and negative results about
energy minimizers.  Here is a positive result.

\begin{proposition}\label{pa.3.2}
If $p$ satisfies (\ref{A.3.16}) and $\lambda>0$, then the positive solution
$u\in H^1(\Rr^n)$ to (\ref{A.3.3}) is energy minimizing, within its mass
class.
\end{proposition}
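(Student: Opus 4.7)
The plan is to combine the existence theory for energy minimizers from Section \ref{s3}, the classical uniqueness result of Kwong--McLeod for positive $H^1$ solutions of \eqref{A.3.3} on $\Rr^n$, and the strict monotonicity of $q(\lambda)$ provided by \eqref{A.3.16}. Fix $\lambda>0$ and set $\beta=Q(u_\lambda)=2q(\lambda)$. I would first observe that $u_\lambda\in H^1(\Rr^n)$ is a competitor in the variational problem defining $\Cal{I}_\beta$, so $\Cal{I}_\beta\le E(u_\lambda)=e(\lambda)$. Since $p$ is in the range \eqref{A.3.16}, the chain of implications in \eqref{A.3.16} yields $e(\lambda)<0$ and hence $\Cal{I}_\beta<0$. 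This is exactly the hypothesis \eqref{3.0.9A}$=$\eqref{4.0} under which the concentration--compactness argument of \S{\ref{s3.2}} produces an energy minimizer; $\Rr^n$ is a (homogeneous, hence) weakly homogeneous space, so Proposition \ref{p3.2.1}-style arguments from \S{\ref{s3.2}} apply and yield a minimizer $v\in H^1(\Rr^n)$ with $Q(v)=\beta$ and $E(v)=\Cal{I}_\beta$.

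Next, I would invoke the structural results from \S{\ref{s3.4}}: every energy minimizer is, up to an overall phase $e^{i\theta}$, a strictly positive smooth function, so we may assume $v>0$ on $\Rr^n$. Moreover $v$ satisfies
\ben
-\Delta v+\mu v-v^p=0
\een
for some Lagrange multiplier $\mu\in\Rr$. Proposition \ref{p3.6.1} gives $\Spec(-\Delta+\mu)\subset [0,\infty)$, and since $\Spec(-\Delta)=[0,\infty)$ on $\Rr^n$, we get $\mu\ge 0$. The case $\mu=0$ is ruled out by the Pohozaev identity (no positive $H^1(\Rr^n)$ solution of $-\Delta v=v^p$ exists for subcritical $p$), so $\mu>0$.

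Now I would apply the Kwong--McLeod uniqueness result cited in \S{\ref{s1}} (via \cite{Kwo}, \cite{Mc}, \cite{CLJ}): the only positive $H^1(\Rr^n)$ solution of $-\Delta v+\mu v=v^p$ is $u_\mu$, up to translation. Thus $v(x)=u_\mu(x-x_0)$ for some $x_0\in\Rr^n$. Translation invariance of $Q$ and $E$ on $\Rr^n$ gives $Q(v)=Q(u_\mu)=2q(\mu)$ and $E(v)=e(\mu)$. The mass constraint $Q(v)=\beta=2q(\lambda)$ then reads $q(\mu)=q(\lambda)$. But by \eqref{A.3.16}, $q$ is strictly increasing on $(0,\infty)$ when $1<p<1+4/n$, so $\mu=\lambda$. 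Consequently $E(u_\lambda)=e(\lambda)=e(\mu)=E(v)=\Cal{I}_\beta$, which is the desired conclusion.

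The main obstacle is conceptual rather than technical: one must be sure that the minimizer produced by the abstract concentration--compactness machinery is rigid enough to be matched with $u_\lambda$. This matching rests entirely on the Kwong--McLeod uniqueness theorem (which is highly nontrivial but available in the literature) together with the monotonicity of $q$; without the restriction $p<1+4/n$, the sign of $q'(\lambda)$ flips, the matching fails, and in fact Proposition \ref{pa.3.2} becomes false, as noted in \eqref{A.3.17}.
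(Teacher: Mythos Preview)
Your proof is correct and follows essentially the same route as the paper: produce an energy minimizer $v$ at mass $\beta=Q(u_\lambda)$ using \S{\ref{s3.2}} (with $\Cal{I}_\beta<0$ ensured by \eqref{A.3.16}), show it is positive and solves \eqref{A.3.3} with some $\mu\ge 0$, rule out $\mu=0$, and then identify $v$ with $u_\lambda$ via Kwong--McLeod uniqueness and the injectivity of $\lambda\mapsto q(\lambda)$.

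The one substantive difference is how you exclude $\mu=0$. You invoke the Pohozaev identity to assert that $-\Delta v=v^p$ has no positive $H^1(\Rr^n)$ solution for subcritical $p$; this is correct but imports an outside ingredient. The paper instead pairs the equation with $v$ to get $\|\nabla v\|_{L^2}^2=\|v\|_{L^{p+1}}^{p+1}$, which forces $E(v)=\bigl(\tfrac12-\tfrac{1}{p+1}\bigr)\|v\|_{L^{p+1}}^{p+1}>0$, contradicting $E(v)=\Cal{I}_\beta<0$. This argument is more self-contained and uses only quantities already in play. A minor citation point: the existence result you want from \S{\ref{s3.2}} is the one for noncompact weakly homogeneous spaces under hypothesis \eqref{4.0}, not Proposition~\ref{p3.2.1}, which is the compact case.
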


\begin{proof}
Say $\|u_\lambda\|_{L^2}^2=\beta(\lambda)=2q(\lambda)$. Results of
\S{\ref{s3}} imply there exists a minimizer $v_\lambda$ for $E(v)$,
subject to the constraint $\|v\|_{L^2}^2=\beta(\lambda)$. We can assume
$v_\lambda>0$.  Furthermore, there exists $\mu\in\Rr$ such that
\ben
-\Delta v_\lambda+\mu v_\lambda=v_\lambda^p.
\label{A.3.18}
\een
We know that $\Spec(-\Delta+\mu)\subset[0,\infty)$, so, in this setting,
$\mu\geq 0$.  There are two possibilities: $\mu>0$ or $\mu=0$.
If $\mu>0$, the uniqueness result of \cite{Kwo}, \cite{Mc} implies $v_\lambda
=u_\mu$, up to a translation.  Now $\|v_\lambda\|_{L^2}^2=\|u_\mu\|_{L^2}^2
=\|u_\lambda\|_{L^2}^2$ implies $\mu=\lambda$, by (\ref{A.3.4}).  To finish,
we claim that $\mu=0$ is impossible.  Indeed, if this held, we could take the
inner product of (\ref{A.3.18}) with $v_\lambda$ to get
\ben
\|\nabla v_\lambda\|_{L^2}^2=\|v_\lambda\|_{L^{p+1}}^{p+1},
\label{A.3.19}
\een
hence
\ben
\aligned
E(v_\lambda)
&=\frac{1}{2}\|\nabla v_\lambda\|_{L^2}^2
-\frac{1}{p+1}\|v_\lambda\|_{L^{p+1}}^{p+1} \\
&=\Bigl(\frac{1}{2}-\frac{1}{p+1}\Bigr) \|v_\lambda\|_{L^{p+1}}^{p+1} \\
&>0.
\endaligned
\label{A.3.20}
\een
However, the minimum energy in this situation is $<0$, by (\ref{A.3.16}),
so the energy minimizer cannot satisfy (\ref{A.3.18}) with $\mu=0$.
\end{proof}

From this, we have the following existence and uniqueness result.

\begin{proposition}\label{pa.3.3}
If $p$ satisfies (\ref{A.3.16}) and $\beta>0$, there is a positive $u\in
H^1(\Rr^n)$ that minimizes $E(u)$ subject to the constraint $\|u\|_{L^2}^2
=\beta$.  This function solves (\ref{A.3.3}), for some $\lambda>0$, uniquely
determined by $\beta$, and it is unique up to translations.
\end{proposition}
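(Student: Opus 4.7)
The plan is to apply the existence construction of \S\ref{s3.2} once we check the negative--energy condition $\Cal{I}_\beta<0$, and then to identify the resulting minimizer with a translate of the known positive solution $u_\lambda$ from (\ref{A.3.3}) by combining the Lagrange-multiplier analysis of \S{\ref{s3.5}}--\S{\ref{s3.6}} with the Kwong--McLeod uniqueness theorem.  Since $p\in(1,1+4/n)$, the exponent $\gamma=2/(p-1)-n/2$ is strictly positive, so (\ref{A.3.6}) shows that $\lambda\mapsto Q(u_\lambda)=\lambda^\gamma Q(u_1)$ is a continuous, strictly increasing bijection of $(0,\infty)$ onto itself.  Hence there is a unique $\lambda=\lambda(\beta)>0$ with $Q(u_\lambda)=\beta$.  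By (\ref{A.3.16}) we have $E(u_\lambda)<0$, so $\Cal{I}_\beta\le E(u_\lambda)<0$.  The construction of \S\ref{s3.2}, applied to the weakly homogeneous space $\Rr^n$, then produces $v\in H^1(\Rr^n)$ with $Q(v)=\beta$ and $E(v)=\Cal{I}_\beta$; by Proposition \ref{p3.4.1} we may take $v>0$, whence $v\in\Ci(\Rr^n)$.  The Lagrange-multiplier identity (\ref{3.5.35}) supplies $\mu\in\Rr$ with $-\Delta v+\mu v=v^p$, and Proposition \ref{p3.6.1} together with $\Spec(-\Delta)=[0,\infty)$ on $\Rr^n$ gives $\mu\ge 0$.

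Next I would rule out $\mu=0$ exactly as in the proof of Proposition \ref{pa.3.2}: pairing the PDE with $v$ yields $\|\nabla v\|_{L^2}^2=\|v\|_{L^{p+1}}^{p+1}$, and therefore $E(v)=\bigl(\tfrac{1}{2}-\tfrac{1}{p+1}\bigr)\|v\|_{L^{p+1}}^{p+1}>0$, contradicting $E(v)=\Cal{I}_\beta<0$.  Hence $\mu>0$, and the Kwong--McLeod uniqueness theorem (\cite{Kwo}, \cite{Mc}) forces $v$ to be a translate of $u_\mu$.  The mass constraint $\beta=\|v\|_{L^2}^2=\|u_\mu\|_{L^2}^2=\mu^\gamma Q(u_1)$, combined with the strict monotonicity of $\lambda\mapsto Q(u_\lambda)$ recorded above, pins down $\mu=\lambda(\beta)$.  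Thus the positive energy minimizer exists, solves (\ref{A.3.3}) with the uniquely determined $\lambda=\lambda(\beta)$, and is unique modulo translations.

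The only step requiring any real care is the exclusion $\mu\ne 0$: the spectral positivity of \S\ref{s3.6} only yields $\mu\ge 0$ on $\Rr^n$, and it is precisely the strict inequality $\Cal{I}_\beta<0$ -- traceable via (\ref{A.3.16}) to the sign of $e(\lambda)$ in the $L^2$-subcritical regime -- that upgrades $\mu\ge 0$ to $\mu>0$ and thereby enables Kwong--McLeod.  Everything else is an assembly of results already established earlier in the paper, plus the classical uniqueness theorem for positive decaying solutions of $-\Delta u+\mu u=u^p$ on $\Rr^n$.
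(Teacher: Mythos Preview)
Your proof is correct and follows essentially the same route as the paper: verify $\Cal{I}_\beta<0$ via (\ref{A.3.16}), invoke the existence theory of \S{\ref{s3.2}}, pass to a positive minimizer, read off the Lagrange multiplier $\mu\ge 0$, exclude $\mu=0$ by the energy computation (\ref{A.3.19})--(\ref{A.3.20}), and then use Kwong--McLeod plus the bijectivity of $\lambda\mapsto Q(u_\lambda)$ to pin down $\mu=\lambda(\beta)$.  The paper's own proof of Proposition \ref{pa.3.3} is only two sentences long because this entire argument has already been carried out in the proof of Proposition \ref{pa.3.2} (for a given $\lambda$ rather than a given $\beta$); the paper simply points back to (\ref{A.3.20}) to rule out $\lambda=0$ and to (\ref{A.3.9}) for the $\beta\leftrightarrow\lambda$ correspondence, whereas you have written out the full chain explicitly.
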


\begin{proof} As seen in (\ref{A.3.20}), we cannot have $\lambda=0$.
It only remains to remark that (\ref{A.3.9}) sets up the unique correspondence
between $\beta$ and $\lambda$.
\end{proof}

We now record some negative results, when $p$ satisfies (\ref{A.3.17}).
Here is a preliminary result, using the fact (cf.~(\ref{E:i-beta})) that
\ben
\Cal{I}_\beta\rightarrow -\infty\ \text{ as }\ \beta\rightarrow+\infty.
\label{A.3.21}
\een

\begin{lemma} \label{la.3.4}
If $p$ satisfies (\ref{A.3.17}), then, as $\lambda\searrow 0$, $u_\lambda$
is not energy minimizing within its mass class.
\end{lemma}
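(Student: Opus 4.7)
The plan is to exploit the explicit scaling formulas (\ref{A.3.5})--(\ref{A.3.6}) for $E(u_\lambda)$ and $Q(u_\lambda)$ together with the unbounded-below behavior (\ref{A.3.21}) of $\Cal{I}_\beta$ as $\beta\to\infty$. The point is that in the supercritical regime (\ref{A.3.17}), the mass of $u_\lambda$ blows up as $\lambda\searrow 0$ while its energy stays bounded (indeed goes to $0^+$), whereas the infimum energy at that mass tends to $-\infty$. The two curves cannot cross, so $u_\lambda$ is outperformed by some competitor of the same mass for every sufficiently small $\lambda>0$.

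More precisely, under hypothesis (\ref{A.3.17}) we have $\gamma=2/(p-1)-n/2<0$ and $1+\gamma>0$, by (\ref{A.3.7}). Combined with $e(1)>0$ from (\ref{A.3.17}), the identities (\ref{A.3.5})--(\ref{A.3.6}) give
\[
Q(u_\lambda)=\lambda^{\gamma}Q(u_1)\longrightarrow +\infty,\qquad
E(u_\lambda)=\lambda^{1+\gamma}E(u_1)\longrightarrow 0^{+},
\]
as $\lambda\searrow 0$. Setting $\beta(\lambda)=Q(u_\lambda)$, the invocation of (\ref{A.3.21}) yields $\Cal{I}_{\beta(\lambda)}\to -\infty$ as $\lambda\searrow 0$. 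Hence for all $\lambda>0$ sufficiently small,
\[
\Cal{I}_{\beta(\lambda)}<0<E(u_\lambda),
\]
which means there is some $v\in H^1(\Rr^n)$ with $Q(v)=Q(u_\lambda)$ and $E(v)<E(u_\lambda)$, i.e.\ $u_\lambda$ is not an energy minimizer within its mass class.

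There is essentially no obstacle; the proof is a one-line numerical comparison once the two scaling laws and (\ref{A.3.21}) are in hand. The only mild subtlety is that (\ref{A.3.21}) was stated in \S{\ref{s3}} for weakly homogeneous spaces and here we are invoking it on $\Rr^n$, which is of course weakly homogeneous; one may simply note that the elementary scaling $u\mapsto au$ with $a\nearrow +\infty$ used to establish (\ref{E:i-beta}) applies verbatim on $\Rr^n$, and in fact produces $\Cal{I}_\beta\to -\infty$ at any rate one needs. No appeal to the existence of an actual minimizer at mass $\beta(\lambda)$ is required — only the strict inequality $\Cal{I}_{\beta(\lambda)}<E(u_\lambda)$, which follows from $\Cal{I}_{\beta(\lambda)}\to-\infty$ together with $E(u_\lambda)\to 0^+$.
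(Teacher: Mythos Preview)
Your proof is correct and follows essentially the same approach as the paper: both exploit that under (\ref{A.3.17}) the mass $Q(u_\lambda)=\lambda^\gamma Q(u_1)\to+\infty$ as $\lambda\searrow 0$, invoke (\ref{A.3.21}) to send the infimum energy at that mass to $-\infty$, and contrast this with the positivity of $E(u_\lambda)$. The paper's version is slightly terser, using only $E(u_\lambda)>0$ rather than your sharper observation $E(u_\lambda)\to 0^+$, but the argument is the same.
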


\begin{proof} If (\ref{A.3.17}) holds, as $\lambda\searrow 0$, $\|u_\lambda
\|_{L^2}\nearrow +\infty$, so
\ben
\inf\, \{E(u):\|u\|_{L^2}=\|u_\lambda\|_{L^2}\}\rightarrow -\infty,
\label{A.3.27}
\een
by (\ref{A.3.21}).  Thus, by (\ref{A.3.17}), $E(u_\lambda)$ is not minimal.
\end{proof}

We can extend this result, as follows.
Given $\varphi_1\in H^1(\Rr^n)$, set
\ben
\varphi_\lambda(x)=\sigma^{2/(p-1)}\varphi_1(\sigma x),
\quad \lambda=\sigma^2,
\label{A.3.28}
\een
for $\lambda\in (0,\infty)$.  As in (\ref{A.3.4})--(\ref{A.3.6}),
\ben
\|\varphi_\lambda\|^2_{L^2}=\lambda^\gamma\|\varphi_1\|^2_{L^2},\quad
E(\varphi_\lambda)=\lambda^{1+\gamma}E(\varphi_1),
\label{A.3.29}
\een
with $\gamma$ given in (\ref{A.3.9}).  Given $\mu>0$ small enough, pick
$\varphi_1\in H^1(\Rr^n)$ such that (with $\varphi_\mu$ as in (\ref{A.3.29})
and $u_\mu$ as in (\ref{A.3.2}), where $\mu$ replaces $\lambda$)
\ben
\|\varphi_\mu\|_{L^2}=\|u_\mu\|_{L^2},\quad E(\varphi_\mu)<0.
\label{A.3.30}
\een
Then
\ben
\|\varphi_\lambda\|_{L^2}=\|u_\lambda\|_{L^2},
\quad E(\varphi_\lambda)<0,\quad
\forall\, \lambda\in (0,\infty).
\label{A.3.30A}
\een
This shows that, even for $p$ as in (\ref{A.3.17}),
\ben
\Cal{I}_\beta<0,\quad \forall\, \beta>0.
\label{A.3.31}
\een
Hence, for each $\lambda>0$, the positive solution $u_\lambda\in H^1(\Rr^n)$
to (\ref{A.3.3}) is not energy minimizing within its mass class.  Here is
a stronger result.

\begin{proposition} \label{pa.3.5}
If $p$ satisfies (\ref{A.3.17}), there is no energy minimizer $u\in
H^1(\Rr^n)$ within its mass class, at any positive mass.
\end{proposition}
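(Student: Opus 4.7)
The plan is to argue by contradiction, combining the spectral constraint from Proposition \ref{p3.6.1}, the Kwong--McLeod uniqueness theorem, and the two parts of (\ref{A.3.17})--(\ref{A.3.31}) that push in opposite directions: any positive solution of (\ref{A.3.3}) has \emph{strictly positive} energy in this $p$ range, yet $\Cal{I}_\beta<0$ for every $\beta>0$.

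First, assume for contradiction that $u\in H^1(\Rr^n)$ is an energy minimizer within its mass class $Q(u)=\beta>0$. Since $\Rr^n$ is weakly homogeneous, the discussion in \S{\ref{s3.4}} (replacing $u$ by $|u|$ and invoking Harnack) allows us to assume $u>0$ and smooth, and there exists $\lambda\in\Rr$ with $-\Delta u+\lambda u=u^p$. Proposition \ref{p3.6.1} then forces $\Spec(-\Delta+\lambda)\subset[0,\infty)$, and since $\Spec(-\Delta)=[0,\infty)$ on $\Rr^n$, this gives $\lambda\ge 0$. On the other hand, (\ref{A.3.31}) (established in the excerpt via the scaling $\varphi_\lambda$) tells us $E(u)=\Cal{I}_\beta<0$.

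Next, I would split into the two cases $\lambda>0$ and $\lambda=0$. In the case $\lambda>0$, the Kwong--McLeod uniqueness theorem (\cite{Kwo}, \cite{Mc}) shows $u$ coincides, up to translation, with the function $u_\lambda$ in (\ref{A.3.2}). But (\ref{A.3.17}) says $E(u_\lambda)>0$ in the regime $1+4/n<p<(n+2)/(n-2)$, contradicting $E(u)<0$. In the case $\lambda=0$, pair the equation $-\Delta u=u^p$ with $u$ to obtain $\|\nabla u\|_{L^2}^2=\|u\|_{L^{p+1}}^{p+1}$, so
\[
E(u)=\Bigl(\tfrac{1}{2}-\tfrac{1}{p+1}\Bigr)\|u\|_{L^{p+1}}^{p+1}>0,
\]
since $p>1$ and $u\not\equiv 0$. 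This again contradicts $E(u)=\Cal{I}_\beta<0$.

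The main obstacle, conceptually, is already handled in the excerpt: namely, verifying that $\Cal{I}_\beta<0$ for \emph{every} $\beta>0$ even in this supercritical-for-mass regime. That is what (\ref{A.3.30A})--(\ref{A.3.31}) do, using the fact that when $\gamma=2/(p-1)-n/2<0$ the scaling $\varphi\mapsto\varphi_\lambda$ sweeps out all positive $L^2$ masses, together with Lemma \ref{la.3.4} (nonminimality of $u_\lambda$ for small $\lambda$) to produce a comparison function of negative energy in each mass class. With that input in hand, the contradiction in both cases is immediate, and no further compactness or variational argument is needed.
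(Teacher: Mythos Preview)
Your proof is essentially identical to the paper's: both argue by contradiction, invoke (\ref{A.3.31}) to force $E(u)<0$, obtain $\lambda\ge 0$, and then derive a contradiction via Kwong--McLeod uniqueness together with (\ref{A.3.17}) when $\lambda>0$, and via the direct computation (exactly (\ref{A.3.20})) when $\lambda=0$. You are simply more explicit than the paper about citing Proposition~\ref{p3.6.1} for the spectral constraint and \cite{Kwo}, \cite{Mc} for the identification $u=u_\lambda$.
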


\begin{proof}
Without loss of generality, such a minimizer can be taken to be $\geq 0$.
By (\ref{A.3.31}), such a minimizer must have negative energy.  It also must
solve (\ref{A.3.3}) for some $\lambda\geq 0$.  If $\lambda>0$, we contradict
the conclusion of (\ref{A.3.17}), and if $\lambda=0$ we contradict
(\ref{A.3.20}).
\end{proof}

\subsection{Ground states with positive energy}\label{sa2}

If $M$ is a weakly homogeneous space of dimension $n$, and
\ben
\Spec (-\Delta)\subset[\delta_0,\infty),\quad \lambda>-\delta_0,\quad
1<p<\frac{n+2}{n-2},
\label{A.1}
\een
then, as shown in \S{\ref{s2}},
one can minimize $F_\lambda(u)=\|\nabla u\|_{L^2}^2+\lambda
\|u\|_{L^2}^2$, subject to $\|u\|_{L^{p+1}}$ being fixed, and multiply by
a constant to get a positive solution to 
\ben
\Delta u_\lambda-\lambda u_\lambda+|u_\lambda|^{p-1}u_\lambda=0.
\label{A.2}
\een
As seen in \S{\ref{sa3}}, when $M=\Rr^n$ (where $\delta_0=0$),
these ground states all have negative energy when $1<p<1+4/n$,
and positive energy when $1+4/n<p<(n+2)/(n-2)$.
Here we note that some of these ``ground state'' solutions can have
positive energy, whenever (\ref{A.1}) holds with $\delta_0>0$,
even when $1<p<1+4/n$.

In fact, (\ref{A.2}) implies
\ben
\|\nabla u_\lambda\|^2_{L^2}=-\lambda\|u_\lambda\|^2_{L^2}
+\int\limits_M |u_\lambda|^{p+1}\, dV,
\label{A.3}
\een
which in turn implies
\ben
\aligned
E(u_\lambda)&=\frac{1}{2}\|\nabla u_\lambda\|^2_{L^2}
-\frac{1}{p+1}\int\limits_M |u_\lambda|^{p+1}\, dV \\
&=-\frac{\lambda}{2}\|u_\lambda\|^2_{L^2}
+\frac{p-1}{2(p+1)}\int\limits_M |u_\lambda|^{p+1}\, dV.
\endaligned
\label{A.4}
\een
If (\ref{A.1}) holds with $\delta_0>0$, we can pick $\lambda\in (-\delta_0,0]$
and find a ground state solution to (\ref{A.2})
(i.e., an $F_\lambda$-minimizer),
and then (\ref{A.4}) gives
\ben
\lambda\le 0\Longrightarrow E(u_\lambda)>0.
\label{A.5}
\een

For example, all the ground states on hyperbolic space $\Cal{H}^n$ associated
to $\lambda\le 0$ have positive energy.  It would be interesting to investigate
when they are energy minimizing, within their mass class.

\subsection{Non-$F_\lambda$-minimizers and related non-uniqueness}\label{sa6}

Fix $\lambda>0,\ \beta>0,\ n\ge 2,\ p\in (1,(n+2)/(n-2))$, and $R>0$, and set
\ben
\aligned
M&=\{x\in \Rr^n:|x|\geq R\},  \\
I_\beta&=\inf\,\{F_\lambda(u):u\in H^1_0(M),\, J_p(u)=\beta\}. 
\endaligned
\label{A6.2}
\een
As seen in \S{\ref{s2.a}}, there is no minimizer in such a case.  In fact,
$I_\beta$, given by (\ref{A6.2}), is equal to
$$
\inf\, \{F_\lambda(u):u\in H^1(\Rr^n),\, J_p(u)=\beta\},
$$
and $F_\lambda(u)>I_\beta$ for all $u\in H^1_0(M)$.  On the other hand,
methods of \S{\ref{sa4}} readily work to produce a minimizer for $F_\lambda$
restricted to the space $H^1_{0,r}(M)$ of {\it radial} functions in
$H^1_0(M)$, thus achieving
\ben
\aligned
&F_\lambda(v)=R_\beta,\qquad v\in H^1_{0,r}(M), \\
&R_\beta=\inf\, \{F_\lambda(u):u\in H^1_{0,r}(M),\, J_p(u)=\beta\}.
\endaligned
\label{A6.3}
\een
We can arrange that $v\ge 0$ on $M$.  Then $v$ is a radial solution to
\ben
-\Delta v+\lambda v=Kv^p,\quad K=\beta^{-1}R_\beta,\quad v\bigr|_{\pa M}=0,
\label{A6.4}
\een
and $v>0$ on the interior of $M$.  (Cf.~(\ref{1.1.4})--(\ref{E:K0})
for the computation of $K$.)  Of course, $v$ is not an $F_\lambda$-minimizer:
\ben
F_\lambda(v)>I_\beta.
\label{A6.5}
\een

We next construct some solutions on a compact, annular region.  With
$\lambda,\beta,n,p$, and $R$ as above, pick $S>R$ so large that one can take a
Euclidean $F_\lambda$-minimizer, translate it to be concentrated near a point
$p$, satisfying $|p|\sim (R+S)/2$, and cut it off near $|x|=R$ and $|x|=S$, in
such a way as to obtain
\ben
\tilde{u}\in H^1_0(\Omega),\quad J_p(\tilde{u})=\beta,\quad
F_\lambda(\tilde{u})<R_\beta,
\label{A6.6}
\een
where
\ben
\Omega=\{x\in\Rr^n:R\leq |x|\leq S\}.
\label{A6.7}
\een
Now, since $\Omega$ is compact, we can find minimizers for each of the
following:
\ben
\aligned
L_1&=\inf\, \{F_\lambda(u):u\in H^1_0(\Omega),\, J_p(u)=\beta)\}, \\
L_2&=\inf\, \{F_\lambda(u):u\in H^1_{0,r}(\Omega),\ J_p(u)=\beta\}.
\endaligned
\label{A6.8}
\een
Note that
\ben
L_1<R_\beta<L_2.
\label{A6.9}
\een
We can find
\ben
\aligned
w_1\in H^1_0(\Omega),\quad J_p(w_1)&=\beta,\quad F_\lambda(w_1)=L_1, \\
w_2\in H^1_{0,r}(\Omega),\quad J_p(w_2)&=\beta,\quad F_\lambda(w_2)=L_2,
\endaligned
\label{A6.10}
\een
and arrange that $w_j\ge 0$ on $\Omega$.
Then $w_j$ are positive solutions on $\Omega$ to
\ben
-\Delta w_j+\lambda w_j=K_j w_j^p,\quad K_j=\beta^{-1}L_j,\quad
w_j\bigr|_{\pa\Omega}=0.
\label{A6.11}
\een
Then
\ben
u_j=K_j^{1/(p-1)}w_j\in H^1_0(\Omega)
\label{A6.12}
\een
are positive solutions to
\ben
-\Delta u_j+\lambda u_j=u_j^p,\quad u_j\bigr|_{\pa\Omega}=0,
\label{A6.13}
\een
and, since (\ref{A6.9}) implies $K_1<K_2$, while $J_p(w_1)=J_p(w_2)$,
\ben
\|u_1\|_{L^{p+1}}<\|u_2\|_{L^{p+1}},
\label{A6.14}
\een
so these solutions are geometrically distinct.

$\text{}$ \newline
{\bf Remark.}  If $\lambda$ is large, the Euclidean $F_\lambda$-minimizer
is highly peaked, and $S$ need not be much larger than $R$.

\bibliographystyle{alpha}
\bibliography{MMT-bib}

\end{document}